\newtheorem{thm}{Theorem}
\newtheorem{cor}[thm]{Corollary}
\newtheorem{lem}[thm]{Lemma}
\newtheorem{prop}{Proposition}
\newtheorem{exm}[thm]{Example}
\newtheorem{defn}{Definition}
\newtheorem{rem}[thm]{Remark}
\begin{document}
\begin{center}
	{\LARGE\LARGE \bf Square root of an element in $PSL_2(\mathbb{F}_p)$, $SL_2(\mathbb{F}_p)$, $GL_2(\mathbb{F}_p)$ and $A_n$. Verbal width by set of squares in alternating group $A_n$ and Mathieu groups}

\medskip

	{\bf Skuratovskii Ruslan \textsuperscript{[0000-0002-5692-6123]} $^1$ }

\smallskip


	


\end{center}	
\bigskip

\section{Abstract }
The problems of square root from group element existing in $A_n$, $SL_2(F_p)$, $PSL_2(F_p)$ and $GL_2(F_p)$ for arbitrary prime $p$ are solved in this paper. The similar goal of root finding was reached in the GM algorithm adjoining an $n$-th root of a generator \cite{GM}
results in a discrete group
 for group $PSL(2,R)$, but we consider this question over finite field $F_p$. Well known the Cayley-Hamilton method \cite{Pell} for computing the square roots of the matrix $M^n$ can give answer of square roots existing over finite field only after computation of $det M^n$ and some real Pell-Lucas numbers by using Bine formula. Over method gives answer about existing $\sqrt{ M^n}$ without exponents $M$ to $n$-th power. We use only trace of $M$ or only eigenvalues of $M$.

In \cite{Amit} only the Anisotropic case of  group  $SL_1(Q)$,  where $Q$ is a quaternion division algebra over $k$  was considered. The authors of \cite{Amit} considered criterion to be square only for case $F_p$ is a field of characteristic not equal 2. We solve this problem even for fields $F_2$ and $F_{2^n}$. The criterion to $g \in SL_2 (F_2)$ be square in $SL_2(F_2)$ was not found by them what was declared in a separate sentence in \cite{Amit}.
Also in \cite{Amit} the split case  $G = SL_2(k)$, where under group splitting authors mean Bruhat decomposition is the double coset decomposition of the group $SL_2(k)$ with respect to the subgroup Borel $B$, consisting of upper triangualar matrix from $SL_2(k)$.

In this paper questions of the width in verbal subgroups of $A_n$ is considered.
Verbal width by set of squares of alternating group is found, it is happened equals to 2. The criterion of squareness in $A_n$ is presented.
The necessary and sufficient conditions when an element of alternating group $g A_n$ and $GL_2(F_p)$ as well  as for $SL_2(F_p)$ can be presented as a squares of one element are also found by us. Some necessary conditions to an element $g\in A_n$ being the square in $A_n$ are investigated. The criterion square root of an element existing in $PSL_2(\mathbb{F}_p)$ is found. The criterion of  existing an element square root in $PSL_2(\mathbb{F}_p)$ is found.

In this case, the verbal set turned out to be a system of generators, so we studied it and considered the diameter of the entire group.


\textbf{Key words}: verbal subgroup, verbal width, equation in matrix group, verbal length, set of squares of alternating group, criterion of  square root existing in $PSL_2(\mathbb{F}_p)$.\\
\textbf{2000 AMS subject classifications}:  20B27, 20E08, 20B22, 20B35, 20F65, 20B07.

\section{Introduction }
The characterization of verbal subgroups in a group is an interesting and
difficult problem.
In finite groups one of the earliest questions on verbal width comes from  
the Ore's paper \cite{O.Ore}, where O. Ore asked if the commutator length 
of arbitrary element in a non-abelian finite simple group is equal to 1.
Recently this hypothesis  was established \cite{Ore}.
Recently M. Liebeck, E. O'Brien, A. Shalev and Ph. Tiep \cite{Liebeck} proved that verbal width by set of squares  of a simple group is 2. We researched this question more deep for the alternating group $A_n$ and find a criterion to be square for an element of  $A_n$.
Also such criterion for $SL_2(F_p)$, $PSL_2(F_p)$ and $GL_2(F_p)$ were established.
This criterion is a stricter version of the formulated question for group extensions how large must an overgroup of a given group be in order to contain a square root of any element of the initial group $G$, which was considered in the paper of Anton A. Klyachko and D. V. Baranov \cite{Klyach}.

In any group $G$, the commutator of two elements $g$ and $h$ is a product of three
squares, namely $gh{{g}^{\text{-1}}}{{h}^{\text{-1}}}\text{ = }{{g}^{\text{2}}}{{\left( {{g}^{\text{-1}}}h \right)}^{\text{2}}}{{h}^{-2}}$.

The Verbal Width of the commutator of a free group was investigated by Lyndon and Newman \cite{Lynd} and further by Sucharit Sarkar \cite{Sar}. They proved that an arbitrary commutator of free group of rank greater than 1 cannot be generated by only 2 squares. The conditions when a commutator can be presented as a product of 2 squares were further found by him too \cite{Sar}.
The commutator subgroup of Sylow 2-subgroups of an alternating group have been previously investigated by the author \cite{SkCommEur}.


Taking in consideration the Sarkar's result we recall that
an every element of alternating group $A_n$ can be presented as a commutator of a symmetric group elements of the same degree $n$. Symmetric group $S_n$ is free group in class of permutation group. Therefore a natural question arises how many squares from $A_n$ as well as from $S_n$ one have to multiply to express such a commutator.

One of interesting algorithmic problem of combinatorial group theory was solved by Roman'kov \cite{Rom}.
 It was problem of determining for any element $g\in G$ is $g$ a commutator for free nilpotent group $N_r$ of arbitrary rank $r$ with class of nilpotency 2 \cite{Rom}.
The analogous problem can be formulated for a verbal set of squares.

The commutator width of perfect groups was completely studied by N. Nikolov \cite{Nik}.
In the work of author \cite{SkCommEur} it was proved  that the commutator width  of Sylows $2$-subgroups of ${{A}_{n}}$ is 1.

Through the ubiquity of group actions and the concrete representations which they afford, both finite and infinite permutation groups arise in many parts of mathematics and continue to be a lively topic of research in their own right. It then develops the combinatorial and group theoretic structure of primitive groups leading to the proof of the pivotal O'Nan-Scott Theorem which links finite primitive groups with finite simple groups.\\





\section{Preliminary}
The group is said to be \textbf{verbally simple} if it has no proper verbal
subgroups.

Considering multiplication of permutations
we utilize composing from left  to right,  as seems to be the practice in
introductory texts. However, granting the fact, the identity
$(1 2 3 ...  k) = (1 k) ... (1 3)(1 2)$
shows that that a $k$-cycle is even if and only if $k$ is odd.

Recall the Carmichael generator set $\langle s_n, t \rangle$:
$$
 s_n=\begin{cases}
(1,2)(3, \ldots, n),&\text{ $n\equiv 0 (mod 2)$},\\
(3,4, \ldots, n),     &\;\;\text{$n\equiv 1 (mod 2)$ }.\\
\end{cases}
$$
As well known \cite{CONRAD, Carm, Carm2}, the set of $t=(1,2,3)$ and $s_n$ is generating set for $A_{n}$, $n>3$.

We observe additional generating set of $A_n$ \cite{aviv} with 2 generators but which is not Carmichaecl
 generating set. In case of $n\equiv 1\left ( \bmod 2 \right)$ our generators are following:  $t=(1,2,3)$  and $s=(1,2,...,n)$, for even  $n$ the generators have form of cycles $t=(1, 2, 3)$ and $s=(2,3, … ,n)$. Consequantly the cycles number  $cyc(s)$ \cite{aviv} in the these generators $s, \, t$ are minimal.

 Following  Mitsuhashi's generating set is the nearest to Coxeter for $S_n$: $a_{ij}:= s_1s_{i} = (1,2)(i,i+1)$, $(2 \leq  i < j \leq n-1)$ \cite{aviv}.

 As well known the set $T$ of transpositions  $T= \{(1,2); (2,3); ... (n-1, n)   \} $ is generating set of involutive type for $S_n$.


Let $G$ be a group and $W$ be a set of words.

The verbal subgroup $V_W (G)$ of group $G$ is the subgroup generated by all
values of the words from $W$ in group $G$.

The set $W[G]$ generates
the verbal subgroup $V_W(G)$.


The  \textbf{width of the verbal subgroups} $ V_W (G)$
is equal to а least value of  $m\in \text{N}\cup \{\infty \}$ such that every element of the subgroup  $V_W\left( G \right)$  is represented as the  product of at most $m$  values of words  $V$.

The set of all \textbf{squares of elements} of $G$ is denoted by $\mathbb{S(G)}$.

The  \textbf{ diameter} of  $G$,  ${diam (G, S)}$  by set of  $S$  is defined to be the least
integer $n$, such that every element of  $G$ is a product of at most  $n$  elements from $S$ if such an integer $n$  exists, and  $diam(G,S) = \infty$  otherwise.

Another words \textbf{diameter} of $G$ is the maximum over $g \in G$ of the
length of the shortest expression of $g$ as a product of generators and their inverses.

For instance, if $G$ is cyclic of order
$N$ and $|S| = 1$ then $diam(F(G, S)) = [N/2]$.

The \textbf{Verbal width  of an element $g$ } over $V(G)$  $width(g, V_W(G))$ is such minimal $n$ that an element $g$ can be presented in form of a product of $n$ elements from $V_W(G)$.

The conditions when an arbitrary $g\in A_n$ can be presented as one squares were also found by us.


The Verbal Width of the commutator of a free group was investigated by Lyndon and Newman \cite{Lynd} and further by Sucharit Sarkar \cite{Sar}. They proved that an arbitrary commutator of free group of rank greater than 1 cannot be generated by only 2 squares. The conditions when a commutator can be presented as a product of 2 squares were further found by him too \cite{Sar}.
The commutator subgroup of Sylow 2-subgroups of an alternating group have been previously investigated by the author \cite{SkCommEur}.

We consider a similar problem in the commutator of the symmetric group $S_n$ that is alternating group $A_n$. Upon taking into account that the commutator subgroup of $S_n$ is the alternating group $A_n$, when $n>4$ \cite{Dix}, then the problem can be reformulated in terms of the alternating group.

Therefore, we research the verbal width by squares of $A_n$.

In a group $G$ a set of squares of its elements is denoted by $S(G)$.

Since $A_n$ is generated by all pairs of transpositions in particular even by mentioned above its Mitsuhashi's generating set that are included in $S(A_n)$ as a subset then $S(A_n)$ generates whole $A_n$ so one can consider $diam (A_n, S(A_n))$.

\begin{defn} We will call \textbf{mutually coprime cycles} that ones, which do not have the same elements.
\end{defn}
We denote cycles in cyclic presentation of $\pi$ by $C_{i}(\pi)$.

\begin{defn}
  The minimal number of transpositions in factorization of a permutation $\pi$ on transposition we will denote by $rnk(\pi)$. We set $rnk(e) =0$.
\end{defn}


For example, independent transpositions from the Moore generating system \cite{aviv} for $A_n$ can be used for factorization of $g$ in product of transpositions.
Note that presentation of $g\in {{A}_{n}}$ always has even number of transpositions.

 Consider the interaction of two interlinked permutations represented by squares.
 For this first note that if we have two special types of permutations $\pi_{1}, \pi_{2} \in S_{n}$ that decomposed into the product of transpositions, of $S_{n}$ then the following formula holds: 
\begin{equation}
    rnk(\pi_{1} \cdot \pi_{2}) = rnk(\pi_{1}) + rnk(\pi_{2}) -
2m, m \in \emph{N}.
\end{equation}

Note that addant '-2m' appears due to possible simplifying  of transpositions. 

The number $n-k(\pi )$ where  $k(\pi )$ is the number of cycles in decomposition of $\pi $  is  denoted  by $dec(\pi )$ i.e. decrement of permutation $\pi $.
As well known \cite{Sachk}  that the rank of $\pi $  is equal to $dec(\pi )$. Therefore the rank of $n$-cycle is  $n-1$.

Note, that a rank of permutation's product $\pi_{1} \pi_{2}$
  can be lesser then rank of each permutation $\pi_{1}, \pi_{2}$.

Let $k(\pi )$ be the number of cycles in decomposition of permutation $\pi $ of degree $n$.

The number $n-k(\pi )$ is  denoted  by $dec(\pi )$ and calls a decrement of permutation $\pi $.
As well known \cite{Sachk}  that the rank of $\pi $  is equal to $dec(\pi )$. Therefore the rank of $n$-cycle is $n-1$.

\begin{defn}
 The set of cycles $C_{1}(\pi), \
C_{2}(\pi),  \ldots , C_{m}(\pi), $ from $A_{n}, n
\geqslant 5$  satisfying the following condition $$ \sum_{i=1}^{m}
rnk(C_{i}(\pi)) = 2k, k \in \mathbb{N},$$  we will call
set of even rank. 
\end{defn}

\section{ Main result}

Let $s ^ {i} (G)$ be the image of $G$ after $i$ applications of squaring of all elements from $G$.
This mapping $s ^ {i} (G)$ keeps conjugacy classes in a alternating group. Let $h{{g}_{1}}{{h}^{-1}}={{g}_{2}}$  the its image under mapping $s ^ {1}(g_1)$ is ${{\left( h{{g}_{1}}{{h}^{-1}} \right)}^{2}}=hg_{1}^{2}{{h}^{-1}}$ is again conjugated to $hg_{2}^{2}{{h}^{-1}}$.

We have a decreasing sequence of subsets
$G \supset s (G) \supset s (s (G)) = s ^ 2 (G) \supset s (s (s (G)) = s ^ 3(G) \supset ... $
due to the finiteness of the group, this sequence on some number $i$ will no longer decrease (it will stabilize).

To understand the nature of this mapping, we will prove more precisely several statements that take into account the cyclic structure of elements from $A_n$.

Let us consider even permutation $g$ presented by odd $k$-cycle, where $k \equiv 1 (mod 2)$. Since $GCD(2,k)=1$ then there exists number $m$ such that
\begin{align*}
2\cdot m \equiv 1 \pmod k.
\end{align*}

Then
\begin{align*}
(g^m)^2 = g^{2m} = g.
\end{align*}
Therefore $g$ is square of an element of form $g^m$ for correspondent $m$.


\begin{lem}\label{odd}
 An element $g$ presented by odd $l$-cycle have invariant length under squaring.
\end{lem}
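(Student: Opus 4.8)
The plan is to track the single orbit produced by $g^2$ and show that it exhausts the entire support of $g$, which forces $g^2$ to be again an $l$-cycle, so that the cycle length $l$ is preserved. First I would fix a labelling $g = (a_0, a_1, \dots, a_{l-1})$, meaning $g(a_i) = a_{i+1}$ with indices read modulo $l$. Squaring then gives $g^2(a_i) = a_{i+2}$, so $g^2$ permutes exactly the same support $\{a_0, \dots, a_{l-1}\}$, of cardinality $l$; hence no length can be gained under squaring, and it remains only to verify that none is lost, i.e. that this support is a single $g^2$-orbit.

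The key step is the observation that the $g^2$-orbit of $a_0$ equals $\{\, a_{2j \bmod l} : j \geq 0 \,\}$. Because $l$ is odd we have $\gcd(2,l)=1$, so $2$ is invertible modulo $l$ and the map $j \mapsto 2j \pmod{l}$ is a bijection of $\mathbb{Z}/l\mathbb{Z}$. Consequently the residues $2j \bmod l$ run through every element of $\{0,1,\dots,l-1\}$, the orbit of $a_0$ attains its full size $l$, and $g^2$ is therefore a single $l$-cycle. This is precisely the assertion that the length $l$ is invariant under squaring; iterating the argument, every $g^{2^i}$ remains an $l$-cycle, which is consistent with the stabilising behaviour of the sequence $s^i(G)$ discussed above. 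The same coprimality $\gcd(2,l)=1$ is what underlies the earlier remark that $g=(g^m)^2$ for a suitable $m$, so $g$ lies in its own cyclic group of order $l$ together with all its squares.

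I do not anticipate a genuine obstacle here, since the statement is the odd case of the standard fact that a power $\sigma^{k}$ of an $l$-cycle decomposes into $\gcd(k,l)$ cycles each of length $l/\gcd(k,l)$. The only point deserving care is the coprimality $\gcd(2,l)=1$, which is exactly where the hypothesis that $l$ is odd is used: for even $l$ the same computation instead splits $g^2$ into two cycles of length $l/2$, so oddness is essential rather than incidental. One may equally phrase the argument through cyclic groups, noting that $\gcd(2,l)=1$ yields $\langle g^2 \rangle = \langle g \rangle$, so $g^2$ generates the same cyclic group of order $l$; but the explicit orbit computation above is the most transparent route to the cycle shape and is the version I would present.
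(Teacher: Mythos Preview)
Your proof is correct. The paper, however, takes a shorter and more algebraic route: it simply invokes the order formula $\mathrm{ord}(g^{k}) = \mathrm{ord}(g)/\gcd(k,\mathrm{ord}(g))$ with $k=2$ and $\mathrm{ord}(g)=l$, concludes from $\gcd(2,l)=1$ that $\mathrm{ord}(g^{2})=l$, and since a permutation of order $l$ supported on $l$ points is an $l$-cycle the length is preserved. Your main argument instead computes the $g^{2}$-orbit explicitly via $a_i \mapsto a_{i+2}$ and uses that $j\mapsto 2j$ is a bijection of $\mathbb{Z}/l\mathbb{Z}$; this is more hands-on and has the advantage of exhibiting the cycle structure directly (the same computation immediately yields the even case splitting into two $l/2$-cycles, which the paper proves separately in Lemma~\ref{2l}). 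You do mention the cyclic-group phrasing $\langle g^{2}\rangle=\langle g\rangle$ at the end, which is essentially the paper's argument, so you have both viewpoints; the only difference is which one is foregrounded.
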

\begin{proof}
Since $ord(g^k)=\frac{ord(g)}{GCD(k, ord(g))}$ and in this case $k=2$, $ord(g)=l$, so $GCD(k, l)=1$ then $ord(g^2)$ is equal to $ord(g)$. Thus, the length of such $l$-cycles exposed to second power is $l$.
\end{proof}






\begin{lem}\label{root}
The square root from odd $k$-cycle $g$ is equal to $\frac{k+1}{2}$ power of $g$.
\end{lem}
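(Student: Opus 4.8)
The plan is to prove the formula by direct verification, leaning on the order computation for a $k$-cycle together with the parity hypothesis. First I would observe that since $g$ is a $k$-cycle, its order is exactly $k$, so $g^{k}=e$. Because $k$ is odd, the quantity $m=\frac{k+1}{2}$ is a positive integer, and hence $g^{m}$ is a genuine element of the group; indeed it lies in $A_n$, being a power of the even permutation $g$, so the candidate root sits in the correct group.

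With this in hand, the core step is the single computation
\begin{align*}
\left(g^{\frac{k+1}{2}}\right)^{2}=g^{k+1}=g^{k}\cdot g=e\cdot g=g,
\end{align*}
where the last equality uses $g^{k}=e$. This exhibits $g^{\frac{k+1}{2}}$ as a square root of $g$, which is exactly the claim.

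The only point demanding attention is the integrality of the exponent: the construction relies on $k$ being odd so that $\frac{k+1}{2}\in\mathbb{N}$; for even $k$ the expression would not define a group element at all. In this sense the lemma is the explicit form of the congruence $2m\equiv 1\pmod{k}$ recorded just before the statement, since $m=\frac{k+1}{2}$ satisfies $2m=k+1\equiv 1\pmod{k}$, and the present lemma simply names the canonical representative solving that congruence. Consequently there is no genuine obstacle here: once the order of $g$ and the admissibility of the exponent are observed, the result follows immediately from the one-line computation above.
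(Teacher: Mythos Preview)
Your proof is correct and follows essentially the same route as the paper: both verify directly that $\bigl(g^{(k+1)/2}\bigr)^{2}=g^{k+1}=g$ using that a $k$-cycle has order $k$. The paper additionally invokes Lemma~\ref{odd} to remark that the square root is again a $k$-cycle, but the core computation is identical to yours.
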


\begin{proof}
As it was already mentioned in the Lemma \ref{odd},
 the length of an odd $k$-cycle remains invariant then we have that square root has the same length.
Let $g$ is presented as one $k$-cycle, $k\equiv 1\left(\bmod 2\right)$ then to find a square root of $g$ it is enough to compute ${{g}^{\frac{k+1}{2}}}$.
Indeed taking into consideration that order of $g$ is $k$ then after squaring of ${{g}^{\frac{k+1}{2}}}$ we have
${{g}^{k+1\left( \bmod k \right)}}\equiv g$.
Besides presentation of $k$-cycle can be starting from any of $k$ elements.
\end{proof}


\begin{lem}\label{2l}
 Squaring of cycle of degree $2l$ generates unique pair of $l$-cycles.
\end{lem}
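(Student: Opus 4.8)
The plan is to write the $2l$-cycle out explicitly and then track the action of its square on the index set. First I would fix a labelling $g = (a_0, a_1, \ldots, a_{2l-1})$, so that $g(a_i) = a_{i+1}$ with all indices read modulo $2l$. Squaring then gives $g^2(a_i) = a_{i+2}$, again modulo $2l$, which reduces the entire statement to understanding the orbits of the translation $i \mapsto i+2$ on the residues $\{0, 1, \ldots, 2l-1\}$ modulo $2l$.

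Next I would identify these orbits. Since $GCD(2, 2l) = 2$, repeated addition of $2$ partitions the indices into exactly two classes, the even indices and the odd indices, and each class contains exactly $l$ elements; moreover translation by $2$ acts transitively on each class. Consequently the restriction of $g^2$ to the even-indexed points is the $l$-cycle $(a_0, a_2, \ldots, a_{2l-2})$, and its restriction to the odd-indexed points is the $l$-cycle $(a_1, a_3, \ldots, a_{2l-1})$, so that
$$ g^2 = (a_0, a_2, \ldots, a_{2l-2})\,(a_1, a_3, \ldots, a_{2l-1}). $$
This already displays $g^2$ as a product of two disjoint $l$-cycles. As a consistency check one may invoke the order formula already used in Lemma \ref{odd}: $ord(g^2) = 2l / GCD(2, 2l) = l$, which agrees with the order of a product of two disjoint $l$-cycles and confirms that for $l \geq 2$ no point of the $2l$-element support is fixed.

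Finally, for the word \emph{unique} I would appeal to the uniqueness of the disjoint-cycle decomposition of a permutation: the two $l$-cycles exhibited above are determined by $g^2$ up to the order in which they are written, so the pair they form is intrinsic to $g$. The only step that requires genuine care, and the closest thing here to an obstacle, is verifying that the index set splits into precisely two orbits of equal size $l$ rather than some other number, which is exactly the computation $GCD(2, 2l) = 2$ together with the transitivity of the shift by $2$ on each parity class; everything beyond that is bookkeeping.
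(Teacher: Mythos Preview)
Your proof is correct and follows essentially the same approach as the paper: both arguments write the $2l$-cycle explicitly, compute that squaring sends each entry two positions forward, and read off the two disjoint $l$-cycles on the alternating positions, then invoke uniqueness of the cycle decomposition. Your version is somewhat more formal (phrasing it via orbits of $i\mapsto i+2$ and the computation $GCD(2,2l)=2$, with an extra order check), but the underlying idea is identical to the paper's.
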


\begin{proof}
Since when squaring a cycle of the form $\tau =\left( {{g}_{1}}...{{g}_{2l}} \right)=\left( {{a}_{1}}{{b}_{1}}{{a}_{2}}{{b}_{2}}\,...\,\,{{a}_{l}}{{b}_{l}} \right)$
an element ${{a}_{1}}$ mapping into ${{a}_{2}}$ analogously ${{a}_{i}}$ mapps into ${{a}_{i+1}}$, where $0<i<l-1$, in this case, the $l$-cycle is closed under the mapping ${{a}_{l}}$ to ${{a}_{1}}$. Therefore, we get a cycle 2 times shorter length, viz $l$. Similarly, element ${{b}_{i}}$ maps to ${{b}_{i+1}}$ and the second $l$-cycle forms. Due to the uniqueness of multiplication, there can be no other cyclic structure in ${{\tau }^{2}}$.
Note that, $l$ may already be as an odd number, i.e. such $l$-cycle generates an even substitution, and as an even number. But in the result of multiplication of the obtained $l$-cycles an even permutation is generated.
\end{proof}

\begin{prop}\label{necessaryCondofSquare}
 {\sl  Let $g\in {{A}_{n}}$. If $g$ can be presented in the form of a product of mutually coprime cycles containing an odd number of $2k$-cycles, then $g$ cannot be generated in the form ${{h}^{2}}{{t}^{2}}$, where $h, t \in A_n$ are mutually coprime. In the important particular case where $g=h^2$ we have $t=e$. Furthermore, the same holds even if $h,t\in {{S}_{n}}$.}
\end{prop}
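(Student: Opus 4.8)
The plan is to isolate a parity invariant that every perfect square in $S_n$ must satisfy, and then show that the hypothesis on $g$ violates it. For a permutation $\sigma$, write $m_{j}(\sigma)$ for the number of $j$-cycles in the disjoint cycle decomposition of $\sigma$. First I would record the core computation: how squaring acts on cycle lengths. By Lemmas~\ref{odd} and~\ref{root} an odd cycle squares to a single cycle of the same odd length, while by Lemma~\ref{2l} a cycle of even length $2l$ squares to exactly two $l$-cycles. Since disjoint cycles commute, one has $\sigma^{2}=\prod_i C_i(\sigma)^{2}$, so the cycle type of $\sigma^{2}$ is the disjoint union of the cycle types of the individual $C_i(\sigma)^{2}$. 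A cycle of $\sigma^{2}$ of even length $2k$ can therefore come only from a cycle $C_i(\sigma)$ of length $4k$, each such cycle contributing a pair of $2k$-cycles; odd cycles and even cycles of length $\neq 4k$ contribute only cycles of length $\neq 2k$. Hence
\begin{equation*}
m_{2k}(\sigma^{2}) = 2\,m_{4k}(\sigma),
\end{equation*}
which is even for every $k$. Thus a necessary condition for a permutation to be a single square is that it contain an even number of cycles of each even length.

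Next I would pass from a single square to a product of two coprime squares. Because $h$ and $t$ are mutually coprime, their supports are disjoint, so $h^{2}$ and $t^{2}$ also have disjoint supports and the cycle decomposition of $g=h^{2}t^{2}$ is simply the union of those of $h^{2}$ and of $t^{2}$. Therefore $m_{2k}(g)=m_{2k}(h^{2})+m_{2k}(t^{2})$, and by the identity above both summands are even, which forces $m_{2k}(g)$ to be even for every $k$. This directly contradicts the hypothesis that $g$ contains an odd number of $2k$-cycles, so no factorization $g=h^{2}t^{2}$ with $h,t$ coprime can exist.

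For the particular case $g=h^{2}$ I would simply set $t=e$ in the additivity identity, so that $m_{2k}(g)=m_{2k}(h^{2})$ is again even and the same parity obstruction already rules out $g$ being a single square. Finally, the argument is insensitive to whether the factors lie in $A_n$ or in $S_n$: Lemmas~\ref{odd}, \ref{root} and~\ref{2l} describe squaring for an arbitrary permutation, and only the disjointness of supports is used, so the conclusion holds verbatim for $h,t\in S_n$. The step I regard as the crux is the bookkeeping behind $m_{2k}(\sigma^{2})=2\,m_{4k}(\sigma)$, namely verifying that a $2k$-cycle of $\sigma^{2}$ cannot arise from anything other than a $4k$-cycle of $\sigma$; this rests on the uniqueness clause of Lemma~\ref{2l} together with the observation that odd cycles, and even cycles of length $\neq 4k$, produce only cycles of length $\neq 2k$. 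Once this is pinned down, the parity count and the additivity under disjoint supports finish the proof at once.
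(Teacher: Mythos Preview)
Your proof is correct and follows essentially the same route as the paper: both use Lemma~\ref{2l} to argue that $2k$-cycles in a square $\sigma^{2}$ arise only in pairs from $4k$-cycles of $\sigma$, then invoke disjointness of supports to add the contributions of $h^{2}$ and $t^{2}$ and reach the parity contradiction. Your explicit identity $m_{2k}(\sigma^{2})=2\,m_{4k}(\sigma)$ is a cleaner bookkeeping of exactly the observation the paper makes in prose.
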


\begin{proof} Let us assume that $g = {{h}^{2}}{{t}^{2}},\,\,\,h,t\in {{A}_{n}}$ and $g$ contains odd number of cycles of degree $2k_i$.
As it is shown in Lemma \ref{2l} the square of one $2{{l}_{i}}$-cycle is product of two ${{l}_{i}}$-cycles. Therefore, mutually coprime $2{{l}_{i}}$-cycles after squaring gives only pairs of mutually coprime ${{l}_{i}}$-cycles, where ${{l}_{i}}$ can be as odd as well as even.
We take into consideration only those of them where $l_i$ is even, precisely $l=2k_i$.
 Another way of appearing $2k_i$-cycle after squaring of mutually coprime cycles according to Lemma \ref{2l} does not exist.

  So number of cycles of even degree in ${{h}^{2}}{{t}^{2}}$ as well as in ${{h}^{2}}$ can be only even, thus the contradiction is found.
\end{proof}

\begin{exm}
The element $g=(1,3)(2,4)(5,7)(6, 8, 10)(9, 11, 12)$ is not product of mutually coprime elements $h,t\in S({A}_{12})$.
\end{exm}

\begin{cor}
An element with asymmetric cyclic structure $[{{m}^{1}},{{l}^{1}}]$, where $m\,\ne l$, $m=2t, l=2s$, can not generated as a product of squares of mutually coprime cycles. Example of such cycle is $({{c}_{1}}...{{c}_{m}})({{b}_{1}}...{{b}_{l}}),\,\,\,l\ne m$.
\end{cor}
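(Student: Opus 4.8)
The plan is to derive the Corollary directly from the squaring analysis of Lemmas \ref{odd} and \ref{2l}, sharpening the counting argument of Proposition \ref{necessaryCondofSquare} from a global parity of even-length cycles to a parity recorded separately for each even cycle-length. First I would suppose, for contradiction, that $g=\sigma_1^2\sigma_2^2\cdots\sigma_r^2$ where $\sigma_1,\ldots,\sigma_r$ are mutually coprime cycles (this is the precise meaning of ``product of squares of mutually coprime cycles''). Because the $\sigma_i$ have pairwise disjoint supports, so do the $\sigma_i^2$, and hence the cycle type of $g$ is exactly the disjoint union, as a multiset, of the cycle types of the individual squares $\sigma_i^2$.

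Next I would classify each factor $\sigma_i^2$ by the parity of the length of $\sigma_i$. By Lemma \ref{odd}, if $\sigma_i$ is an odd-length cycle then $\sigma_i^2$ is again a single odd-length cycle, so it contributes no even-length cycle to $g$. By Lemma \ref{2l}, if $\sigma_i$ is a $2j$-cycle then $\sigma_i^2$ is a product of exactly two disjoint $j$-cycles; these are of even length only when $j$ is itself even, and in that case they are two cycles of the \emph{same} even length $j$. Consequently every even-length cycle appearing in $g$ arises in such an equal-length pair, and for each fixed even length the number of cycles of that length occurring in $g$ must be even.

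Finally I would apply this to $g=(c_1\ldots c_m)(b_1\ldots b_l)$ with $m\ne l$ and $m=2t$, $l=2s$. Here $g$ contains exactly one cycle of the even length $m$ and exactly one cycle of the even length $l$; each of these counts equals $1$, which is odd. This contradicts the parity just established, so no such factorization can exist, which is the assertion of the Corollary.

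The delicate point, and the place where the hypothesis $m\ne l$ is genuinely used, is the second step's observation that even cycles come in equal-length pairs, not merely in pairs. The global count of even-length cycles of $g$ is two, which is even, so the cruder statement of Proposition \ref{necessaryCondofSquare} alone does not rule $g$ out; one must track lengths. Were $m=l$, the two equal even cycles could be produced by squaring a single $2m$-cycle and the obstruction would vanish. I expect the main care to go into justifying that the cycle type of the product is the disjoint union of the factors' cycle types, which relies on mutual coprimality, and that no cancellation or merging of supports can manufacture a lone even-length cycle.
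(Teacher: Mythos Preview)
Your argument is correct and is essentially the paper's approach: the Corollary is stated there without proof, as an immediate consequence of Proposition~\ref{necessaryCondofSquare} together with Lemmas~\ref{odd} and~\ref{2l}. Your steps (disjoint supports $\Rightarrow$ cycle type is a disjoint union; odd cycles stay odd; even $2j$-cycles square to two $j$-cycles of equal length; hence each even length occurs with even multiplicity) reproduce exactly the mechanism inside the proof of Proposition~\ref{necessaryCondofSquare}.

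One small correction to your commentary: you describe Proposition~\ref{necessaryCondofSquare} as a ``cruder'' global parity on all even-length cycles and say it does not by itself rule out $g$. In fact the Proposition already speaks of an odd number of $2k$-cycles for a fixed $k$ (its proof explicitly produces pairs of $l_i$-cycles of the \emph{same} length), so taking $2k=m$ applies it directly to the element $[m^1,l^1]$. Your sharper per-length bookkeeping is thus not an extension of the Proposition but a faithful restatement of it; the paper's wording is admittedly ambiguous, so spelling this out, as you do, is worthwhile.
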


We consider the squareness problem.
Does it exist algorithm that determining for any element $g\in G$ whether $g\in S(G)$ or another words whether width $ width(g, S(G))$=1.

Analogous problem of commutator verbal subgroup for free nilpotent group $N_r$ of arbitrary rank $r$ was solved by Roman'kov \cite{Rom}.

To describe necessary and sufficient conditions when an element $g \in A_n$ has verbal length by squares $S(A_n)$ equal to 1 and  to indicate all obstructions
to an element $g$ being the square in $A_n$ we introduce some new notations and deduce the criterion.

We denote by ${{p}_{l}}$ the number of pairs of $l$-cycles in cyclic presentation of $g$. We define fixator of a permutation $g$ (and denoted by $|Fix\left( g \right)| >1$) as the number of element of $\{1,...,n\}$, which are fixed under action of $g$ or power of  the fixator of $g$.
 We define $m_i$ as the number of cycles of length $i$ in cyclic structure. Note that $p_i=[\frac{m_i}{2}]$.
The following statement is criterion of square root existing.

\begin{thm} \label{Criterion}
 {\sl If in cyclic structure of $g \in {{A}_{n}}$ every even cycle appears an even number of times, i.e.
${{m}_{2k}}\equiv 0\left( \bmod \,2 \right)$,
and at least one of the two following conditions holds:}

1)  $$\left[
\begin{matrix}
	 |Fix\left( g \right)|>1 & (a)\\
	 \underset{k\in N}{\mathop{\max }}\,\left( {{m}_{2k-1}} \right)>1, & (b)
\end{matrix}
\right.$$

2)  $\sum\limits_{l=1}^{L}{{{p}_{2l}}}\equiv 0\left( \bmod \,2 \right),$

then this $g$ can be presented as $g={{h}^{2}}$,  $h\in {{A}_{n}}$. The vice versa is also true.

If we express the condition for the existence of a square root over $ S_n$ then condition 2) became to be a more simple: $m_{2l} \equiv 0 (mod 2)$ for all $l$ and the condition 1) is reduced.
\end{thm}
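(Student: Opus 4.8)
The plan is to read off, from Lemmas \ref{odd}, \ref{root} and \ref{2l}, exactly which permutations $h$ can square to a given $g$, and then to track the parity of $h$ across all of them. Squaring acts cycle-by-cycle on $h$: by Lemma \ref{odd} each odd $l$-cycle of $h$ produces one odd $l$-cycle of $g$, while by Lemma \ref{2l} each $2l$-cycle of $h$ produces a pair of $l$-cycles of $g$. Hence a square root $h$ is obtained by partitioning the cycles of $g$ as follows: every odd cycle may either be taken as a direct root (one odd $l$-cycle of $h$) or be paired with a second $l$-cycle of the same odd length $l$ and lifted to a single $2l$-cycle of $h$; every even cycle of $g$, on the other hand, cannot be a direct image (squaring never returns a single even cycle) and must be paired with an equal-length partner and lifted. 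This last remark forces $m_{2k}\equiv 0\pmod 2$ for every $k$, which is the standing hypothesis and is also the classical necessary condition for a root to exist even in $S_n$. For each odd length $l$ I would introduce the free parameter $q_l$, the number of pairs of $l$-cycles that are merged ($0\le q_l\le\lfloor m_l/2\rfloor$), and set $Q=\sum_{l\text{ odd}}q_l$; then the cycle type of $h$, and hence its parity, depends only on the tuple $(q_l)$.

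The heart of the argument is a parity computation. Writing $k(h)$ for the number of cycles of $h$, the construction gives
\[
k(h)=\sum_{k}\frac{m_{2k}}{2}+\sum_{l\text{ odd}}m_l-Q=C_0-Q,
\]
where $C_0$ is the (maximal) cycle number attained when $Q=0$. Since $h$ is even iff $n-k(h)$ is even, I would compute $n-C_0$ modulo $2$. Splitting $n=\sum_{l\text{ odd}} l\,m_l+\sum_k 2k\,m_{2k}$ and using that $l-1$ is even for odd $l$ and that $m_{2k}=2p_{2k}$, the odd-length part contributes nothing and the even-length part contributes $\sum_k p_{2k}(4k-1)\equiv\sum_k p_{2k}\pmod 2$. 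Thus $n-C_0\equiv\sum_{l}p_{2l}\pmod 2$, and therefore
\[
n-k(h)\equiv \sum_{l}p_{2l}+Q\pmod 2,
\]
so that $h\in A_n$ precisely when $Q\equiv\sum_l p_{2l}\pmod 2$. This identity is the crux and the step I expect to be most delicate, both in getting the half-integer bookkeeping of the $2k$-cycles right and in being sure the enumeration of roots is exhaustive.

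With this in hand the theorem follows by a short case analysis, after observing that condition (1) --- i.e. $|Fix(g)|>1$ or $\max_k m_{2k-1}>1$ --- is equivalent to ``$m_l\ge 2$ for some odd $l$'', i.e. to $Q_{\max}:=\sum_{l\text{ odd}}\lfloor m_l/2\rfloor\ge 1$. If condition (1) holds then $Q$ ranges over the full interval $\{0,1,\dots,Q_{\max}\}$ and so realizes both parities; choosing $Q\equiv\sum_l p_{2l}\pmod 2$ yields an even root, independently of condition (2). If condition (1) fails then no odd length repeats, every odd cycle of $g$ is forced to be a direct root, the cycle type of $h$ is uniquely determined, $Q=0$, and the displayed congruence shows $h$ is even iff $\sum_l p_{2l}\equiv 0\pmod2$, which is exactly condition (2). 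Combining the two cases gives sufficiency of ``$m_{2k}$ even and (1) or (2)''; for the converse, a root $h\in A_n$ forces $m_{2k}$ even, and if (1) failed then the forced $Q=0$ together with $n-k(h)$ even would force $\sum_l p_{2l}\equiv0$, i.e. (2). Hence the conditions are also necessary.

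Finally, over $S_n$ the parity of $h$ is irrelevant, so the congruence $Q\equiv\sum_l p_{2l}$ imposes no constraint: the sole requirement is the existence of some root, namely $m_{2l}\equiv 0\pmod 2$ for all $l$, and conditions (1) and (2) drop out, as claimed.
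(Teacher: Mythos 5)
Your proof is correct, and its skeleton is the same as the paper's: both rest on Lemma \ref{odd} and Lemma \ref{2l}, both observe that a root $h$ is assembled by pairing equal even-length cycles of $g$ (forcing $m_{2k}\equiv 0 \pmod 2$) and by optionally fusing pairs of equal odd-length cycles of $g$ into double-length cycles of $h$, and both control membership $h\in A_n$ through the parity effect of those fusions. Where you genuinely go beyond the paper is in making this quantitative: you parameterize \emph{all} square roots by the fusion count $Q$ and derive the closed identity $n-k(h)\equiv \sum_l p_{2l}+Q \pmod 2$, from which every case falls out mechanically. The paper instead argues constructively and case by case: if $\sum_l p_{2l}$ is even it builds an even root directly, and if it is odd it invokes condition (b) to fuse two $(2k-1)$-cycles into a single $(4k-2)$-cycle, flipping the parity of the root; condition (a) is mentioned but never worked out, and the converse (``the vice versa'') is asserted rather than argued. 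Your identity handles (a) automatically, since fixed points are odd $1$-cycles and fusing two of them into a transposition is the $l=1$ instance of the same move; it yields the necessity direction for free, because when no odd length repeats $Q=0$ is forced and $h\in A_n$ then forces condition 2); and it gives the $S_n$ variant in one line. The one step you flagged as delicate --- exhaustiveness of the enumeration of roots --- does hold: squaring acts independently on the disjoint cycles of $h$, so by Lemmas \ref{odd} and \ref{2l} every root of $g$ arises from exactly one choice of pairings and fusions, and your bookkeeping covers them all.
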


\begin{proof}
Since a square of each even $2l$-cycle of $h$ consists of two $l_i$-cycles by Lemma \ref{2l} then if a number of all $l_i$-cycles in a cyclic decomposition of $g=h^2$ is multiple of 4 then a square root $h$ contains two $2l_i$-cycles, as a result $h$ is an even permutation. In other words if a number of pairs $p_{l_i}$ in $g$ is even then square root of $g$ is even too.

 Indeed, one even cycle realizes an odd permutation. Thus, such element is not from $A_n$. But if a just mentioned $g$ has even number of pairs $p_i$ consisting of $l_i$ cycles for each $l_i$, then $h$ has even number of $2l_i$-cycles. So, $h$ realizes an even permutation. Therefore in this case $h$ belongs to $A_n$. Hence a square of such element $h$ have to contain an even number $L$ of pairs of $l_i$ cycles. Thus, if condition 2) $\sum\limits_{l=1}^{L}{{{p}_{2l}}}\equiv 0\left( \bmod \,2 \right)$ of this Theorem holds, then the square root as one element of $A_n$ exists.

 Let us show that if neither condition 2) nor the requirement from subparagraph a) of condition 1) is satisfied, then the fulfillment of item b) gives such square roots, one of which is an even permutation.
 Assume that $g$ has odd number of $p_i$ then one of square root $h$ is odd permutation. But if cyclic decomposition of $g$ satisfy the condition of item b): $\underset{k\in N}{\mathop{\max }}\,\left( {{m}_{2k-1}} \right)>1$, then the pair of $2k-1$-cycles exists in $g$.
  Taking in consideration that two odd ${(2k-1)}$-cycles is square of one even $(4k-2)$-cycle, therefore if $g$ contains more than ${(2k-1)}$-cycle
 then we can split off two of them and represent them as a square from one $(4k-2)$-cycle that presents odd permutation. This odd permutation this odd substitution complements the already existing odd permutation as a factor.

 As a result, this odd permutation $\pi$ complements the already existing odd permutation $\tau$ and  the product $\tau \cdot \pi$ became to be even permutation.
\end{proof}

\begin{cor}
The permutation $\pi \in S_n$ is square in $S_n$ iff ${{{m}_{2l}}}\equiv 0\left( \bmod \,2 \right)$ for each $l \in \mathbb{N}$.
\end{cor}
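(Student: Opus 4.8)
The plan is to prove both implications directly from the cycle-splitting behaviour already recorded in Lemmas \ref{odd}, \ref{root} and \ref{2l}, using the fact that membership in $S_n$ imposes no parity constraint, so that the auxiliary conditions 1) and 2) of Theorem \ref{Criterion} collapse to the single congruence. First I would write the disjoint cycle decomposition of a hypothetical root $\sigma\in S_n$ and track how squaring acts cycle by cycle: a single cycle of $\sigma$ of length $m$ contributes to $\sigma^2$ exactly one $m$-cycle when $m$ is odd (Lemmas \ref{odd} and \ref{root}), and exactly two $(m/2)$-cycles when $m$ is even (Lemma \ref{2l}). Since the cycles of $\sigma$ act on disjoint supports, these contributions are independent and together exhaust the cycle type of $\sigma^2$.

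For necessity, suppose $\pi=\sigma^2$ and fix $l\in\mathbb{N}$; the aim is to count the $2l$-cycles of $\pi$. By the cycle-by-cycle description, a cycle of even length $2l$ in $\pi$ can only be produced by the splitting of a single $4l$-cycle of $\sigma$, which always yields an even number, namely a pair, of $2l$-cycles; odd cycles of $\sigma$ remain odd and hence never produce an even-length cycle. Consequently the number $m_{2l}$ of $2l$-cycles of $\pi$ equals twice the number of $4l$-cycles of $\sigma$, so $m_{2l}\equiv 0\pmod 2$ for every $l$.

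For sufficiency, assuming $m_{2l}\equiv 0\pmod 2$ for all $l$, I would build a root $\sigma$ on each cycle type separately. On every odd $k$-cycle of $\pi$ take the intrinsic root of Lemma \ref{root}, that is, raise the cycle to the power $(k+1)/2$. For the even cycles, use the hypothesis to pair up the $2l$-cycles of $\pi$; each pair is realised as the square of a single $4l$-cycle obtained by interleaving the two $2l$-cycles in the alternating pattern $(a_1 b_1 a_2 b_2\ldots)$ read off from the proof of Lemma \ref{2l}. Taking the product of all these local roots over the disjoint supports yields a permutation $\sigma\in S_n$ with $\sigma^2=\pi$, and no further adjustment is needed because there is no evenness requirement to satisfy.

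The only genuinely delicate point is the necessity argument, where one must be certain that the enumeration of the sources of a $2l$-cycle is exhaustive: that odd cycles contribute nothing of even length and that even cycles split exactly as in Lemma \ref{2l}, with no hidden mechanism creating a lone even cycle. Once that completeness is secured, the pairing is forced. This is precisely the feature that, in the alternating group, must be supplemented by the parity bookkeeping of conditions 1) and 2) of Theorem \ref{Criterion}; discarding the ambient evenness requirement is exactly what reduces the criterion to the single congruence $m_{2l}\equiv 0\pmod 2$ stated here.
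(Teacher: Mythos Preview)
Your argument is correct and follows essentially the same route as the paper. The paper does not supply a separate proof of this Corollary: it is stated immediately after Theorem~\ref{Criterion}, whose proof already rests on Lemmas~\ref{odd}, \ref{root} and~\ref{2l}, and the last sentence of that theorem explicitly remarks that over $S_n$ conditions 1) and 2) collapse to the single congruence $m_{2l}\equiv 0\pmod 2$. Your direct cycle-by-cycle analysis simply unpacks this reduction, using the same lemmas in the same way.
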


The following example illustrates fulfillment of condition (b) of just formulated Theorem \ref{Criterion}.
\begin{exm}
 The element $g=(1,3)(2,4)(5,7,9)(6, 8, 10)(11, 13, 12)$ satisfying condition (b) of previous Theorem \ref{Criterion} is square of one element in alternating group of minimal possible degree for this permutation, that is 13. Then square root of $g$ is $h=(1,2,3,4)(5,6,7,8,9,10)(11, 12, 13)$ (viz $h^2=g$). Note that $h$ contains precisely 2 cycles of even degree therefore $h$ realizes even permutation.
Another permutation satisfying this condition (b) is $(1,2)(3,4)(5,6,7)(8,9,10)$ here is $m_3 =2$.
 \end{exm}

\begin{exm}
 The element
$g= (10,11,12)(1,2,3)(4,5)(6,7)$ is square in $A_{12}$. Indeed, consider $h=(10,12,11)(1,3,2) (4,6,5,7)(8,9)$, then $h^2=(10,11,12)(1,2,3) (4,5)(6,7)$. The cyclic structure of the element $g$ satisfies to conditions a) and b) of Theorem \ref{Criterion}.
Note that also another root of $g$ exists $g= (10,11,12)(1,2,3)(4,5)(6,7) = ((10,1,11,2,12,3)(4,6,5,7))^2$.
\end{exm}

\begin{exm}
It is easy to check that the element
$$g=(1,2)(3,4)(5,6,7,8)(9,10,11,12)$$
 is square in $A_{12}$ too. This confirms criterion of existing square root formulated in Theorem \ref{Criterion} because cyclic structure of $g$ satisfies item 2.
\end{exm}

\begin{exm}
The following element $(1, 2 )(3, 4, 5, 6)$ can not be presented as square of $g\in {{A}_{6}}$. The element $(1,2,3)(4,5)(6,7)$ also is square neither over $A_7$ nor over $A_8$ but this element is square over $S_7$.
\end{exm}

{\bf Example.} The element $g= (1,2)(3,4)(5,6)(7,8)$ is square in $A_8$, in spite of the fact that this permutation $g$ has no fixed points.






{\bf Property }\label{closedness}
{\sl The set $S(A_n)$ is not a subgroup in $A_n$. Further, $A_n$ is verbally simple in particular the normal closure of $S(A_n)$ is $A_n$.}

\begin{proof}
If $S(A_n)$ was a subgroup in $A_n$ or normal closure formed a proper subgroup in $A_n$ then it would be normal subgroup in $A_n$ which is contradiction because of $A_n$ is simple group, when $n>4$. In other words, we can claim.
Since any verbal subgroup of any group is a fully invariant subgroup \cite{Chekh} then $V_{S(A_n)}(A_n)=A_n$. Because of every fully invariant subgroup is a normal \cite{Chekh, ChekhDanch}. Thus, the normal closure of $S(A_n)$ is $A_n$.

Let us show algebraic non-closedness of $S(A_n)$ under multiplication.
 Indeed, product of two squares $(1,2,3)(2,3,4) = (1,3)(2,4)$ is an even permutation but is not a square in $A_4$ because of $(1,3)(2,4)$ has unique root in $S_4$ which is $(1,2,3,4)$. But $(1,2,3,4)$ is an odd permutation, therefore  $(1,3)(2,4)\notin S(A_n)$ while $(1,2,3)$ and $(2,3,4)$ are squares in $A_4$.

The next example is over $A_{10}$:
 \begin{align*}
  & {{g}_{1}}=\left( 1,3,2 \right)\left( 4,6,5 \right)\left( 7,8 \right)\left( 9,10 \right) \\
 & {{g}_{2}}=\left( 1,3,2 \right) \\
 & \left( 1,3,2 \right)\left( 4,6,5 \right)\left( 7,8 \right)\left( 9,10 \right) \times \left( 1,3,2 \right)=\left( 4,6,5 \right)\left( 7,8 \right)\left( 9,10 \right)=\pi
\end{align*}
Note that $g_1, g_2 \in S(A_{10})$ but $\pi \notin S(A_{10})$. So the set of all squares $S(A_n)$ from $A_n$ does not coincide with the whole ${{A}_{n}}$. More interesting example is the following:
\begin{align*}
  & {{g}_{1}}=\left( 1,3,5 \right)\left( 4,6 \right)\left( 7,8 \right)\in S\left( {{A}_{10}} \right) \\
 & {{g}_{2}}=\left( 6,7 \right)\left( 9,10 \right)\in S\left( {{A}_{10}} \right) \\
 & {{g}_{1}}{{g}_{2}}=\left( 1,3,5 \right)\left( 6,4,7,8 \right)\left( 9,10 \right)\notin S\left( {{A}_{10}} \right)
\end{align*}
\end{proof}



\begin{prop}\label{StrofRoot}
Let the element $\pi =\left( {{a}_{1}}\,...\,{{a}_{l}} \right)\left( {{b}_{1}}\,...\,\,{{b}_{l}} \right) \in {{A}_{2l}}$ then the square roots from $\pi $ exist in ${A}_{2l}$. Moreover, there are $l+1$ different roots from $\pi$ when $l$ is odd and there are $l$ different roots if $l$ is even. If $l\equiv 1 (mod 2)$ then cyclic structure of a root can be either $[l^2]$ or $[2l]$.
\end{prop}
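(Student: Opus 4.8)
The plan is to enumerate every square root of $\pi$ by classifying the possible cycle structures of a root $h$ and then counting the roots of each admissible structure. The organising principle is supplied by the earlier lemmas: by Lemma \ref{odd} and Lemma \ref{root} an odd $m$-cycle of $h$ squares to a single $m$-cycle of the same odd length (with a unique odd-cycle root), while by Lemma \ref{2l} an even $2m$-cycle of $h$ squares to a uniquely determined pair of $m$-cycles. Since $\pi$ displaces all $2l$ points, a root $h$ has no fixed points, and its cycles must reassemble under squaring into exactly the two $l$-cycles $\alpha=(a_1\dots a_l)$ and $\beta=(b_1\dots b_l)$. Matching the cycles of $h$ against these two $l$-cycles leaves only two possibilities: either a single even cycle covers all $2l$ points, forcing $h$ to be a $2l$-cycle, or two odd cycles each square to one of $\alpha,\beta$, forcing the structure $[l^2]$ and requiring $l$ to be odd.

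Next I would count the roots of type $[2l]$. Such an $h$ cannot preserve the partition into the supports $\{a_i\}$ and $\{b_j\}$ (otherwise it would split into shorter cycles), so it must interchange them and therefore alternate between the two sets. Writing $h(a_i)=b_{f(i)}$ and $h(b_j)=a_{g(j)}$, the requirement $h^2=\pi$ becomes $g\circ f = s$ and $f\circ g = s$, where $s\colon i\mapsto i+1 \pmod l$ is the shift induced by $\alpha$ and $\beta$. Eliminating $g$ shows that $f$ must commute with $s$; since the centraliser of an $l$-cycle is the cyclic group it generates, $f\in\langle s\rangle$, giving exactly $l$ admissible maps and hence $l$ distinct $2l$-cycle roots. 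Concretely these are obtained by fixing $a_1$ and inserting the $l$ cyclic rotations of $(b_1\dots b_l)$ between the $a_i$. Each of them, being a single cycle of even length $2l$, is an odd permutation.

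Then I would treat the type $[l^2]$, which occurs only for odd $l$. Here each of $\alpha,\beta$ must be the square of an $l$-cycle of $h$ supported on $\{a_i\}$ and $\{b_j\}$ respectively; by Lemma \ref{root} these roots are $\alpha^{(l+1)/2}$ and $\beta^{(l+1)/2}$, and uniqueness follows because the centraliser of an $l$-cycle is cyclic of order $l$, so no $l$-cycle outside $\langle\alpha\rangle$ can square to $\alpha$. Thus there is exactly one root of type $[l^2]$, and being a product of two odd-length cycles it is an even permutation lying in $A_{2l}$. Summing the two contributions yields $l+1$ roots when $l$ is odd and $l$ roots when $l$ is even, and it is precisely the single $[l^2]$ root that furnishes a square root inside $A_{2l}$, in agreement with the criterion of Theorem \ref{Criterion}.

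The step I expect to be the main obstacle is proving completeness together with the exact count for the $[2l]$ case: one must argue that no cycle type other than $[2l]$ and $[l^2]$ can arise, and then pin down the number of interleavings without over- or under-counting. The centraliser computation, namely that a permutation commuting with the $l$-shift must be a power of it, is the technical heart of the enumeration, and the parity bookkeeping (an even-length cycle is an odd permutation, whereas a product of two odd-length cycles is even) is what reconciles the total count over $S_{2l}$ with the existence of a root inside $A_{2l}$.
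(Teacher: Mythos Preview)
Your approach is essentially the same as the paper's: both classify roots by cycle type ($[2l]$ versus $[l^2]$), build the $2l$-cycle roots by interleaving the $b$-sequence between the $a$'s, and pick up the extra $[l^2]$ root only when $l$ is odd. Where you go further is in rigour. The paper simply asserts that ``each shift of a subsequence $\{b_j\}$ relatively $\{a_i\}$ yields a new element'' and that there are exactly $l$ of these, without arguing that no other $2l$-cycle can square to $\pi$; your centraliser computation (the interleaving map $f$ must commute with the $l$-shift $s$, hence $f\in\langle s\rangle$) supplies exactly this missing completeness step, and your explicit exclusion of cycle types other than $[2l]$ and $[l^2]$ is likewise absent from the paper. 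Your parity bookkeeping---that the $l$ roots of type $[2l]$ are odd permutations while only the single $[l^2]$ root lies in $A_{2l}$---is also sharper than the paper's treatment and in fact clarifies an ambiguity in the first clause of the proposition.
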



\begin{proof}
Let $\pi=(a_1...a_l )(b_1...b_l )$, then an element of the form $\tau =\left( {{g}_{1}}...{{g}_{2l}} \right)=\left( {{a}_{1}}{{b}_{1}}{{a}_{2}}{{b}_{2}}\,...\,\,{{a}_{l}}{{b}_{l}} \right)$ is square root of $\tau$. After being squared, it will give the product of two $l$-cycles, i.e. element of the form $\pi =\left( {{a}_{1}}\,...\,{{a}_{l}} \right)\left( {{b}_{1}}\,...\,\,{{b}_{l}} \right)$. Actually, the required element is $(a_1 b_1 a_2 b_2...a_l b_l )^2=(a_1...a_l )(b_1...b_l )$.
In more detail, when we square a cycle of the form $\tau =\left( {{g}_{1}}...{{g}_{2l}} \right)=\left( {{a}_{1}}{{b}_{1}}{{a}_{2}}{{b}_{2}}\,...\,\,{{a}_{l}}{{b}_{l}} \right)$,
an element ${{a}_{1}}$ maps into ${{a}_{2}}$,
 analogously ${{a}_{i}}$ maps into ${{a}_{i+1}}$, where $0<i<l-1$, at last, the $l$-cycle is closed under the mapping
 ${{a}_{l}}$ to ${{a}_{1}}$. Therefore we get a cycle of 2 times shorter length, viz $l$. Similarly, element ${{b}_{i}}$ maps to ${{b}_{i+1}}$ and the second $l$-cycle forms. Due to the uniqueness of multiplication, there can be no other cyclic structure of $\sqrt{\tau }$.

We can notice, that each of the $l$-cycles can be of odd or even length. However, their product is always even.
Besides, presentation of $l$-cycle can be starting from any of a $l$ elements.
Each shift of a subsequence $\{{{b}_{j}}\}_{j=1}^{l}$ relatively $\{{{a}_{i}}\}_{i=1}^{l}$ yields a new element of form

\begin{equation} \label{tau}
\left(a_1 {{b}_{{{j}_{1}}}} {{a}_{2}} {{b}_{{{j}_{2}}}}\,...\,\,{{a}_{l}}{{b}_{{{j}_{l}}}} \right)
\end{equation}

At the same time, the squares of any element of form (\ref{tau}) are equal to $\pi$ which was mentioned above. Therefore, a number such elements are exactly $l$.

For instance, the permutation consisting  of  two 3-cycles $s=(123)(456)$ can be presented as square of the following 4 elements: $t_1=(142536)$, $t_2=(152634)$, $t_3=(162435)$, $t_4=(132)(465)$.

In other side by Lemma \ref{odd} each odd cycle has invariant length relatively squaring.
Thus in result of squaring the cycles of odd degree can be obtained as square of cycles of the same degree $l$ or squares of cycles of degree in 2 times grater. Other words in this case we have two cycles structures of root $[l^2]$ or $[(2l)^1]$.
\end{proof}

\begin{cor}\label{NumbRoot}
If ${{g}^{2}}$ contains ${{m}_{l}}$ odd $l$-cycles,  where ${{m}_{l}}=2k+1$ then the number ${{m}_{2l}}$  of $2l$ cycles in $g$ is bounded by $0\le {{m}_{l}}\le k$ the number of  $l$-cycles in $g$ is no less then 1. If  ${{g}^{2}}$ contains  ${{m}_{l}}$ even $l$-cycles,  where ${{m}_{l}}=2n$ and cycles are coprime, then cyclic structure of $g$ has exactly $n$ cycles of degree $2{{l}}$.
\end{cor}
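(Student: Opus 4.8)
The plan is to derive a single bookkeeping identity counting how the $l$-cycles of $g^2$ are produced from the cyclic structure of $g$, and then to read off both assertions as arithmetic consequences of its parity.

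First I would isolate the only two mechanisms by which an $l$-cycle can appear in $g^2$. By Lemma \ref{odd} an odd cycle keeps its length under squaring, so an odd $l$-cycle of $g$ yields exactly one $l$-cycle in $g^2$; this source is available only when $l$ is odd. By Lemma \ref{2l} a $2l$-cycle of $g$ squares to a unique pair of $l$-cycles, contributing two $l$-cycles in $g^2$, and this source is available for every $l$. Squaring any cycle of length $k$ produces either a $k$-cycle (if $k$ is odd) or a pair of $(k/2)$-cycles (if $k$ is even), so no cycle of $g$ other than an odd $l$-cycle or a $2l$-cycle can contribute an $l$-cycle, and the uniqueness clause of Lemma \ref{2l} guarantees there is no further hidden cyclic structure. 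Writing $a$ for the number of $l$-cycles of $g$ and keeping the notation $m_{2l}$ for the number of $2l$-cycles of $g$, I obtain the master relation $m_l = a + 2 m_{2l}$ when $l$ is odd, and $m_l = 2 m_{2l}$ when $l$ is even.

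For the first assertion I take $l$ odd with $m_l = 2k+1$. The identity $m_l = a + 2 m_{2l}$ forces $a \equiv m_l \equiv 1 \pmod 2$, hence $a \ge 1$, which is exactly the claim that $g$ has at least one $l$-cycle. Solving $a = 2k+1 - 2 m_{2l} \ge 1$ then gives $2 m_{2l} \le 2k$, i.e. $0 \le m_{2l} \le k$, the desired bound (the statement's $m_l$ in this inequality is a typo for $m_{2l}$). For the second assertion I take $l$ even with $m_l = 2n$. Since the odd-preservation source is unavailable for even $l$, every one of the $m_l$ many $l$-cycles of $g^2$ is half of a split $2l$-cycle, so $m_l = 2 m_{2l}$ and hence $m_{2l} = n$ exactly; the hypothesis that the cycles be mutually coprime (disjoint) is what lets me count the split pairs independently, without overlap.

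The only genuinely delicate point is establishing that the two enumerated sources are exhaustive and non-overlapping: I must rule out, for even $l$, any contribution to the $l$-cycles of $g^2$ arising from an even $l$-cycle of $g$ (which in fact splits rather than persists) or from a longer even cycle such as a $4l$-cycle (which splits into $2l$-cycles, not $l$-cycles). This is precisely where the parity dichotomy of Lemmas \ref{odd} and \ref{2l}, together with the disjointness hypothesis, does the work. Once the master relation is in place, both conclusions follow by routine parity and inequality manipulation.
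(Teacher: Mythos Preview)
Your proof is correct and follows essentially the same route as the paper, which simply states that the corollary is immediate from Proposition \ref{StrofRoot} and Lemma \ref{odd}. You have made the bookkeeping explicit via the master relation $m_l = a + 2m_{2l}$ (odd $l$) and $m_l = 2m_{2l}$ (even $l$), which is precisely what those two results yield; your use of Lemma \ref{2l} in place of Proposition \ref{StrofRoot} is harmless, since the relevant content of the proposition for this corollary is exactly the splitting of a $2l$-cycle into two $l$-cycles under squaring.
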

\begin{proof}
The proof immediate from Proposition \ref{StrofRoot} and Lemma \ref{odd}.
\end{proof}

\begin{exm}
  If ${{g}^{2}}$ has cyclic structure $[{{3}^{5}}{{,7}^{3}}]$ then  ${{m}_{3}}$ in  $g$ is bounded by: $1\le {{m}_{3}}\le 5$, ~${{m}_{6}}:$  $1\le {{m}_{6}}\le 2$ and ${{m}_{7}}:$  $1\le {{m}_{7}}\le 3$, ${{m}_{14}}:$   $0\le {{m}_{7}}\le 1$.
\end{exm}


\begin{thm}\label{assymCycl}
An arbitrary element $g$ of ${{A}_{n}}$ having cyclic structure \\ $[({2k})^{m_{2k}},{{(2r-2k)}}^{m_{2r-2k}}]$ can be presented in the form of a product of 2 squares of elements of ${{A}_{n}}$.
\end{thm}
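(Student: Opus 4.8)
The plan is to build the two factors $h,t$ explicitly from the cyclic data of $g$, using as the single engine the fact (Lemma~\ref{2l} and Proposition~\ref{StrofRoot}) that a pair of equal $l$-cycles is the square of one $2l$-cycle obtained by interleaving, together with the remark that disjoint squares multiply into a single square of the product of their roots. First I would record the parity constraint: since $2k$ and $2r-2k$ are both even, every cycle of $g$ is an odd permutation, so $g\in A_n$ forces $m_{2k}+m_{2r-2k}$ to be even; in particular $m_{2k}$ and $m_{2r-2k}$ are either both even or both odd.

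Next I would dispose of the clean case. Pairing the $2k$-cycles among themselves yields $m_{2k}/2$ disjoint $4k$-cycles, and the $(2r-2k)$-cycles yield $m_{2r-2k}/2$ disjoint $(4r-4k)$-cycles; let $H$ be the product of all of them, so that $H^{2}=g$ by Lemma~\ref{2l}. Each double-length cycle is odd, hence $H$ is even exactly when their number $(m_{2k}+m_{2r-2k})/2$ is even. Whenever this holds (with both counts even) we are finished with $h=H$, $t=e$: a single square already suffices.

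The remaining and genuinely harder situation is the \emph{parity defect}: $m_{2k},m_{2r-2k}$ both odd, or the count of double-cycles above odd. Here a single square is impossible, which is precisely the obstruction isolated in Theorem~\ref{Criterion} and Proposition~\ref{necessaryCondofSquare}, so the two squares must share support. The device I would use is to let one factor be a single cycle of \emph{odd} length spread over (almost) all points of the offending piece: such a cycle is an even permutation and is itself a square by Lemma~\ref{root}, and it can be arranged so that multiplying it off leaves an odd-length cycle, again a square. Concretely, for a leftover mixed pair $A\,B$ of one $2k$-cycle and one $(2r-2k)$-cycle this is the verified identity
\[ (1,2)(3,4,5,6) = (3,4,5,6,2)^{2}\,\bigl((1,2,6,5,4)^{3}\bigr)^{2}, \]
in which both factors lie in $A_6$; the general mixed pair is treated the same way on its $2r$ points. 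The clean paired part $H'$ (an even number of double-cycles, hence an even permutation) is disjoint from this gadget, so it folds into one factor via $g=(H'h_1)^{2}t_1^{2}$.

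The step I expect to be the main obstacle is exactly this parity bookkeeping together with the degenerate small-support configurations. One must guarantee that after inserting the overlapping gadget both assembled factors $H'h_1$ and $t_1$ are even; this is where the hypothesis of two distinct even lengths is essential, since an odd $4k$-cycle times an odd $(4r-4k)$-cycle is even, giving the freedom to balance parities. The truly delicate subcase is a lone pair of equal short even cycles with no room to maneuver (for instance $(1,2)(3,4)$ by itself, which fails in $A_4$): there one must borrow a moved point from a cycle of the other length, so the argument has to be phrased so that at least one extra moved point is always available — which the asymmetric two-length hypothesis supplies.
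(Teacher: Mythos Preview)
Your strategy differs substantially from the paper's, and the difference is exactly where your acknowledged ``parity bookkeeping'' gap becomes a real hole. You pair cycles of the \emph{same} even length via interleaving (Lemma~\ref{2l}), obtain a candidate root $H$ built from $4k$- and $(4r-4k)$-cycles, and then try to repair the parity of $H$ with a mixed-pair gadget. The claim that ``the clean paired part $H'$ (an even number of double-cycles, hence an even permutation)'' is not justified and is false in general: take $m_{2k}=1$, $m_{2r-2k}=3$. After one mixed gadget you are left with two $(2r-2k)$-cycles, whose interleaved root is a single $(4r-4k)$-cycle, an odd permutation; folding this $H'$ into either factor $(H'h_1)$ or $t_1$ destroys membership in $A_n$. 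Your proposed fix (``borrow a moved point from a cycle of the other length'') is not available here, since after the gadget no cycle of the other length remains. The same obstruction arises whenever $(m_{2k}+m_{2r-2k})/2$ is odd in the both-even case, e.g.\ $m_{2k}=2$, $m_{2r-2k}=4$. Your identity $(1,2)(3,4,5,6)=(3,4,5,6,2)^2\bigl((1,2,6,5,4)^3\bigr)^2$ is correct, but a single mixed gadget does not suffice; you would also need an analogous same-length gadget and a scheme for deciding how many of each to invoke.

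The paper sidesteps all of this by never forming even-length roots at all. It pairs one $2k$-cycle with one $(2r-2k)$-cycle (and, for any surplus, two cycles of the same even length, the symmetric case $r=2k$) and writes each such pair as a product of two \emph{odd}-length cycles sharing two points:
\[
(1,\dots,2k)(2k+1,\dots,2r)=(1,\dots,2k,2k+1)\cdot(1,2k+2,\dots,2r,2k+1),
\]
a $(2k+1)$-cycle times a $(2r-2k+1)$-cycle. Odd-length cycles are automatically even permutations and squares (Lemma~\ref{root}), so no parity defect ever arises. Collecting the first factors over all disjoint pairs gives one element of $S(A_n)$, and the second factors give another; commutativity across disjoint blocks yields $g=G_1G_2$ with $G_1,G_2\in S(A_n)$. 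This is both simpler and complete; if you want to salvage your route, the cleanest repair is to adopt this overlapping-odd-cycle decomposition for every pair (mixed or same-length) rather than interleaving.
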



\begin{proof}
First of all let us prove the theorem for the case $m_{2k}=m_{2l}=1$, precisely, $g$ consists of only two different even cycles of degrees $2k$ and $2r-2k$, $k, r \in \textbf{N}$.

Analogically we will be able by the same way to construct the arbitrary number of pairs different even cycles by generating them pair after pair. Also we can generate using $S(A_n)$ arbitrary number of odd cycles since the length of them is invariant while squaring according to Lemma \ref{odd}.



Consider an element $g$ having asymmetric cycle type structure consisting of two mutually coprime cycles of even length. Show that $g$ can be constructed in form of product  $g={{g}_{\text{1}}}\cdot {{g}_{\mathbf{2}}}$ with using  2 joint elements in cyclic representation of  ${{g}_{\text{1}}},\,\,\,{{g}_{\mathbf{2}}} \in A_n$.

\begin{align*}
g = (1, 2, \ldots, 2k-1, 2k) (2k+1, 2k+2, \ldots, 2r-1, 2r),
\end{align*}
where $k\geq 1$ and $r>k$. Then we choose the following permutations $g_1 g_2 \in S(A_n)$:
\begin{align*}
g_1 = (1, 2, \ldots, 2k-1, 2k, 2k+1),\\
g_2 = (1, 2k+2, 2k+3, \ldots, 2r-1, 2r, 2k+1).
\end{align*}
Therefore, we have
\begin{align*}
g_1\cdot g_2 = & (1, 2, \ldots, 2k-1, 2k, 2k+1) \cdot \\
 &(1, 2k+2, 2k+3, \ldots, 2r-1, 2r, 2k+1)=\\ &= (1, 2, \ldots, 2k-1, 2k) (2k+1, 2k+2, \ldots, 2r-1, 2r)=g.
\end{align*}
Note that both permutations $g_1, \, g_2 \in S(A_n)$ because they are presented as $2k + 1$-cycle and $2r-1$-cycle.
If we set $r=2k$ then this case realizes the symmetric cyclic structure $[2k,2k]$.

 This cyclic construction can be generalized.
Let $k$ be even number ($k=2t$, $r>k$). Then the following elements $g_1$, $g_2$ realize factorization of $g$ in even permutations that are presented by odd cycles.
\begin{align*}
  & {{g}_{\text{1}}}=\left( 1,\,\,\,\,\,\,\,\,\,\,...\,\,\,\,\,\,\,\,,k,\,2r \right), \,\, k=2t, \\
 & {{g}_{\mathbf{2}}}=\left( 1,\,\,2k+1,\,\,2k+2,\,\,...\,\,,\,\,2r-1,\,\,2r,\,k+1,\,\,...\,\,,\,2k\, \right), \\
&{g}_{\text{1}}{{g}_{\mathbf{2}}}=\left( 1,2,\,...,\,\,k, \, k+1,\,\,...\,\,,\,2k\, \right) \left(2k+1,\, 2k+2,\,\,2k+3,\,\,...\,\,,\,\,2r-1,\,\,2r\right).
 \end{align*}

Note that both permutations $g_1$ and $g_2$ are even because $k=2t$ therefore they have form of $k+1$-cycle and $2k-k +(2r-2k)+1=2r-k+1$ cycle. Now we mark this permutations $g_1$ and $g_2$ as $g_{11}$ and $g_{12}$. Now we denote $g_1$ by $g_{11}$, and $g_2$ by $g_{21}$.

Changing position of an transition element $2r$ we can construct new pairs of ${{g}_{1}},\,{{g}_{2}}$ which are squares of elements from ${{A}_{n}}$. Since second $g_2$ is $2r - k + 1$ cycle, where first two positions are reserved for elements of second even cycle in $g$.

Now we consider case $m_{2k} = m_{2l}>1$ assuming that first pair is already constructed above we choose the second pair of $2k$ and $2r$ cycles in $g$, which coprime with $g_1g_2$.

According to established above method an element consists of two
even cycles of different degrees can be generated by two odd cycles with joint elements. Consider the second pair of different
even cycles. This pair has empty mutual intersection with the firs pair.


The second pair of $2k$ and $2r$ cycles from cyclic decomposition $g$, permuting new elements can be constructed by utilizing multipliers $g_{21}$, $g_{22}$ by described above method.
 We construct them by the same way as first pair $g_{11}, \,g_{12}$ but on new set of elements having an empty intersection with the first set on which the already constructed substitutions $g_{11}, g_{12}$ act. Using induction on product of cycles pairs with structure $[(2k)^m,(2r)^m]$ we can construct permutation with cyclic structure $[(2k)^{m+1},(2r)^{m+1}]$.



\textbf{Another method of constructing }such permutations ${{g}_{1}},\,\,{{g}_{2}}\in S\left( {{A}_{n}} \right)$
gives the product with  cyclic structure
$[2k,\,\,2l]$, where $k\ne l$ are two pairs of permutations ($g_1$ and $g_2$) having even ranks.
 The first multiplier $g_1$ has rank $2k$ and the second has $2l$, their product has rank $2k+2l-2$.
We obtain such permutations ${{g}_{1}}$ and ${{g}_{2}}$  having  interlinked cycles which intersecting by an odd number of adjacent transpositions in permutation multiplication notation of their representation as a product of transpositions.
Thus in general form we have two elements consisting of even number of transpositions with one joint transposition (or with $2k+1$ joint transpositions). In our construction this transposition $\left( {{\alpha }_{2k}},\,{{\alpha }_{2k+1}} \right)$ is last in the  presentation of ${{g}_{1}}$ and first in the presentation of ${{g}_{2}}$.

\begin{align*}
  & {{g}_{1}}=\left( {{\alpha }_{1}},\,{{\alpha }_{2}} \right)\left( {{\alpha }_{2}},\,{{\alpha }_{3}} \right)...\left( {{\alpha }_{2k}},\,{{\alpha }_{2k+1}} \right) \\
 & {{g}_{2}}=\left( {{\alpha }_{2k}},\,{{\alpha }_{2k+1}} \right)\left( {{\alpha }_{2k+1}},\,{{\alpha }_{2k+2}} \right)\,\,...\,\left( {{\alpha }_{2k+2l-1}},\,{{\alpha }_{2k+2l}} \right),
\end{align*}
\begin{align*}
  & \pi =\left( {{\alpha }_{1}},\,{{\alpha }_{2}} \right)\left( {{\alpha }_{2}},\,{{\alpha }_{3}} \right)...\left( {{\alpha }_{2k-1}},\,{{\alpha }_{2k}} \right)\left( {{\alpha }_{2k+1}},\,{{\alpha }_{2k+2}} \right)\,\,...\,\left( {{\alpha }_{2k+2l-1}},\,{{\alpha }_{2k+2l}} \right)= \\
 & =\left( \,{{\alpha }_{2k}},{{\alpha }_{2k-1}},,...,{{\alpha }_{3}},{{\alpha }_{2}},{{\alpha }_{1}} \right)\left( \,{{\alpha }_{2k+2l}},{{\alpha }_{2k+2l-1}},\,\,...\,\,,{{\alpha }_{2k+2}},{{\alpha }_{2k+1}} \right).
\end{align*}

Note that either condition a) from item 1 or condition 2) of Theorem \ref{Criterion} for these permutations will be satisfied depending on the value of $k$. Therefore ${{g}_{1}},\,\,{{g}_{2}}\in S\left( {{A}_{n}} \right)$ by Theorem \ref{Criterion}. If one imposes additional conditions $2k\equiv 0\left( \bmod 4 \right)$ and $2t\equiv 0\left( \bmod 4 \right)$ then ${{g}_{1}},\,\,{{g}_{2}}\in S\left( {{A}_{n}} \right)$ by item 2.
 Thus, permutation $\pi $ having cyclic structure $[2k,\,2l]$ are presented in form of product of 2 squares.

To show the that arbitrary $2k$-cycle and $2l$-cycle as permutation can be constructed in such a way we recall definition of \textbf{permutation decrement} and recall a result about \textbf{rank of cycle}.

Let $k(\pi )$ be the number of cycles in decomposition of $\pi $.
We call the number $n-k(\pi )$ the \textbf{permutation decrement} $\pi $ and denote this  by $dec(\pi )$.

As well known \cite{Sachk} that the rank of $\pi $  is equal to $dec(\pi )$. Therefore the rank of $n$-cycle is $n-1$. Thus, using $2k-1$ non-coprime and non-equal transpositions of form: $\left( {{\alpha }_{i}},\,{{\alpha }_{i+1}} \right)\left( {{\alpha }_{i+1}},\,{{\alpha }_{i+2}} \right) ... $ we express arbitrary $2k$-cycle.

The alternating proof is completed.

Consider an examples. For instance the elements of even ranks 2 and 6.
${{h}_{1}}=\left( 6,8 \right)\left( 8,7 \right)$,  ${{h}_{2}}=\left( 7,8 \right)  \left( 8, 9 \right)\left( 9,10 \right)\left( 10,11 \right)\left( 11,12 \right)\left( 12,13 \right)\in {{A}_{13}}$ then their product is square:

\begin{align*}
  & g={{h}_{1}}{{h}_{2}}=\left( 6,8 \right)\left( 8,9 \right)\left( 9,10 \right)\left( 10,11 \right)\left( 11,12 \right)\left( 12,13 \right)= \\
 & =\left( 13,12,11,10,9,8,6 \right)\in S({{A}_{13}}). \\
\end{align*}

Note that ${{h}_{2}}=\left( 11,10,9,8,7,12,13 \right)\in S({{A}_{13}})$, ${{h}_{1}},\,\,{{h}_{2}}\in S({{A}_{13}})$, thus constructed in a such way elements are additionally closed by multiplication.

Let us consider an example of constructing of asymmetric cycle type $[2, 6]$ permutation that is in $S({{A}_{13}})$
${{h}_{1}}=\left( 6,7 \right)\left( 8,7 \right)=\left( 6,8,7 \right)$, \\ ${{h}_{2}}=\left( 7,8 \right)\left( 8,9 \right)\left( 9,10 \right)\left( 10,11 \right)\left( 11,12 \right)\left( 12,13 \right)\in {{A}_{13}}$ then their product is square

\begin{align*}
  & g={{h}_{1}}{{h}_{2}}=\left( 6,7 \right)\left( 8,9 \right)\left( 9,10 \right)\left( 10,11 \right)\left( 11,12 \right)\left( 12,13 \right)= \\
 & =(6,7)\left( 13,12,11,10,9,8 \right)\in S({{A}_{13}}). \\
\end{align*}

Note that ${{h}_{2}}=\left( 11,10,9,8,7,12,13 \right)\in S({{A}_{13}})$, ${{h}_{1}},\,\,{{h}_{2}}\in S({{A}_{13}})$, thus constructed in a such way elements are additionally closed by multiplication.

Let us formulate this condition of expressing by 2 squares in terms of generating set of $A_n$.

We recall Mitsuhashi's generating set $M$ is the set of form:  $(1,2)( i,i+1)$, where  $i>1$.
The first cycle having rank $2k$ we have presented in form of product $2k$ transpositions. We can express product of  2 transpositions  by 2 elements  conjugated to elements  of  Mitsuhashi's generating set.
Therefore we need to investigate  structure of  arbitrary such a pair in terms of generators from $M$. Let we need to express  
 the pair $\left( j,\,\,t \right)\left( t,s \right)$ that is in presentation of asymmetric cycles.
To remove symbol $j+1$ by arbitrary $t \in N$: $j<t\le n$ we make $t-j-1$ conjugations of form:
For this we goal consider the pair $\left( 1,2 \right)\left( j,j+1 \right)\in M$. Conjugations by generator $(1,2)(j+1,j+2)$  from $M$ lead us to the following  element on 1 closer to the required element
$\left( 1,2 \right)\left( j+1,j+2 \right)\left( 1,2 \right)\left( j,j+1 \right)\left( 1,2 \right)\left( j+1,\ j+2 \right)=\left( 1,2 \right)\left( j,j+2 \right)$. Finally after $n_{t,j+1}= t-j-1$ conjugations we obtain required element $\left( 1,2 \right)\left( j,\,\,t \right)$.

To generate $2l+1$-cycle it is enough to use $2l$ transpositions. But for each of this transitions of form $(j, t)$ we need to make $n_{t,j+1}= \mid j-t \mid$ conjugations. Thus rank of  $2l+1$-cycle is $2l$. For expressing these transpositions we have to use $ \sum_{i=1} ^{2l}  2n_{t,j+1} + 2l$ generators form $M$.

Thus, for expressing these 2 transpositions  from two generators from $M$ we have used an even number $(2+2(t-j-1))$  of generators from $M$. The form of word is conjugation by of $\left( 1,2 \right)\left( j,j+1 \right)$ by $(2+2(t-j-1))$ generators.

In exactly the same way we can get the second pair of transpositions that is $\left( 1,2 \right)\left( t,\,\,s \right)$. Further, multiplying this  elements  $\left( 1,2 \right)\left( j,\,\,\,t \right)$ and  $\left( 1,2 \right)\left( t,\,\,s \right)$ we have $\left( j,\,\,t \right)\left( t,s \right)=\left( j,\, s, \,\, t \right)$.


\textbf{Remark}. If the necessary condition of Theorem 6 (about ${{m}_{2k}}\equiv 0\left( \bmod 2 \right)$) is satisfied  then condition 2 of Theorem 6 can be formulated it the next form: If there is presentation of $\pi $ in form of unreducible word in alphabet M, with a partition into four disjoint subsets (in general case $4t$ disjoint subsets) of consecutive transpositions in each of which the sum of the degrees of the generators is $2k$-1  i.e.  rank$(C_{2k})$ = $2k$-1 and as a result the degree of the generators is a multiple of 4 in whole $\pi $, then the permutation $\pi $ is  square in ${{A}_{n}}$.

Thus we can reformulate condition 2) of Theorem 6 \textbf{in terms of Mitsuhashi's generating set} $M$. If an element $g\in A_n$ is a product of transposition from $M$ satisfying formulated \textbf{above Remark}, then we shall see that $g$ is a \textbf{square in} $A_n$.

Idea of proof in the following.
 As well known a number of conjugating transpositions is equal to a number of inversions in the cycle $\left( {{i}_{1}},{{i}_{2}},....,{{i}_{k}} \right)$ relative to the base ordered cycle $\left( 1,2,...,k \right)$.

According to Kelly's theorem, each  $n$-cycle can be obtained in ${{n}^{n-2}}$ ways through the product of $n-1$ transposition. One of the well-known ways of representing the cycle $\left( 1,2,...,k \right)$ is a product of form \[\left( 1,k \right)\left( k,k-1 \right)\left( k-2,k-\,3 \right)...\left( 2,3 \right)=(1,\,\,...\,\,,k)\].

Thus, the rank of unreducible product of element satisfying condition 2 of Theorem 6 is $4(2k-1)$.
\end{proof}


This Theorem immediately entails the following more general Theorem.

\begin{thm}\label{Prod}
 {\sl The verbal width of $V_{S(A_n)}(A_n) = A_n$ is 2, where $n>3$.} 
\end{thm}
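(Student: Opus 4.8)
The plan is to pin the verbal width at exactly $2$ by establishing two matching bounds: that every $g\in A_n$ is a product of at most two squares, and that at least one element of $A_n$ is not itself a single square. The equality $V_{S(A_n)}(A_n)=A_n$ I would take from the verbal simplicity established above (Property \ref{closedness}): for $n>4$ the group $A_n$ is simple and verbally simple, so the verbal subgroup generated by the squares is the whole group; for $n=4$ one notes directly that the square of a $3$-cycle is again a $3$-cycle and that the $3$-cycles generate $A_4$. Thus the statement $V_{S(A_n)}(A_n)=A_n$ needs no further work and the content is the width.

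For the upper bound I would reduce an arbitrary $g$ to its disjoint cycle decomposition and treat odd and even cycles separately. Write $g=\sigma\cdot\rho$, where $\sigma$ is the product of all odd-length cycles and $\rho$ the product of all even-length cycles, acting on disjoint sets of points. Each odd $k$-cycle $c$ is already a square, namely $c=(c^{(k+1)/2})^2$ by Lemma \ref{root}; since the odd cycles commute pairwise, their product is a single square $\sigma=O^2$. Because $g$ is even and every even-length cycle is an odd permutation, the number of even cycles is even, so the cycles in $\rho$ can be grouped into pairs. Each pair consists of two even cycles and, by the base case $m_{2k}=m_{2l}=1$ of Theorem \ref{assymCycl} (whose explicit construction applies whether or not the two cycles have equal length), is a product of two squares supported on that pair's points.

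The decisive step is then to collapse all of these square factors into exactly two. Writing the $i$-th pair as $a_i^2 b_i^2$ with $a_i,b_i$ supported on that pair, distinct pairs have disjoint supports, so the various $a_i$ commute with one another and likewise the $b_i$; hence $\rho=\bigl(\prod_i a_i\bigr)^2\bigl(\prod_i b_i\bigr)^2=A^2B^2$ with $A=\prod_i a_i$ and $B=\prod_i b_i$. Finally $O$ is supported on the odd-cycle points while $A$ is supported on the even-cycle points, so their supports are disjoint and $O^2A^2=(OA)^2$, giving $g=O^2A^2B^2=(OA)^2B^2$, a product of two squares. The degenerate cases are harmless: with no even cycles $g=O^2$, and with no odd cycles $g=A^2B^2$.

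For the lower bound I would exhibit one element that fails the squareness criterion of Theorem \ref{Criterion}; for instance $(1,2)(3,4,5,6)\in A_6$ has $m_2=m_4=1$, so an even cycle occurs an odd number of times and the element is not a single square, while the argument above (or Theorem \ref{assymCycl} directly) shows it is a product of two squares. Hence the width exceeds $1$, and together with the upper bound it equals $2$. The main obstacle I anticipate is not any single construction but the careful bookkeeping of supports: one must verify that the square roots produced for different cycle-pairs genuinely act on disjoint point sets and that the odd-cycle square root is disjoint from the even-cycle data, since it is exactly this disjointness, and the resulting commutativity, that lets a product of many two-square expressions be rewritten as one product of just two squares.
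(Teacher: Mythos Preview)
Your argument is correct and follows essentially the same strategy as the paper: odd cycles are squares by Lemma~\ref{root}, pairs of even cycles are products of two squares by the base case of Theorem~\ref{assymCycl}, and everything is assembled via disjoint supports. Where you add value is in making explicit the step the paper leaves implicit, namely that the square roots produced for distinct cycle-pairs (and for the odd part) act on pairwise disjoint point sets, so the many local $a_i^2b_i^2$ factorizations commute and collapse into a single global $(OA)^2B^2$. That bookkeeping is exactly the heart of the proof, and you have stated it cleanly.

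One small gap to patch: your lower-bound witness $(1,2)(3,4,5,6)$ only exists for $n\ge 6$. For $n=4,5$ use $(1,2)(3,4)$ instead; there $|Fix(g)|\le 1$, there are no nontrivial odd cycles, and $p_2=1$ is odd, so Theorem~\ref{Criterion} (or the direct observation that any square root would have order~$4$, impossible in $A_4$ or $A_5$) shows it is not a square, while your pairing construction still writes it as a product of two $3$-cycles. With that adjustment the argument covers all $n>3$.
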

According to proved above statements the cycle structure of elements $g\in A_n$ constructed by us
 can be divided into three types of cycles. The first type consists of pairs of cycles of the same even degree  mentioned in Lemma \ref{2l}.
This way is based on squaring of $2l$-cycles described in Lemma \ref{2l}. Also it can be reached by using Theorem \ref{assymCycl}.
The second type is pairs of cycles of the different even degrees arising from product of non mutually coprime cycles which described in Theorem \ref{assymCycl}. Also the third type is arbitrary number of cycles of odd degree that were established in Lemma \ref{odd} and Lemma \ref{2l} about square of $2l$ cycles, when $l \equiv 1 (mod 2)$.


According to Theorem \ref{assymCycl}, Proposition \ref{necessaryCondofSquare} and Lemmas \ref{odd}, \ref{2l} there are no other types of cycle structures of element.

  On the base of just mentioned structure we can established the following.
The element with $[(2k_0)^{l_{2k_0}}, (2m_0)^{l_{2k_0}}], \ldots, [(2k_j)^{l_{2k_j}}, (2m_j)^{l_{2k_j}}]$ can be constructed due to successively applying the procedure described in the proof of Theorem \ref{assymCycl}.

We denote by ${{C}_{ij}}\left( \pi  \right)$ a pair of cycles of length $i$ of type $j$ from presentation of permutation $\pi $, $1\le j\le 2$, for such $j$ a lenght $i$ is even. If  $j=3$ then ${{C}_{ij}}\left( \pi  \right)$ is one odd cycle from $\pi \in {{A}_{n}}$. So $i$ is odd for $j=3$.
A number  of  ${{C}_{ij}}\left( \pi  \right)$ in cyclic decomposition of $\pi $ is denoted by  ${{v}_{ij}}\left( \pi  \right)$.
Let $\pi \in {{A}_{n}}$ consists of ${k_{1,2}}$ cyclic multipliers of the types 1 and 2 and $k_3$ multipliers of third type.  Such cyclic decomposition  of  $\pi \in {{A}_{n}}$ satisfies to the equality
 $$\sum\limits_{i=1, \, 1<j<3}^{k_{1,2}}{2i{{v}_{ij}}\left( \pi  \right)}+ \sum\limits_{i=1}^{k_3}{i{{v}_{i3}}\left( \pi  \right)} =n.$$


We can construct an arbitrary many cycles of odd degree utilizing Lemma \ref
{odd} or Lemma \ref{2l} by induction on different support for each cycle.

In an arbitrary $ \pi \in A_n$ there are no other cycles besides of these three types
because even permutation $ \pi$ can contain only even number of even cycles \cite{Seg}. Therefore a set of cycles with even degree consist of pairs with cyclic structure $[2k, 2m]$, where $k,m \in  \mathbb{N}$ (asymmetric pairs) and pairs of even cycles with the same degrees (symmetric pairs).

These symmetric and asymmetric pairs are completely exhausted by the aforementioned $C_{i1}$ and $C_{i2}$.
  All cycles of odd degree contains in type 3 ($C_{i3}$) and can be constructed by Lemma \ref{odd} or Lemma \ref{2l} as elements with verbal width 1.

The equality $V_S(A_n) = A_n$ was established in Property \ref{closedness}.
Thus, an arbitrary element of $A_n$ can be presented in form of a product of 2 squares of elements from $A_n$.

\begin{cor}
An arbitrary element $g$ of ${{A}_{n}}$ having cyclic structure $[({2k})^{},({2r})^{}]$ can be presented in form of a product of 2 squares by $2k+2r-2$ ways.
\end{cor}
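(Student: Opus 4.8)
The plan is to upgrade the existence statement of Theorem~\ref{assymCycl} into an exact count by tracking the degrees of freedom in its constructive proof. Write $g = A\cdot B$ with $A=(a_1,\dots,a_{2k})$ and $B=(b_1,\dots,b_{2r})$ the two mutually coprime even cycles. The construction of Theorem~\ref{assymCycl} produces $g=g_1g_2$ in which $g_1$ is a $(2k+1)$-cycle and $g_2$ a $(2r+1)$-cycle, each of odd length and hence a square by Lemma~\ref{root}; the two factors share exactly two ``transition'' points, one borrowed from $A$ and one from $B$. First I would isolate these two shared points as the only data on which the ordered pair $(g_1,g_2)$ depends, after quotienting out the cyclic rotation ambiguity of each written cycle, which does not change the factor as a permutation.

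Next I would parametrise all such representations by the position of a single movable transition element relative to a fixed basepoint. Sliding this element along the support of $A$ yields $2k$ raw configurations and sliding it along the support of $B$ yields $2r$ raw configurations, so that $2k+2r$ placements are produced in total. I would then show that exactly two of these placements, namely the two in which the movable element lands on the junction already occupied by the fixed transition point, produce coincident pairs $(g_1,g_2)$ and must therefore be identified. This leaves precisely $2k+2r-2$ distinct representations, which is the asserted number.

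The core of the argument, and the step I expect to be the main obstacle, is the injectivity and no-over-counting analysis: proving that two distinct admissible placements never yield the same ordered pair $(g_1,g_2)$, and that the only coincidences are exactly the two boundary placements, neither more nor fewer. This reduces to a careful bookkeeping of how a shift threads the remaining points of $B$ through $g_2$ and the remaining points of $A$ through $g_1$, relying on the uniqueness-of-cycle-structure argument already used in Proposition~\ref{StrofRoot} and Lemma~\ref{odd}. The surjectivity direction, that every product-of-two-squares representation of the prescribed cyclic type arises from some placement, I would obtain by reversing the construction: from such a pair the two shared points recover the transition data uniquely.

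Finally I would record two consistency checks to guard against an off-by-a-constant error. Specialising to equal cycle lengths $k=r$ must be compatible with the shift count of Proposition~\ref{StrofRoot}, and the total $2k+2r-2$ must agree with $rnk(g)=dec(g)=(2k-1)+(2r-1)$, the decrement of $g$ computed from its two cycles. This numerical coincidence with the rank is a convenient sanity check, even though the bijection itself is with transition placements rather than with transposition factorisations.
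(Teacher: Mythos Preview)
Your approach is essentially the paper's own: fix a basepoint (the paper fixes $\beta_1$), let a single transition element $t$ vary relative to it, and read off the number of admissible placements as $2k+2r-2$. The paper's proof is considerably terser than your plan---it simply asserts that ``a number of such position is $2k+2r-2$'' and displays the generic form of $(g_1,g_2)$, without the injectivity/over-counting analysis or the sanity checks you propose.

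One caution: you frame the result as an \emph{exact} count and sketch a surjectivity step (``every product-of-two-squares representation of the prescribed cyclic type arises from some placement''). The paper does not attempt this; it is only counting the pairs $(g_1,g_2)$ produced by the specific splicing construction of Theorem~\ref{assymCycl}, not all decompositions of $g$ into two squares. Your surjectivity claim, even restricted to factors of the given odd-cycle types, would require ruling out factorisations not of the splice form, and neither your outline nor the paper supplies that argument. If you intend to match the paper, drop the surjectivity direction; if you intend a genuine exact count, that step needs real work beyond reversing the construction.
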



\begin{proof}
We count the number of different permutations of the type ${{g}_{1}}$ and ${{g}_{2}}$ consisting from product of mutually coprime cycles of length $2k$ and $2l$. Let ${{\beta }_{1}}$ is on first position because of its position is not decisive for sequence of elements in cycle. 
We can variate position of transition element $t$ relatively to the element ${{\beta }_{1}}$ position changing the sequence in the original cycles presenting $g_1$ and $g_2$. A number of such position is $2k+2r-2$. We set $n,m \equiv 0 (mod 2)$.
At the same time, the sequence in the original cycle of the supplied elements
\begin{align*}
  & {{g}_{\text{1}}}=\left( {{\beta }_{1}},\,\,\,\,\,\,\,...\,,\,\,\,\,\,{{\beta }_{m}},\,\,\,\,\,\,\,t,\,\,\,\,{{\alpha   }_{1}},\,{{\alpha }_{2}},\,\,...,\,\,{{\alpha }_{n}}\,\, \right) \\
 & {{g}_{\mathbf{2}}}=\left( {{\beta }_{1}},\,{{\alpha }_{n+1}},\,\,\,....\,,\,\,\,\,{{\alpha  }_{2k-1}},\,\,\,\,\,\,t,\,\,{{\beta }_{m+1}},\,.\,...\,,\,\,{{\beta }_{2r-1}},\,\,{{\beta }_{2r}} \right) \\
\end{align*}

Note $g_1, g_2$ are cycles of odd degree so $g_1, g_2 \in S(A_n)$ by Theorem \ref{Criterion} and Lemma \ref{odd}.

\begin{multline*}
\quad{{g}_{1}}{{g}_{\mathbf{2}}}=\left( {{\beta }_{1}},\,...\,\,,\,\,\,\,\,{{\beta }_{m}},\,\,\,{{\beta }_{m+1}},\,.\,...\,,\,\,{{\beta }_{2r-1}},\,\,{{\beta }_{2r}} \right) \\
\left( \,t,\,\,{{\alpha }_{1}},\,{{\alpha }_{2}},\,\,...,\,\,{{\alpha }_{n}},{{\alpha }_{n+1}},\,\,\,....\,,\,\,\,\,{{\alpha }_{2k-1}} \right).
\end{multline*}
This  completes the proof.
\end{proof}



\section{The verbal width of Mathieu groups}
Taking in account some exceptional isomorphism $PSL_2(F_9) \simeq A_6$ as well as $PSL_2(F_4)\simeq A_5$ \cite{Doc} we obtain the verbal width by set $S(PSL_2(F_9))$ of $PSL_2(F_9)$ is 2. This yields the \textbf{assumption} that verbal width by set $S(PSL_2(F_{p^n}))$ of $PSL_2(F_{p^n})$ is 2.

We denote by $H_{j}$ the subgroup generated by $S(M_{j})$.

\begin{prop}  The following Mathieu groups are not generated by their sets of squares: $M_8, \, M_9, \, M_{10}$. Other Mathieu groups are generated of by their sets of squares. The diameter of  $M_8, \, M_9, \, M_{10}$ over $\textbf{S} (M_8), \, \textbf{S} (M_9), \, \textbf{S} (M_{10})$ is $\infty$.
\end{prop}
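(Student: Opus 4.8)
The plan is to reduce the entire statement to one elementary criterion: a finite group $G$ is generated by its set of squares $S(G)$ if and only if the abelianization $G/[G,G]$ has odd order. First I would observe that the subgroup $\langle S(G)\rangle$ generated by all squares is a verbal subgroup, hence fully invariant and in particular normal (as already noted for $A_n$ in the preceding material). The quotient $G/\langle S(G)\rangle$ has every element of order dividing $2$, so it is an elementary abelian $2$-group; being abelian it factors through $G/[G,G]$, and in fact it is exactly $(G/[G,G])/2(G/[G,G])$. Consequently $\langle S(G)\rangle = G$ precisely when $G/[G,G]$ is $2$-divisible, i.e. when $|G/[G,G]|$ is odd.

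With this criterion in hand, the simple Mathieu groups $M_{11},M_{12},M_{22},M_{23},M_{24}$ are settled at once: each is a non-abelian finite simple group, hence perfect, so $G/[G,G]$ is trivial and of odd order, and therefore each is generated by its squares. For $M_8$, which is the quaternion group $Q_8$, I would argue directly: the only squares in $Q_8$ are $1$ and the central involution, so $\langle S(Q_8)\rangle$ equals the centre $Z(Q_8)\cong \mathbb{Z}/2\mathbb{Z}$, a proper subgroup; equivalently $Q_8/[Q_8,Q_8]\cong(\mathbb{Z}/2\mathbb{Z})^2$ has even order.

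The two remaining cases require computing abelianizations from the known extension structure. For $M_{10}$ I would use that it is a (non-split) extension of the simple group $A_6$ by $\mathbb{Z}/2\mathbb{Z}$: since $A_6$ is perfect and $M_{10}/A_6\cong\mathbb{Z}/2\mathbb{Z}$ is abelian, one gets $[M_{10},M_{10}]=A_6$, whence $M_{10}/[M_{10},M_{10}]\cong\mathbb{Z}/2\mathbb{Z}$ is of even order. For $M_9\cong 3^2{:}Q_8$ I would use that $Q_8$ acts on the normal subgroup $3^2$ without nonzero fixed points, so $[3^2,Q_8]=3^2$ lies in the derived subgroup; then modulo $3^2$ the quotient is $Q_8$, whose abelianization is $(\mathbb{Z}/2\mathbb{Z})^2$, giving $M_9/[M_9,M_9]\cong(\mathbb{Z}/2\mathbb{Z})^2$ of even order. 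In each of these three cases $\langle S(G)\rangle$ is a proper normal subgroup, and since products of squares, together with their inverses $(g^2)^{-1}=(g^{-1})^2$, never leave $\langle S(G)\rangle$, no element of $G\setminus\langle S(G)\rangle$ can be written as a product of any number of squares; by the stated definition of diameter the value is therefore $\infty$ for $M_8,M_9,M_{10}$.

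The main obstacle I anticipate is not the group theory behind the criterion, which is routine, but pinning down the precise isomorphism types and extension data of the small Mathieu groups $M_8,M_9,M_{10}$ — in particular verifying that $Q_8$ acts fixed-point-freely on $3^2$ inside $M_9$, and that the relevant extension realizing $M_{10}$ genuinely has $A_6$ as its derived subgroup. Once these structural facts are fixed, the abelianization computations and the diameter conclusion follow formally from the criterion.
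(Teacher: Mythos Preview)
Your argument is correct and is genuinely more conceptual than the paper's. The paper establishes the failure for $M_8,M_9,M_{10}$ purely by computer calculation in GAP (``The fact of the non equalities \ldots was established by us due to computation on CAP system''), and for the remaining Mathieu groups it appeals only to simplicity: the verbal subgroup $\langle S(G)\rangle$ is normal, hence equals $G$ when $G$ is simple. Your route replaces the machine check by the uniform criterion $\langle S(G)\rangle=G \iff |G^{\mathrm{ab}}|$ is odd, and then reads off the abelianizations of $Q_8$, $3^2{:}Q_8$, and $A_6.2$ from their extension structure; for the simple Mathieu groups both approaches coincide, since ``perfect'' is just the special case $|G^{\mathrm{ab}}|=1$.

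What each approach buys: the paper's proof needs no structural information about $M_8,M_9,M_{10}$ beyond a presentation suitable for GAP, but it is opaque. Your proof is transparent and computation-free, at the price of needing the extension descriptions $M_8\cong Q_8$, $M_9\cong 3^2{:}Q_8$ with fixed-point-free $Q_8$-action, and $M_{10}\cong A_6.2$; these are standard and your justification of the fixed-point-freeness (elements of order $4$ in $SL_2(\mathbb F_3)$ have no eigenvalue $1$, and the central involution is $-I$) is exactly what is required. One small omission: the paper's ``other Mathieu groups'' also include $M_{20}\cong 2^4{:}A_5$ and $M_{21}\cong PSL_3(4)$; the latter is simple and falls to your argument immediately, while for $M_{20}$ your criterion still applies once you note that $A_5$ acts nontrivially (indeed irreducibly) on $2^4$, so $[2^4,A_5]=2^4$ and $M_{20}$ is perfect.
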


\begin{proof}
Firstly we recall exceptional isomorphism ${{M}_{9}}\simeq PS{{U}_{3}}(2)$ \cite{Doc}.
The fact of the non equalities: $S(M_8)\neq M_8,  S(M_9)\neq M_9, S(M_{10}) \neq M_{10}$ was established by us due to computation on  CAP system.
  All other prime Mathieu groups $M_{11}, \, M_{12}, \, M_{21}, \, M_{23}, \, M_{24}$ are generated by squares.

Because of if we assume $H_{11}$, $H_{12}$,  $H_{21}$, $H_{23}$, $H_{24}$  are proper subgroups in the correspondent group $M_{11}, M_{12},  M_{21},  M_{23}, M_{24}$ then it will be proper normal subgroup which is contradiction.
  Because of in oppositive case the subgroup ${{G}^{2}}$ is proper normal subgroup in $M_{12}$. And the same is true for other prime Mathieu groups.
Indeed ${{M}_{12}}\simeq PS{{L}_{2}}({{F}_{11}})$ that's why $M_{12}$ is prime as $PS{{L}_{2}}({{F}_{11}})$. As well known ${{M}_{12}}\simeq PS{{L}_{2}}({{F}_{11}})$ and  ${{M}_{21}}\simeq PSL_3(F_4)$, gives us the same way to result $vw(({M}_{12}, S({M}_{12})))=2$.
\end{proof}

\begin{prop}
The verbal width of $V_{S(M_i)}(M_i)$ for $8 \leq i \leq 10$ equals to 1.
\end{prop}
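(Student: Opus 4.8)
The plan is to show that for each $i\in\{8,9,10\}$ the set of squares $S(M_i)$ is already closed under multiplication, hence is a subgroup. Once this is established, the verbal subgroup $V_{S(M_i)}(M_i)=\langle S(M_i)\rangle$ coincides with $S(M_i)$ itself: indeed $S(M_i)$ automatically contains the identity and is closed under inversion, since $(g^2)^{-1}=(g^{-1})^2$, so closure under multiplication makes it a subgroup. Then every element of $V_{S(M_i)}(M_i)$ is a single square and the verbal width equals $1$. This is also consistent with the preceding Proposition: each of these $S(M_i)$ is a \emph{proper} subgroup, which is exactly why the diameter of $M_i$ by $S(M_i)$ is $\infty$.

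I would treat the three groups through their standard structure as members of the small Mathieu chain. First, $M_8\cong Q_8$, and a direct inspection gives $S(Q_8)=Z(Q_8)=\{\pm1\}$, a cyclic group of order $2$, so the claim is immediate. Next, writing $M_9\cong 3^2\rtimes Q_8$ (equivalently $PSU_3(2)$), the $Q_8$-component of any square lies in $Z(Q_8)=\{\pm1\}$; computing $(v,q)^2=((1+q)v,\,q^2)$ one checks that for $q=1$ the first coordinate sweeps out all of $3^2$, that for $q=-1$ it collapses to $0$, and that for an element $q$ of order $4$ the operator $1+q$ is invertible on $\mathbb{F}_3^2$ (since $-1$ is not an eigenvalue of an order-$4$ element over $\mathbb{F}_3$), so the squares fill the whole coset with $Q_8$-part $-1$ as well. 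Hence $S(M_9)=3^2\rtimes\langle-1\rangle$, the normal subgroup of order $18$, which is visibly closed.

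The delicate case is $M_{10}\cong A_6.2$. Here every square lands in the index-$2$ subgroup $A_6$ (squaring kills the quotient $C_2$), so $V_{S(M_{10})}(M_{10})\subseteq A_6$; since $A_6$ is simple and its squares generate a nontrivial normal subgroup, we get $V_{S(M_{10})}(M_{10})=A_6$. It then remains to prove the sharper equality $S(M_{10})=A_6$, i.e. that every element of $A_6$ is a square in $M_{10}$. By Theorem \ref{Criterion} the only $A_6$-class failing to be a square inside $A_6$ is the class of cycle type $[4,2]$ (for which $m_4=1$ is odd); all remaining classes are squares already within $A_6\subseteq M_{10}$. These $[4,2]$-elements are precisely the order-$4$ elements of $A_6$ and form a single $A_6$-class. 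The main obstacle, and the heart of the argument, is to supply their square roots from outside $A_6$: I would exhibit an outer element $x\in M_{10}\setminus A_6$ of order $8$, whence $x^2$ is an order-$4$ element of $A_6$, necessarily of type $[4,2]$. Because squaring is conjugation-equivariant and $[4,2]$ is a single $A_6$-class, conjugating $x$ by $A_6$ realizes every $[4,2]$-element as a square. Combining the two sources of roots yields $S(M_{10})=A_6$, a subgroup, so the verbal width is $1$. The existence of such an order-$8$ outer element and the cycle type of its square is the one point best confirmed by the CAP/GAP computation already invoked in the preceding Proposition.
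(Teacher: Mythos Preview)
Your argument is correct and, in fact, gives a genuine proof where the paper offers essentially none. The paragraph following the proposition in the paper merely records the structures $M_9=(C_3\times C_3)\rtimes Q_8$, $H_9=(C_3\times C_3)\rtimes C_2$, $M_{10}=A_6\cdot 2$ and then, confusingly, asserts that the verbal width of $H_9$ is ``at least $2$'' and ``not greater than $3$'', which contradicts the stated value~$1$; no argument for $M_8$ or $M_{10}$ is given at all. So your approach is not just different, it is the only actual proof on offer.

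Two minor remarks strengthen your write-up. First, your one acknowledged loose end for $M_{10}$---the existence of an order-$8$ element outside $A_6$---can be settled without machine computation: $|M_{10}|=720=2^4\cdot 45$, so a Sylow $2$-subgroup has order~$16$ (it is in fact semidihedral) and hence contains an element of order~$8$; since a Sylow $2$-subgroup of $A_6$ has order~$8$ and is dihedral (exponent~$4$), any order-$8$ element necessarily lies in $M_{10}\setminus A_6$. Second, in the $M_9$ step your invertibility claim for $1+q$ when $q$ has order~$4$ is correct for the specific reason that in the faithful action $Q_8\hookrightarrow SL_2(\mathbb{F}_3)$ an order-$4$ element has characteristic polynomial $x^2+1$, which is irreducible over $\mathbb{F}_3$, so $-1$ cannot be an eigenvalue; it is worth saying this explicitly rather than appealing to ``order-$4$ eigenvalues'' in the abstract.
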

The structure of correspondent group $M_9= (C_3 \times C_3) \rtimes Q_8$ and subgroup $H_9= (C_3 \times C_3) \rtimes C_2$ give us way to find and understand a value of verbal width of $H_9$. Indeed $(C_3 \times C_3) \rtimes C_2$ has two element minimal generating set which can be presented as squares. Thus verbal width at least is 2. Since every commutator can be  generated no more than three squares of group elements then it can not be grater then 3. Further, $M_{10}= A_6*C_2$.

\begin{thm} The verbal width of $V_{S(M_i)}(M_i)$ in $M_{11}, \,  M_{12}$, $M_{20}$, till $M_{24}$ is equal to 2.
\end{thm}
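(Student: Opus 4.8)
The plan is to separate the list $M_{11},M_{12},M_{20},M_{21},M_{22},M_{23},M_{24}$ into the simple Mathieu groups, to which the Liebeck--O'Brien--Shalev--Tiep theorem applies verbatim, and the single non-simple group $M_{20}\simeq 2^4:A_5$, which must be handled on its own.

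First I would dispose of the lower bound $\text{width}\ge 2$ uniformly. Granting that the verbal subgroup equals the whole group (argued below), it suffices to show $S(M_i)\neq M_i$. Each of these groups has even order, so the squaring endomap $x\mapsto x^2$ of the finite set $M_i$ cannot be injective, since any involution and the identity have the same square; being a non-injective self-map of a finite set it fails to be surjective, hence some element is not a single square and the width is at least $2$.

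Next, for the simple members $M_{11},M_{12},M_{21}\simeq PSL_3(F_4),M_{22},M_{23},M_{24}$ I would argue exactly as in the preceding Proposition: any element $g$ of odd order $m$ satisfies $g=(g^{(m+1)/2})^2$, so $S(M_i)$ contains non-identity elements and the verbal (hence fully invariant, hence normal) subgroup $V_{S(M_i)}(M_i)$ is non-trivial; by simplicity it equals $M_i$. The upper bound $\text{width}\le 2$ is then immediate from the cited theorem that every element of a finite non-abelian simple group is a product of two squares. Combined with the preceding paragraph, this yields width exactly $2$ for all six simple groups.

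The remaining and most delicate case is $M_{20}\simeq 2^4:A_5$, where simplicity fails and the Liebeck et al. bound is unavailable. I would first prove $V_{S(M_{20})}(M_{20})=M_{20}$ structurally: the verbal subgroup $V_S$ is normal, so $V_S\cap 2^4$ is invariant under the $A_5$-action; for an involution $a\in A_5$ one has $(v,a)^2=(v+a\cdot v,\,e)$, and the image of $1+a$ on the module is non-zero, so $V_S\cap 2^4\neq 0$; since $2^4$ is the natural irreducible $SL_2(4)\simeq A_5$-module, irreducibility forces $2^4\le V_S$, while $S(M_{20})$ also surjects onto the squares of $A_5$, which generate $A_5$, whence $V_S=M_{20}$. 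The upper bound $\text{width}\le 2$ for $M_{20}$ does not follow from the simple-group theorem, and I would establish it by the same CAP/GAP computation used for $M_8,M_9,M_{10}$ in the earlier Proposition. This final computational verification for $M_{20}$ is where I expect the real work to lie, since its product-of-two-squares covering has no uniform theoretical shortcut.
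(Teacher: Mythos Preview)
Your argument is correct and is more conceptual than the paper's. The paper's proof of this theorem is essentially computational: it records the GAP identification $M_{20}\simeq 2^4{:}A_5$ and the GAP verification that $S(M_{11})$ generates $M_{11}$, with the width $\le 2$ bound left to the later Proposition, which again says ``for verifying the verbal width for the rest of groups algebraic system GAP was applied.'' In other words, both the equality $V_{S(M_i)}(M_i)=M_i$ and the upper bound are obtained by machine for every group on the list.

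Your route differs in two places. For the six simple groups $M_{11},M_{12},M_{21},M_{22},M_{23},M_{24}$ you invoke the Liebeck--O'Brien--Shalev--Tiep theorem (which the paper cites in the introduction but does not use in this proof) to get width $\le 2$ without any computation, and you obtain $V_S=M_i$ from simplicity rather than from GAP. For $M_{20}$ you replace the GAP check that squares generate by a clean module-theoretic argument exploiting irreducibility of the natural $SL_2(4)$-module; only the final ``every element is a product of two squares'' step for $M_{20}$ remains computational, matching the paper. Your uniform lower bound via non-injectivity of $x\mapsto x^2$ on a group of even order is also an addition: the paper never explicitly argues width $\ge 2$. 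The net effect is that your proof isolates exactly one computer check ($M_{20}$, upper bound) where the paper performs many; this is a genuine improvement in transparency, at the cost of importing the deep Liebeck et al.\ result.
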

\begin{proof} Considering
$M_{11} $, we denote correspondent verbal subgroup $V_{S(M_{11})}(M_{11})$ as $H_{11}$.
The computation provided out using the GAP system gave us $M_{20} \simeq  (C_2 \times C_2 \times C_2 \times C_2) : A_5$ that means $M_{20} \simeq  (C_2 \times C_2 \times C_2 \times C_2) \rtimes A_5$.
The following equality $H_{11}=M_{11}$, obtained due to GAP system, means that set $S(M_{11})$ generates whole $M_{11}$.  
\end{proof}

Due to exceptional isomorphism
 ${{M}_{12}}\simeq PS{{L}_{2}}({{F}_{11}})$ we can apply the mentioned above criterion of squreness to $M_{12}$.

\section {Square root of an element in $GL_2(\mathbb{F}_p)$, $SL_2(\mathbb{F}_p)$ and $PSL_2(\mathbb{F}_p)$}

Let $SL_2(F_p)$  $(PSL_2(F_p))$ denote the special (projective) linear group of degree 2 over a finite field of order $p$. And a degree always means an irreducible character degree in this paper.

We recall the well known relation between eigenvalues of $A$ and $f(A)$.
\begin{lem}\label{eigenval}
(Lemma 2) if $\beta $ is an eigenvalue for $B$ then ${{\beta }^{2}}$ is an eigenvalue for ${{B}^{2}}$.
\end{lem}

Consider the criterion squareness of elements of $PSL_2(\mathbb{F}_p)$ as well as of $SL_2(\mathbb{F}_p)$ which can be presented by diagonal matrix,  where $\mathbb{F}_p$ is some finite field. As well known \cite{LinearShpringer} a matrix can be presented in the diagonal form iff algebraic multiplicity of its eigenvalues are the same  as a geometric multiplicity.

\begin{thm}\label{CriterionPSL}
Let
$A \in PSL_2(F_p)$  and eigenvalues of $A$ has algebraic multiplicity equal to geometric multiplicity \cite{Linear, LinearShpringer}, then
for matrix $A \in PSL_2(\mathbb{F}_p)$, there is a solution  $B \in PSL_2(\mathbb{F}_p)$ of matrix equation
 \begin{equation*}
X^2 = A
\end{equation*}
if and only if, at least one of

 \begin{equation}\label{tr}
 tr(A)+2,  \, \, tr(A)-2	
\end{equation}

  is a quadratic residue or $0$ in $\mathbb{F}_p$.

In case  $A\in S{{L}_{2}}({{F}_{p}})$  then last condition \eqref{tr} takes form:
\[\left(\frac{trA+2}{p}\right) \in \{0,1\}. \]

In case $A \in GL_2 ({{F}_{p}})$ then this condition has form
 \begin{equation*}
\left(\frac{trA + 2\sqrt{detA}}{p} \right) \in \{0,1\}.
 \end{equation*}

\end{thm}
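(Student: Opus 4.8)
The plan is to reduce the matrix equation $X^2 = A$ to a scalar condition on the trace, exploiting the diagonalizability hypothesis (algebraic multiplicity $=$ geometric multiplicity) together with the Cayley--Hamilton identity for $2\times 2$ matrices. The central observation is that for any $B$ one has $tr(B^2) = tr(B)^2 - 2\det B$, which is the trace-level shadow of Lemma~\ref{eigenval}: if $\mu,\mu^{-1}$ are the eigenvalues of $B\in SL_2$, then $\mu^2,\mu^{-2}$ are the eigenvalues of $B^2$, so $tr(B^2)+2 = (\mu+\mu^{-1})^2 = tr(B)^2$. I would first settle the $SL_2(\mathbb{F}_p)$ case, from which the $GL_2$ and $PSL_2$ statements follow by, respectively, tracking the determinant and quotienting by $\pm I$.

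For necessity in $SL_2(\mathbb{F}_p)$, suppose $B^2 = A$ with $B\in SL_2(\mathbb{F}_p)$; then $\det B = 1$ gives $tr(A) = tr(B^2) = tr(B)^2 - 2$, so $tr(A)+2 = tr(B)^2$ is the square of the field element $tr(B)\in\mathbb{F}_p$, whence $\left(\frac{tr A+2}{p}\right)\in\{0,1\}$. For sufficiency I would use diagonalizability: writing $A = P\,\mathrm{diag}(\lambda,\lambda^{-1})\,P^{-1}$, the elementary identity $tr(A)+2 = (\lambda+1)^2\lambda^{-1}$ shows that $tr(A)+2$ is a square in $\mathbb{F}_p$ exactly when $\lambda$ is a square, which is precisely the condition for a compatible eigenvalue root $\mu=\sqrt{\lambda}$ (with $\mu^{-1}=\sqrt{\lambda^{-1}}$) to exist; then $B = P\,\mathrm{diag}(\mu,\mu^{-1})\,P^{-1}$ satisfies $\det B = 1$ and $B^2 = A$.

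For $GL_2(\mathbb{F}_p)$ the same computation carries an extra determinant: now $\det B\in\mathbb{F}_p$ must satisfy $(\det B)^2 = \det A$, so $\det A$ is forced to be a square and $\det B = \pm\sqrt{\det A}$; fixing the sign yields $tr(B)^2 = tr(A)+2\sqrt{\det A}$, giving the stated Legendre-symbol condition. For $PSL_2(\mathbb{F}_p)$ I would pass to lifts: $[B]^2 = [A]$ means $B^2 = A$ or $B^2 = -A$ for a representative $B\in SL_2$, and the two sub-cases produce $tr(B)^2 = tr(A)+2$ and $tr(B)^2 = 2-tr(A)$ respectively, so a root exists iff at least one of these right-hand sides is a quadratic residue or $0$. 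Since the class $[A]=[-A]$ permits replacing $tr(A)$ by $-tr(A)$, the pair of conditions is symmetric and reproduces the disjunction over $tr(A)\pm 2$ in the statement.

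The main obstacle I expect is the sufficiency direction when the eigenvalues $\lambda,\lambda^{-1}$ lie in $\mathbb{F}_{p^2}\setminus\mathbb{F}_p$ rather than in $\mathbb{F}_p$: there one cannot literally diagonalize over $\mathbb{F}_p$, and I would instead argue through the rational canonical (companion) form, verifying that the computed value $tr(B)\in\mathbb{F}_p$ together with $\det B = 1$ determines a genuine $\mathbb{F}_p$-rational matrix $B$ whose eigenvalues are the conjugate pair $\mu,\mu^p=\mu^{-1}$ with $\mu^2=\lambda$. A secondary point requiring care is the choice of signs of the eigenvalue square roots so that $\det B = 1$ (not $-1$) is forced in $SL_2$, and the characteristic $p=2$ case, where the notion of quadratic residue degenerates and the squaring map on $\mathbb{F}_{2^n}$ is a bijection, so the trace condition must be re-read accordingly.
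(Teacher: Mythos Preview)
Your approach is essentially the same as the paper's: both hinge on the identity $tr(B)^2 = tr(B^2) + 2\det B$, derive necessity directly from it, and for sufficiency build $B$ from the characteristic polynomial $x^2 - cx + 1$ with $c^2 = tr(A)+2$ (what the paper calls the Frobenius form and you call the companion/rational canonical form). Your staging---settle $SL_2$ first, then pass to $PSL_2$ via the lift $B^2 = \pm A$ and to $GL_2$ by tracking $\det B$---is cleaner than the paper's somewhat interleaved treatment, but the underlying computation is identical.

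One small slip to repair: the claimed equivalence ``$tr(A)+2$ is a square in $\mathbb{F}_p$ exactly when $\lambda$ is a square'' fails at the boundary $\lambda = -1$, where $tr(A)+2 = 0$ is a square even when $-1$ is a non-residue (e.g.\ $p\equiv 3\pmod 4$). In that case $A=-I$ and your explicit diagonalization $B = P\,\mathrm{diag}(\mu,\mu^{-1})\,P^{-1}$ would produce $\mu\in\mathbb{F}_{p^2}\setminus\mathbb{F}_p$; the fix is exactly your companion-form fallback (or the direct witness $B=\left(\begin{smallmatrix}0&1\\-1&0\end{smallmatrix}\right)$), so the overall argument survives once you route this case there as well.
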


\begin{proof}
We assume that the matrixes $A$ and $B$ eigenvalues $\lambda_1, \lambda_2 $ and $\mu_1, \mu_2 $ respectively.
Let characteristic polynomial ${{\chi }_{B}}(x)$ of $B$ is the following: ${{\chi }_{B}}(x) = (x - \mu_1)(x - \mu_2)$. We denote $tr(A)$ by $a$. Then $a = \mu_1^2 + \mu_2^2$ is the sum of the roots of a polynomial ${{\chi }_{A}}(x)=(x-\mu_1^2)(x-\mu_2^2)$. Then $a = \mu_1^2 + \mu_2^2 = (\mu_1 + \mu_2)^2 - 2\mu_1\mu_2$. Since determinant of $PSL_n(F_p)$ is  1 then the product of eigenvalues of matrix $A$ satisfy the following equality: $\mu_1^2\mu_2^2 = 1 $ imlies $\mu_1\mu_2 = \pm 1$. Then $a + 2\mu_1\mu_2 = a \pm2 = (\mu_1+\mu_2)^2$. As well known a trace and determinant of matrix $A$ belongs to $F_p$. Therefore $\mu_1 + \mu_2 \in F_p$ and $\mu_1 \mu_2 \in F_p$.

We show existing of  ${{\chi }_{B}}(x):={{x}^{2}}-cx+1$ who's roots ${{\mu }_{1}},\,\,{{\mu }_{2}}$  are e.v. of $B$.
We prove the existence of ${{\chi }_{B}}(x):={{x}^{2}}-cx+1$ who's roots are e.v. ${{\mu }_{1}},\,\,{{\mu }_{2}}$  of$B$.  Let ${{\chi }_{{{B}^{2}}}}(x)={{\mu }^{2}}-a\mu +1$ then $\mu _{1}^{2},\,\,\,\mu _{2}^{2}$ are e.v. for $A$ and according to Viet theorem $\mu _{1}^{2}+\mu _{2}^{2}=a$.
Let us prove that the condition $(\frac{trA+2}{p})=1$ and the fact, following from Lemma \ref{eigenval}, that $\mu _{1}^{2},\,\,\,\mu _{2}^{2}$ are e.v. of  A  yields that there are  ${{\chi }_{B}}(x)$ over ${{\text{F}}_{\text{p}}}$  with roots  ${{\mu }_{1}},\,\,\,{{\mu }_{2}}$.

By condition of the theorem $trA+2$ is quadratic residue or 0, then there is $\sqrt{tr(A)+2}=\sqrt{{{({{\mu }_{1}}+{{\mu }_{2}})}^{2}}}$ in ${{\text{F}}_{\text{p}}}$, whence we have $tr(\text{B})\in {{\text{F}}_{\text{p}}},\,\,\det B\in {{\text{F}}_{\text{p}}}$ therefore ${{\chi }_{B}}(x)$ has coefficients in ${{\text{F}}_{\text{p}}}$, hence $B$ presented in Frobenius form belongs to $S{{L}_{2}}({{\text{F}}_{\text{p}}})$. Furthermore matrix $B$ having  e.v.${{\mu }_{1}},\,\,\,{{\mu }_{2}}$ is matrix over ${{\text{F}}_{\text{p}}}$, but ${{\mu }_{1}},\,\,\,{{\mu }_{2}}$ can be from ${{\text{F}}_{{{\text{p}}^{2}}}}\backslash {{\text{F}}_{\text{p}}}$.

 In the same time $\mu _{1}^{2}+\mu _{2}^{2}={{\left( \mu _{1}^{{}}+\mu _{2}^{{}} \right)}^{2}}-2=tr(A)$.
	Consider characteristic polynomial ${{x}^{2}}-cx+1=0$, where $c={{\mu }_{1}}+{{\mu }_{2}}$ . Show that exactly it is characteristic polynomial for $B=\sqrt{A}$. Indeed, since e.v. of B have to be ${{\mu }_{1}},\,\,\,{{\mu }_{2}}$ and have to satisfy condition (1).  Substitute ${{\mu }_{1}},\,\,\,{{\mu }_{2}}$ in ${{x}^{2}}-cx+1=0$ we can verify that its are roots ${{x}^{2}}-({{\mu }_{1}}+{{\mu }_{2}})x+1={{\mu }_{1}}^{2}-({{\mu }_{1}}+{{\mu }_{2}}){{\mu }_{1}}+1=0-{{\mu }_{1}}{{\mu }_{2}}+1=0$ and the analogously for verification for ${{\mu }_{2}}$.

Using condition \eqref{tr}
 we show that matrix $B$ with trace $tr(\text{B})={{\mu }_{1}}+{{\mu }_{2}}$ and determinant ${{\mu }_{1}}{{\mu }_{2}}=1$ exists. For this goal we express ${{\left( \mu _{1}^{{}}+\mu _{2}^{{}} \right)}^{2}}=\mu _{1}^{2}+\mu _{2}^{2}+2=tr(A)+2$.  Since $(\frac{trA+2}{p})=1$ then square root $\sqrt{tr(A)+2}=\sqrt{{{({{\mu }_{1}}+{{\mu }_{2}})}^{2}}}$ exists, i.e. matrix $A$ having trace ${{\mu }_{1}}+{{\mu }_{2}}$ exists.

The matrix $B$ is an element from $PSL_2(\mathbb{F}_p)$, it yields $\mu_1\mu_2 = 1$. Therefore, the
matrices with $\mu_1\mu_2 = - 1$ are not valid.

\[B=\left( \begin{array}{cc}
   \mu_1 \,\,\,& 0 \\
 0\,\,\,  & \mu_2  \\
\end{array} \right).\]

 The case, where $ \mu_1 = \mu_2= \mu$ implies that $\mu=g^2$, where $g\in F_p$ this condition demands to $\mu$ be a square in $F_p$. In this case $\mu \in F_p$ where $F_p$ is base field, so according to the theorem about Jordan basis Jordan matrix exists over this field. Considering the multiple eigenvalues $ \mu_1 = \mu_2= \mu$ and Viet theorem for characteristic polynomial, where that $b\in F_p$ holds, we see $b= \mu_1 + \mu_2 = 2\mu$. So transformation of matrix $A$ in Jordan form is possible. Note that in case $p=2$ expression $ \mu = b/2 $ is impossible. But existing of such $ \mu $ can be proven in another way.

There are two forms of matrix possible,
either $A$ has 2 multiple eigenvalues or $A$ has different eigenvalues.  If $A$ has form of a Jordan block then this case satisfies the conditions of the theorem \ref{CriterionPSLJordan} as well when  $A$ is scalar matrix then it satisfy condition of the theorem described below.






We note that case when $\lambda=0$ is irrelevant for consideration, because of belonging $A$ to $PSL_2(\mathbb{F}_p)$ demands $det(A)=1$.



 Two different eigenvalues $\lambda_1,  \lambda_2$ occur only in case of diagonal matrix. These eigenvalues: $\lambda_1= \mu_1^2,  \lambda_2=\mu_2^2$. In case of Jordan block of size 2 on 2 only multiple eigenvalues are possible.


A sufficient condition in the subcase of the Jordan cell being obvious is the same existence of a root $\sqrt{\lambda }$ in the finite field ${{F}_{p}}$, $p>2$. It is this root that is the eigenvalue \[\mu =\sqrt{\lambda }\] of the matrix $B$.

Let us express the square by trace and determinant. Since $ \mu_1^2 + \mu_2^2 = (\mu_1 + \mu_2)^2  - 2\mu_1\mu_2$ then $(\mu_1 + \mu_2)^2  = \mu_1^2 + \mu_2^2  + 2\mu_1\mu_2 = tr(A) + 2$. A value in the left part is the quadratic residue so $tr(A) + 2$ is too.
 Since in $PSL_2(F_p)$ and $A$ and $-A$
 are identified according to the factorization by the center of the $SL_2(F_p)$ then in case of $PSL_2(F_p)$ the both signs $"\pm"$ before the $tr(A)$ fit. In case of $SL_2(F_p)$ this conditions transforms in the form $tr(A) \pm2$ due to possible root $B$ with $det(B)=-1$. But such case $B \notin SL_2(F_p)$. If $tr(A)+2$ is multiple to $p$ then characteristic polynomial of $A$ ($\chi_A(x)$) takes form $x^2+1$. In general case $\chi_A(x)$ has form  $x^2 -(tr(A))x+detA$.

There is a similar criterion for a polynomial. In particular in role of such polynomial could play characteristic polynomial of $A$ and $B$.
Let be ${{\chi }_{A}}(x)(x) = x^2 - ax + 1$ $(a \in F_p)$. We call this polynomial squared if there is such ${{\chi }_{B}}(x) = x^2 - cx + d$, that $d=\mu_1 \mu_2, \, c= \mu_1 + \mu_2, \,  c,d \in F_p. $

The eigen values  $\mu_1$, $\mu_2$ are determined as $\mu_i = \pm \surd{(\mu_i)^2}, \,\, i \in \{1,2\}$. Furthermore $\mu_i= \frac{ x \pm \sqrt{y}} {2} $.
A combination of signs in terms representing $\mu_1$ and $\mu_2$ have to satisfy condition of polynomial squared: $d=\mu_1 \mu_2, \, c= \mu_1 + \mu_2, \in F_p$.

To prove if $A\in S{{L}_{2}}({{F}_{p}})$  then condition
\[(\frac{trA+2}{p})\in \{0,1\}\]
is sufficient for the existence of a solution of the equation $A={{X}^{2}}$
we shall prove that $\exists B:{{B}^{2}}=A$. Let us prove that this condition $(\frac{trA+2}{p})\in \{0,1\}$  provides the existing of characteristic polynomial  $\chi_B(x)={{x}^{2}}-cx+1$  with the $c\in {{F}_{p}}$ whose roots are the eigenvalues $\mu _{1}^{{}},\,\,\mu _{2}^{{}}$  of the matrix $B$ and ${{\mu }_{1}}{{\mu }_{2}}=1$ provided that $\mu _{1}^{2},\mu _{2}^{2}$  are the eigenvalues of matrix  $A$.

The proof is very similar in case $A\in PS{{L}_{2}}({{F}_{p}})$ ( $GL_2 ({{F}_{p}})$ ) then the condition $(\frac{trA \pm 2}{p})=1$  $((\frac{trA + 2\sqrt{detA}}{p})=1)$ is enough  for solvability of the equation  $A={{X}^{2}}$.

Consider the e. v.,  denoted by $\lambda_1$ and $\lambda_2$,  of matrix $A$.  Then we need to find such $\mu _{1}^{{}},\,\,\mu _{2}^{{}}$ that ${{\lambda }_{1}}=\mu _{1}^{2},\,\,{{\lambda }_{2}}=\mu _{2}^{2}$ and ${{\mu }_{1}}{{\mu }_{2}}=1$.

Note that existing of such $\mu _{1}^{{}},\,\,\mu _{2}^{{}}$  implies the following condition
${{({{\mu }_{1}}+{{\mu }_{2}})}^{2}}=\mu _{1}^{2}+2{{\mu }_{1}}{{\mu }_{2}}+{{\mu }_{2}}+{{\mu }_{2}}=\mu _{1}^{2}+\mu _{2}^{2}+2={{\lambda }_{1}}+{{\lambda }_{2}}+2=trA+2.$
holds.  Consider  such $c$ that ${{c}^{2}}=trA+2,\,\,\,c\in {{F}_{p}}$. We prove that roots $\mu _{1}^{{}},\,\,\mu _{2}^{{}}$ of ${{\mu }^{2}}-c\mu +1$
endowed  the property  ${{\mu }_{1}}{{\mu }_{2}}=1$  and $\mu _{1}^{2},\,\,\mu _{2}^{2}$  coincide with ${{\lambda }_{1}},\,\,{{\lambda }_{2}}$.

Indeed, the equality ${{\mu }_{1}}{{\mu }_{2}}=1$ is provided by Viet theorem. It remains to show that $\mu _{1}^{2}={{\lambda }_{1}},\,\,\mu _{2}^{2}={{\lambda }_{2}}$ this is equivalent to we need to show the equality of polynomials $\chi(A) = \left( x-{{\lambda }_{1}} \right)\left( x-{{\lambda }_{2}} \right)$ and $\left( x-\mu _{1}^{2} \right)\left( x-\mu _{2}^{2} \right)$.
We have
$\left( x-{{\lambda }_{1}} \right)\left( x-{{\lambda }_{2}} \right)={{x}^{2}}-\left( {{\lambda }_{1}}+{{\lambda }_{2}} \right)x+{{\lambda }_{1}}{{\lambda }_{2}}={{x}^{2}}-\left( trA \right)x+\det A={{x}^{2}}-\left( trA \right)x+1$.  In the similar way $\left( x-\mu _{1}^{2} \right)\left( x-\mu _{2}^{2} \right)={{x}^{2}}-\left( \mu _{1}^{2}+\mu _{2}^{2} \right)x+\mu _{1}^{2}\mu _{2}^{2}={{x}^{2}}-\left( \mu _{1}^{2}+\mu _{2}^{2}+2{{\mu }_{1}}{{\mu }_{2}}-2{{\mu }_{1}}{{\mu }_{2}} \right)x+\mu _{1}^{2}\mu _{2}^{2}$.
Since $\mu _{1}^{{}},\,\,\mu _{2}^{{}}$  are roots of  ${{\mu }^{2}}-c\mu +1$ then according to Viet theorem: ${{\mu }_{1}}{{\mu }_{2}}=1$ and  ${{\mu}_{1}}+{{\mu }_{2}}=c$, This implies
$\left( x-\mu _{1}^{2} \right)\left( x-\mu _{2}^{2} \right)={{x}^{2}}-\left( {{c}^{2}}-2 \right)x+{{1}^{2}}$.  Taking into account that ${{c}^{2}}=trA+2$ we derive  $\left( x-\mu _{1}^{2} \right)\left( x-\mu _{2}^{2} \right)={{x}^{2}}-\left( trA+2-2 \right)x+1={{x}^{2}}-\left( trA \right)x+1$,  which coincides with the characteristic polynomial $\chi(A)= \left( x-{{\lambda }_{1}} \right)\left( x-{{\lambda }_{2}} \right)$ for the matrix $A$. This completes the proof.

In cases when $A\in PS{{L}_{2}}({{F}_{p}})$ or $A\in G{{L}_{2}}({{F}_{p}})$ the proof is very similar.
\end{proof}

\begin{cor}
If $A\in GL(F_2)$ the condition \ref{tr} takes form:
\[\left(\frac{trA}{p}\right) \in \{0,1\}. \]
\end{cor}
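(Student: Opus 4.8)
The plan is to derive this statement as a direct specialization of the $GL_2(F_p)$ criterion proved in Theorem \ref{CriterionPSL}, simply setting $p=2$ and reading off what happens in characteristic two. Recall that the $GL_2(F_p)$ condition reads $\left(\frac{trA + 2\sqrt{detA}}{p}\right)\in\{0,1\}$, so the entire content of the corollary is to track how the numerator degenerates when the characteristic equals $2$.

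First I would observe that in $F_2$ we have $2=0$, so the term $2\sqrt{detA}$ vanishes identically, irrespective of the value of $detA$. Hence the numerator $trA + 2\sqrt{detA}$ collapses to $trA$, and the $GL_2(F_p)$ condition becomes $\left(\frac{trA}{2}\right)\in\{0,1\}$, which is exactly the asserted form (with the $p$ in the symbol now standing for $2$). This substitution is the whole computation; no new linear-algebra argument is required beyond what Theorem \ref{CriterionPSL} already supplies.

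Next I would check consistency with the hypotheses and with the known structure $GL_2(F_2)\simeq S_3$, both to guard against a spurious conclusion and to make the statement meaningful. Since Theorem \ref{CriterionPSL} requires the algebraic and geometric multiplicities of the eigenvalues of $A$ to coincide, the corollary applies only to the diagonalizable elements of $GL_2(F_2)$: the identity, for which $trA=0$, and the elements of order $3$, whose characteristic polynomial is $x^2+x+1$ and hence $trA=1$. In both cases $trA\in\{0,1\}$ and the symbol $\left(\frac{trA}{2}\right)$ lies in $\{0,1\}$, in agreement with the fact that each such element is a square in $S_3$, while the non-diagonalizable involutions (the transpositions of $S_3$), which the hypothesis correctly excludes, are indeed not squares.

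The one point that needs slight care — and the only place I expect any friction — is the interpretation of the quadratic-residue symbol at $p=2$. Over $F_2$ every element is trivially a square, since $0=0^2$ and $1=1^2$, so $\left(\frac{a}{2}\right)$ returns a value in $\{0,1\}$ for every $a\in F_2$ and never takes the value $-1$. I would make this remark explicit so that the formal substitution $p=2$ into a Legendre-type symbol is unambiguous; once this triviality of squaring on $F_2$ is noted, the corollary follows at once from Theorem \ref{CriterionPSL}.
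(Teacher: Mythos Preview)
Your proposal is correct and matches the paper's intent: the corollary is stated there without proof, immediately after Theorem \ref{CriterionPSL}, and is evidently meant to follow by the exact substitution you perform, namely setting $p=2$ in the $GL_2(F_p)$ condition $\left(\frac{trA+2\sqrt{\det A}}{p}\right)\in\{0,1\}$ so that $2\sqrt{\det A}=0$ and only $trA$ remains. Your additional consistency check against $GL_2(F_2)\simeq S_3$ and your remark that every element of $F_2$ is a square (so the symbol is never $-1$) go beyond what the paper records but are a welcome sanity verification.
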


\begin{cor} If matrix $A$ admits  diagonal form over ${{\text{F}}_{2}}$ then $A$ is square over ${{\text{F}}_{2}}$.
\end{cor}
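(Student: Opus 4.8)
The plan is to reduce the whole statement to the single arithmetic fact that over $\mathbb{F}_2$ the Frobenius map $x\mapsto x^2$ is the identity, so that every element of $\mathbb{F}_2$ is its own, and hence a, square root. First I would record that the hypothesis ``$A$ admits diagonal form over $\mathbb{F}_2$'' is exactly the standing assumption of Theorem~\ref{CriterionPSL}, namely that the algebraic and geometric multiplicities of the eigenvalues of $A$ coincide. Consequently we may write $A=PDP^{-1}$ with $D=\mathrm{diag}(\lambda_1,\lambda_2)$ and $\lambda_1,\lambda_2\in\mathbb{F}_2=\{0,1\}$.

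Next I would construct the square root explicitly. Since $\lambda_i\in\{0,1\}$ we have $\lambda_i^2=\lambda_i$, so putting $\mu_i:=\lambda_i$ gives $\mu_i^2=\lambda_i$; by Lemma~\ref{eigenval} these $\mu_i$ are the eigenvalues of a candidate root $B$. Setting $B:=P\,\mathrm{diag}(\mu_1,\mu_2)\,P^{-1}$ yields $B^2=P\,\mathrm{diag}(\mu_1^2,\mu_2^2)\,P^{-1}=P\,\mathrm{diag}(\lambda_1,\lambda_2)\,P^{-1}=A$. In fact $\mu_i=\lambda_i$ forces $B=A$, so the statement is equivalent to the observation that every matrix diagonalizable over $\mathbb{F}_2$ is idempotent, $A=A^2$, and is therefore its own square root.

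To phrase this in the language of the criterion of Theorem~\ref{CriterionPSL}, in which the corollary is stated, I would invoke the preceding corollary: over $\mathbb{F}_2$ the relation \eqref{tr} collapses because $2=0$, so that $tr(A)+2=tr(A)-2=tr(A)$, and the membership test reduces to $\left(\frac{tr(A)}{2}\right)\in\{0,1\}$. Since $0=0^2$ and $1=1^2$, every element of $\mathbb{F}_2$ is a quadratic residue or $0$; hence the trace condition is satisfied automatically, and Theorem~\ref{CriterionPSL} delivers a root $B$ with $B^2=A$.

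The only point where care is genuinely needed is characteristic $2$, and this is the expected obstacle: the proof of the main theorem treated a repeated eigenvalue through the substitution $\mu=b/2$, which is meaningless when $p=2$ (as already flagged in the proof of Theorem~\ref{CriterionPSL}). The diagonalizability hypothesis, however, excludes precisely the troublesome non-semisimple Jordan-block case, and the fact that Frobenius is the identity on $\mathbb{F}_2$ replaces the forbidden division by $2$. Thus no division by $2$ is ever required, the argument goes through verbatim for $\mathbb{F}_2$, and the same reasoning extends \emph{mutatis mutandis} to $\mathbb{F}_{2^n}$, where every element is again a square because the Frobenius endomorphism is a bijection.
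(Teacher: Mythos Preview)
Your argument is correct and is essentially the same as the paper's: both proofs rest on the single observation that every element of $\mathbb{F}_2$ (more generally $\mathbb{F}_{2^n}$) is a square, so a diagonalized matrix $A=P\,\mathrm{diag}(\lambda_1,\lambda_2)\,P^{-1}$ has the obvious root $B=P\,\mathrm{diag}(\sqrt{\lambda_1},\sqrt{\lambda_2})\,P^{-1}$. The paper states this in one sentence, while you additionally spell out the conjugation, note the pleasant consequence that over $\mathbb{F}_2$ one actually gets $B=A$ (idempotence), and tie it back to the trace condition; these are elaborations rather than a different route.
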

\begin{proof}
Since in $F_{2^n}$ all elements $g_i \in F_{2^n}$ are quadratic squares, therefore a diagonal matrix $A$ is always square of the mentioned above $B$ over $F_{2^n}$.
\end{proof}

\begin{prop} If matrix $A$ do not admits diagonal form over ${{\text{F}}_{2}}$ then $A$ is not square in $G{{L}_{2}}\left( {{\text{F}}_{2}} \right)$ over ${{\text{F}}_{2}}$.
\end{prop}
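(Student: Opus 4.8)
The plan is to first determine precisely which elements of $GL_2(\mathbb{F}_2)$ fail to admit a diagonal form, and then to rule out each of them as a square. Because the only nonzero scalar of $\mathbb{F}_2$ is $1$, every $A\in GL_2(\mathbb{F}_2)$ has $\det A=1$, so its eigenvalues $\lambda_1,\lambda_2$, taken in $\overline{\mathbb{F}_2}$, satisfy $\lambda_1\lambda_2=1$. A $2\times2$ matrix is non-diagonalizable precisely when it consists of a single Jordan block of size $2$, that is, a repeated eigenvalue $\lambda$ of geometric multiplicity $1$; this forces $\lambda^2=\det A=1$, and in characteristic $2$ this in turn forces $\lambda=1$. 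Thus the non-diagonalizable $A$ is a non-identity unipotent, conjugate to $\left(\begin{smallmatrix}1&1\\0&1\end{smallmatrix}\right)$, and satisfies $(A-I)^2=0$ with $A\neq I$. I would record that this is the unique non-semisimple conjugacy type of $GL_2(\mathbb{F}_2)$, so it is this single family that must be treated.

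Assuming such an $A$ is a square, I would write $A=B^2$ with $B\in GL_2(\mathbb{F}_2)$ and transport the eigenvalue constraint through Lemma \ref{eigenval}: every eigenvalue $\mu$ of $B$ satisfies $\mu^2=\lambda=1$, so again $\mu=1$ and $B$ is itself unipotent. Its characteristic polynomial is then $(x-1)^2$, and the Cayley--Hamilton theorem gives $(B-I)^2=0$. Expanding over $\mathbb{F}_2$ yields $(B-I)^2=B^2-2B+I=B^2+I$, hence $B^2=I$. This forces $A=B^2=I$, contradicting $A\neq I$; therefore no square root of $A$ exists in $GL_2(\mathbb{F}_2)$, which is the assertion.

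The hard part — and the only step where $p=2$ is indispensable — is the collapse $(B-I)^2=0\Rightarrow B^2=I$, which says that in characteristic $2$ every unipotent $2\times2$ matrix squares to the identity. Over a field of odd characteristic the same Jordan block does admit a unipotent square root (one halves the off-diagonal entry), so the statement is a genuine characteristic-$2$ effect: the Frobenius squaring map annihilates the off-diagonal of a unipotent block. As a cross-check I would invoke the isomorphism $GL_2(\mathbb{F}_2)\cong S_3$, under which the three non-diagonalizable matrices are exactly the involutions, matching the three transpositions of $S_3$; since the sign of any square is $+1$, a transposition is never a square. This confirms from the group-theoretic side that these unipotent elements are precisely the non-squares, dovetailing with the preceding corollary, under which the diagonalizable (semisimple) elements are the squares.
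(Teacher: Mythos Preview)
Your proof is correct and rests on the same characteristic-$2$ phenomenon as the paper's, but you package it differently. The paper argues by contradiction as well: assuming a root $B$ exists, it takes $B$ in Jordan form $\left(\begin{smallmatrix}\lambda&1\\0&\lambda\end{smallmatrix}\right)$ and squares directly to obtain $\left(\begin{smallmatrix}\lambda^2&2\lambda\\0&\lambda^2\end{smallmatrix}\right)$, which over $\mathbb{F}_2$ collapses to the scalar matrix $\left(\begin{smallmatrix}\lambda^2&0\\0&\lambda^2\end{smallmatrix}\right)$, contradicting non-diagonalizability of $A$. Your route replaces this explicit Jordan-block computation by Cayley--Hamilton: from $(B-I)^2=0$ and $2=0$ you get $B^2=I$ without ever putting $B$ into a normal form. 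This is a genuine, if modest, improvement in cleanliness---the paper's version tacitly assumes $B$ can be taken as a Jordan block, which requires a word of justification (either $B=I$ or $B$ is conjugate to the block), whereas your argument handles every $B$ with characteristic polynomial $(x-1)^2$ uniformly. The $GL_2(\mathbb{F}_2)\cong S_3$ cross-check, identifying the non-diagonalizable matrices with the transpositions and invoking the sign homomorphism, is a pleasant addition not present in the paper.
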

\begin{proof}
If matrix $A$ do not admits diagonal form over ${{\text{F}}_{2}}$ then $A$ is not square in $G{{L}_{2}}\left( {{\text{F}}_{2}} \right)$ over ${{\text{F}}_{2}}$.
We consider equation of form ${{X}^{2}}=A$ and show that it has not solutions over ${{\text{F}}_{2}}$ в $S{{L}_{2}}\left( {{\text{F}}_{2}} \right)$ in case ${{\chi }_{A}}\left( x \right)={{\mu }_{A}}\left( x \right)$. The conditions of theorem implies that geometrical dimension  of e.v. is  1 but algebraic multiplicity of e.v. $\lambda $ is 2. We make prof by contradiction, assuming that is true then

${\left(\begin{array}{cc}
  & \lambda \,\,1 \\
 & 0\,\,\lambda  \\
\end{array} \right)}^{2}=\left(\begin{array}{cc}
  & {{\lambda }^{2}}\,\,2\lambda  \\
 & 0\,\,\,\,{{\lambda }^{2}} \\
\end{array} \right)$  but
$\left( \begin{array}{cc}
  & {{\lambda }^{2}}\,\,2\lambda \\
 & 0\,\,\,\,{{\lambda}^{2}} \\
\end{array}\right)=
 \left(\begin{array}{cc}
  & {{\lambda }^{2}}\,\,0 \\
 & 0\,\,\,\,{{\lambda }^{2}}\\
\end{array} \right)$ over ${{\text{F}}_{2}}$.
That contradicts to condition of this Theorem.
\end{proof}

Let $B$ has characteristic polynomial ${{x}^{2}}+bx+c$. It is well known that trace of $B$ is stable
under choosing of vector space base.

We denote Jordan form of matrix $A$ as ${{J}_{A}}$.

\begin{lem}\label{Trace} If a matrix $A\in PS{{L}_{2}}({{F}_{p}})$ has multiple eigenvalues ${{\beta }_{1}}={{\beta }_{2}}=\beta $  and non-trivial Jourdan block of size $2\times 2$ then $\beta \in {{\text{F}}_{p}}.$
\end{lem}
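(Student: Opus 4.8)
The plan is to exploit the fact that a repeated eigenvalue, together with the rationality of the characteristic polynomial, pins the eigenvalue down inside the base field via the Frobenius action. The non-trivial $2\times 2$ Jordan block guarantees that $A$ is non-diagonalizable with a single eigenvalue, so the whole argument reduces to a statement about where the unique root of a quadratic over $F_p$ can live.

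First I would observe that, since $A$ possesses a non-trivial Jordan block of size $2\times 2$ with the single eigenvalue $\beta_1=\beta_2=\beta$, its characteristic polynomial is the perfect square $\chi_A(x)=(x-\beta)^2$, whose only root is $\beta$ (of algebraic multiplicity $2$); here the minimal polynomial coincides with $\chi_A$ precisely because the block is non-trivial. The decisive point is that $A$ has entries in $F_p$, so the coefficients of $\chi_A(x)$ lie in $F_p$. The core step is then the Frobenius argument: if $f\in F_p[x]$ and $f(\beta)=0$, then applying the Frobenius endomorphism $x\mapsto x^p$ — which fixes $F_p$ pointwise and is a ring homomorphism — gives $f(\beta^p)=f(\beta)^p=0$, so $\beta^p$ is again a root of $f$. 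Applied to $\chi_A$, whose sole root is $\beta$, this forces $\beta^p=\beta$, and by the characterization of $F_p$ as the fixed field of Frobenius we conclude $\beta\in F_p$.

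I do not expect a real obstacle here, since the claim is essentially the rationality of the repeated root of a quadratic with $F_p$-coefficients. The only point demanding slight care is the prime $p=2$: the quick alternative route, reading $\beta$ off from the trace via $\operatorname{tr}(A)=2\beta$, collapses in characteristic $2$, and one would instead fall back on $\det(A)=\beta^2=1$ (valid in $PSL_2(F_p)$), giving $\beta=1\in F_2$. The Frobenius argument is preferable exactly because it dispatches every prime $p$ uniformly and never invokes the determinant, so it is the version I would write up.
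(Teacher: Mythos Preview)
Your argument is correct, and it differs from the paper's. The paper simply reads off the eigenvalue from the trace: since $A$ has entries in $F_p$ one has $\operatorname{tr}(A)=\beta+\beta=2\beta\in F_p$, whence $\beta=\operatorname{tr}(A)/2\in F_p$ in characteristic $p\neq 2$; the paper explicitly restricts to that case and stops there. Your Frobenius argument instead uses that $\chi_A\in F_p[x]$ has $\beta$ as its unique root, so $\beta^p=\beta$, giving $\beta\in F_p$ uniformly in $p$. What you gain is exactly the point you flagged: your route covers $p=2$ without a separate patch via the determinant, whereas the paper's trace division does not. Neither proof actually uses the non-trivial Jordan block hypothesis beyond ensuring the eigenvalue is repeated; your parenthetical about the minimal polynomial is true but not needed for the conclusion.
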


Proof.  Since in this case eigenvalues are presented as elements of matrix $B$  standing on diagonal, then this matrix can be in form: $B=\left( \begin{array}{cc}
   \beta \,\,\,& 1 \\
  0\,\, & \beta  \\
\end{array} \right)$   or

\[B=\left( \begin{array}{cc}
   \beta \,\,\,& 0 \\
 0\,\,\,  & \beta  \\
\end{array} \right).\]

But the eigenvalues  of the matrix are multiples, therefore $\beta +\beta =tr(B)\in {{\text{F}}_{p}}$. This implies $2\beta =b$, therefore in a field of characteristic  non equal 2  we express this eigenvalue as $\beta =\frac{b}{2}$. Hence $\beta \in {{\text{F}}_{p}}$.  The proof is complete.

\begin{thm}\label{CriterionPSLJordan}
 Under conditions $(\frac{\lambda }{p})=1$ in ${{F}_{p}}$ and matrix $A$ is similar to a Jordan block of the form
\begin{equation}\label{Jblock}
 {{J}_{A}}= \left(
  \begin{array}{cc}
    \lambda & 1 \\
     0 & \lambda \\
  \end{array}
\right)
\end{equation}
a square root $B$ of $A$ exists in $PS{{L}_{2}}({{F}_{p}})$.
\end{thm}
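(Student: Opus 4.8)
The plan is to exhibit a square root of the Jordan block $J_A$ explicitly and then transport it to $A$ by conjugation. First I would record that the repeated eigenvalue $\lambda$ actually lies in ${F}_{p}$: this is precisely the content of Lemma \ref{Trace}, since $A$ has a single eigenvalue of algebraic multiplicity $2$ sitting in a nontrivial $2\times 2$ block. Because $A\in PSL_2({F}_p)$ forces $\det A=\lambda^2=1$, we have $\lambda\neq 0$, and the hypothesis $\left(\frac{\lambda}{p}\right)=1$ then produces $\mu\in{F}_p$ with $\mu^2=\lambda$. By Lemma \ref{eigenval} this $\mu$ is exactly the eigenvalue that a root $B$ must carry, so it is the natural candidate for the diagonal entry of $B$.

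Next I would search for $B$ among the matrices sharing the single eigenline of $J_A$, namely $B=\begin{pmatrix}\mu & c\\ 0 & \mu\end{pmatrix}$. A direct computation gives $B^2=\begin{pmatrix}\mu^2 & 2\mu c\\ 0 & \mu^2\end{pmatrix}=\begin{pmatrix}\lambda & 2\mu c\\ 0 & \lambda\end{pmatrix}$, so setting $c=(2\mu)^{-1}$ --- legitimate exactly because $p\neq 2$ and $\mu\neq 0$ --- yields $B^2=J_A$. This step is the heart of the argument, and it is essentially forced: any root must preserve the unique invariant line of $J_A$ and must have $\mu$ as its repeated eigenvalue, hence be upper triangular of this shape, and the single free parameter $c$ is then pinned down by the off-diagonal entry $1$ of $J_A$. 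Writing $A=PJ_AP^{-1}$ with $P\in GL_2({F}_p)$, the matrix $\widetilde{B}:=PBP^{-1}$ satisfies $\widetilde{B}^2=PB^2P^{-1}=PJ_AP^{-1}=A$, and conjugation does not affect membership in the group.

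The step I expect to be the main obstacle is verifying that the root actually lands in $PSL_2({F}_p)$, that is, the determinant and center bookkeeping. Here $\det B=\mu^2=\lambda$, and since an element of $PSL_2({F}_p)$ satisfies $\lambda=\pm 1$, the case $\lambda=1$ gives $\det\widetilde{B}=1$ directly and we are done in $SL_2({F}_p)\subset PSL_2({F}_p)$. The case $\lambda=-1$ is the delicate one: there the explicit $B$ has $\det B=-1$, so $B\notin SL_2({F}_p)$, and in fact $J_A$ has no root at all inside $SL_2({F}_p)$; one must use the central identification $\widetilde{B}\sim-\widetilde{B}$ in $PSL_2({F}_p)$, under which $J_A$ is identified with the unipotent block $-J_A=\begin{pmatrix}1 & -1\\ 0 & 1\end{pmatrix}$ and the $\lambda=1$ construction applies. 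Finally the excluded characteristic $p=2$, where $2\mu$ is not invertible and the formula for $c$ breaks down, has to be set aside and treated as the degenerate case isolated in the Proposition on matrices over ${F}_2$ not admitting a diagonal form.
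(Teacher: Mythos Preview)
Your proposal is correct and follows essentially the same approach as the paper: build an upper-triangular matrix $\begin{pmatrix}\mu & c\\0 & \mu\end{pmatrix}$ with $\mu^2=\lambda$ and $c=(2\mu)^{-1}$, square it to recover $J_A$, and conjugate back to $A$; the paper likewise invokes Lemma~\ref{Trace} for $\beta\in{F}_p$ and Lemma~\ref{eigenval} for the eigenvalue relation. Your treatment is in fact more careful than the paper's on the point you flagged as the main obstacle --- the paper does not explicitly verify that the constructed root has determinant $1$ or handle the $\lambda=-1$ case via the central identification in $PSL_2$, whereas you do.
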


\begin{proof}
Assume that square from $A$ exists in $PS{{L}_{2}}({{F}_{p}})$ (or in $S{{L}_{2}}({{F}_{p}})$ correspondently). We denote matrix $B$ transformed to upper triangular form by $U{{T}_{B}}$. Let us show that there that provided condition above it always exists such $B:\,\,UT_{B}^{2}={{J}_{A}}$, where $U{{T}_{B}}$ is $B$ transformed to UTM form.  Then we show that it implies existing  of solution of \[{{X}^{2}}=A. \]
From the existence of the Jordan block for $A$ follows the existence of a similarity transformation $U$ transforming ${{B}^{2}}$ to the Jordan normal form $J_B$ because of  $A = {{B}^{2}}$ and A has non-trivial Jordan block denoted by $J_A$. But square root from $B^2$ this operator $U$ transforms in upper triangular form $UT_B$. Then if we find solution for

\begin{equation}\label{uppertriang}
UT_B^2=J_A
\end{equation}

we can obtain solution for ${{X}^{2}}=A$ because of the following:

\begin{equation}\label{eqtransform}
A=U\cdot (U{{T}_{B}})^2\cdot {{U}^{-1}}=(U\cdot U{{T}_{B}}\cdot {{U}^{-1}})(U\cdot U{{T}_{B}}\cdot {{U}^{-1}})={{B}^{2}}.
\end{equation}

It means that such matrix $U{{T}_{B}}$ satisfying \eqref{eqtransform}, exists and it can be transformed  by the same similarity transformation by conjugation in form $U{{T}_{B}}={{U}^{-1}}BU$ by the same matrix that transforms $A$ in ${{J}_{A}}$ because of ${{B}^{2}}=A$. To show the existing of such solution of \eqref{uppertriang} we acting by invers transformation $A=U\cdot (U{{T}_{B}})^2\cdot {{U}^{-1}}=(U\cdot U{{T}_{B}}\cdot {{U}^{-1}})(U\cdot U{{T}_{B}}\cdot {{U}^{-1}})={{B}^{2}}$, where $U$ is similarity transformation $B$ to

$$U{{T}_{B}}=\left(
  \begin{array}{cc}
  & \beta \,\,\,\gamma  \\
 & 0\,\,\,\beta  \\
  \end{array}
 \right).$$
note that its diagonal elements ${{b}_{11}}={{b}_{22}}=\beta $ are the same. Therefore according to Lemma \ref{Trace} we have $\beta \in {{F}_{p}}$. Even more easier we can deduce it without Lemma 19. We have ${{b}_{11}}={{b}_{22}}=\beta $, then  $\beta +\beta =\text{Tr}({{U}^{-1}}BU)$. Therefore $2\beta \in {{\text{F}}_{p}}$. It implies that $\beta \in {{F}_{p}}$ if $p>2$.

$$(UT_{B})^{2}=\left( \begin{array}{cc}
  {{\beta }^{2}}\,\,\,  &  2\beta \gamma  \\
  0\,\,\,\,\,\,\,\,\,  & {{\beta }^{2}} \\
\end{array} \right). $$

Here the element $\gamma $ can be chosen $\gamma :\,\,\, 2\beta \gamma =1$ so $\gamma = {2 \beta}^{-1}$ taking into account that $\beta =\sqrt{\lambda }$ which is already determined by $A$. Then  $(UT_{B})^{2}:$

$(UT_{B})^{2}=\left( \begin{array}{cc}
   {{\beta }^{2}}\,\,\, & 2\beta \gamma  \\
  0\,\,\,\,\,\,\,\,\, & {{\beta }^{2}} \\
\end{array} \right)=\left( \begin{array}{cc}
   {{\beta }^{2}}\,\,\, & 1 \\
  0\,\,\,\,\, & {{\beta }^{2}} \\
\end{array} \right)=J_A = \left(
	\begin{array}{cc}
	\lambda \, & 1 \\
	0 & \lambda \, \\
	\end{array}
\right) $.

Furthermore we show that these conditions is also necessary but not only sufficient. It means if $(\frac{\lambda }{p})=-1$, then there are no matrix $B$ over $PS{{L}_{2}}({{F}_{p}})$ such that ${{B}^{2}}=A$. By a reversal of theorem condition  and using the representation in the form of UTM for and for we see that $B$ from $PS{{L}_{2}}({{F}_{p}})$ such that ${{B}^{2}}=A$. We see that according to the Lemma \ref{Trace}
 the eigenvalue of $B$ over lie in the main field ---  ${{\text{F}}_{p}}$. However, we assumed that  $(\frac{\lambda }{p})=-1$. Thus we obtain the desirable  contradiction.

Let us show that condition of  non-diagonalizability of matrix is necessary in the conditions of this Theorem. By virtue of the well-known theorem stating that if the algebraic multiplicity is equal to the geometric  multiplicity for each eigenvalue, then matrix is diagonalizable otherwise it is not diagonalizable, we see that if the condition of  similarity  to  ${{J}_{A}}=\left( \begin{array}{cc}
   \lambda \,\,\, & 1 \\
  0\,\,\, & \lambda  \\
\end{array} \right)$
indicated in this Theorem \ref{CriterionPSLJordan}  does not holds, then  such $A$ satisfy the conditions of this Theorem \ref{CriterionPSL}, where algebraic multiplicity is equal to geometrical. And since the condition \ref{Jblock} of this criterion is nature, therefore, it is no longer necessary to prove the non-diagonalizability condition in Theorem \ref{CriterionPSLJordan}.

Proof of \textbf{necessity}.
Furthermore we show that these conditions is also necessary but not only sufficient. It means if $(\frac{\lambda }{p})=-1$, then there are no matrix $B$ having non trivial Jordan block over $PS{{L}_{2}}({{F}_{p}})$ and $S{{L}_{2}}({{F}_{p}})$ such that ${{B}^{2}}=A$. By a reversal of theorem condition  and using the representation in the form of UTM for and for we see that $B$ from $PS{{L}_{2}}({{F}_{p}})$ such that ${{B}^{2}}=A$. We see that according to the Lemma  the eigenvalue of $B\in S{{L}_{2}}({{F}_{p}})$or $B\in PS{{L}_{2}}({{F}_{p}})$ correspondingly,  lie in the main field -- ${{\text{F}}_{p}}$. Furthermore according to Lemma \ref{eigenval} if $\beta $ is an eigenvalue for $B$ then ${{\beta }^{2}}$ is an eigenvalue for ${{B}^{2}}$, so we have ${{\beta }^{2}}=\lambda $.   However, we assumed that $(\frac{\lambda }{p})=-1$. Thus we obtain the desirable  contradiction.    The eigenvalue $\beta$ has geometrical dimension 1, because of in oppositive case geometrical $\dim\beta =2$ (dimension of eigenvector space of $\beta$), then we get that $J_{B}^{2}$ is only scalar matrix $B$.

The proof is fully complete.
\end{proof}



\begin{exm}
A  sufficiency  of the condition $(\frac{\lambda }{p})=1$ for $\exists $ $B:\,\,{{B}^{2}}=A$, where $A \sim {{J}_{A}}$ of size $2\times 2$ with one eigenvalue corresponding to one eigenvector is given by following matrix from $S{{L}_{2}}(R)$:

${{J}_{A}}=\left( \begin {array}{cc}
   1\,\,\, & 1 \\
  0\,\,\, & 1 \\
\end{array} \right)$ then $B={{\left( \begin{array}{cc}
  \mu \,\,\,  & 1 \\
  0\,\,\,  & \mu  \\
\end{array} \right)}^{2}}=\left( \begin {array}{cc}
   {{\mu }^{2}}\,\,\, & 2\mu  \\
  0\,\,\,\,\,\,  & {{\mu }^{2}} \\
\end{array} \right),\,\,\,\,\mu =\pm \sqrt{1}$.

This confirms Theorem \ref{CriterionPSL}.
Choosing the base for $B$ to $A$ be in Jordan form (in Jordan base):  $UB{{U}^{-1}}$ we obtain

\[\left( \begin{array}{cc}
   \frac{\mu }{2}\,\,\,\, & 1 \\
  0\,\,\,\,\,\, & \frac{\mu }{2} \\
\end{array} \right)={{J}_{B}}.\]

The last matrix is expressed by conjugating of $B$ by a diagonal matrix.
\end{exm}

\begin{exm}
 Consider an example of a matrix showing the that the condition of the existence of a non-trivial Jordan block 2 by 2 is necessary in Theorem \ref{CriterionPSLJordan}.
Let
$A=\left( \begin{array}{cc}
  -1\,\,\,\,\,\, & 0 \\
  \,\,0\,\,\, & -1 \\
\end{array} \right)={{\rho }_{180}}$. This is a $-90$ degree rotation matrix. Then its square root $B\in S{{L}_{2}}(R)$ has form ${{\rho }_{90}}=\left( \begin{array}{cc}
   \,0\,\,\,\,& 1 \\
  -1\,\,& 0 \\
\end{array} \right)=B\in S{{L}_{2}}(R)$.  Note that $A$ is presented in the diagonal form. There are also roots $B_{1}^{{}}=\left( \begin{array}{cc}
   i\,\,\,\,\,\,\, & a \\
  0\,\,\, & -i \\
\end{array} \right)$ from $S{{L}_{2}}(C)$ as well as $B_{2}^{{}}=\left( \begin{array}{cc}
   -i\,\,\,\,\,\, & a \\
  \,0\,\,\,\,\,\, & i \\
\end{array} \right)$.
\end{exm}

Consider the commutative matrix algebra $A\lg \left[ A \right]=\left\langle E,A \right\rangle $ over ${{F}_{p}}$, where$A\in S{{L}_{2}}\left( {{F}_{p}} \right)$, A  is semisimple without multiple e.g. and $E$ is identity matrix.
Let us consider such formulation of Weederbern Artin theorem (Theorem W. A.). Each simple algebra is isomorphic to full matrix ring over algebra with division.

We refer to such formulation of theorem Wederburn Artin from Vanderwarden's book [1, page 372, 1979 year].
\textbf{Theorem W. A.}

 \begin{thm} Every simple algebra with 1 is isomorphic to a complete matrix ring ${{M}_{{{n}_{i}}}}[{{D}_{i}}]$ over a division algebra ${{D}_{i}}$.
\end{thm}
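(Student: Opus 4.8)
The plan is to establish this classical Wedderburn--Artin statement through the regular representation of $A$ on itself together with Schur's lemma, working in the finite-dimensional (equivalently left-Artinian) setting in which it is applied here. First I would note that a simple algebra $A$ with $1$ satisfying the descending chain condition on left ideals admits a minimal nonzero left ideal $L$; by minimality $L$ is a simple left $A$-module.

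The next step is to decompose the regular module. Since $A$ is simple, the set $\sum_{a\in A} La$ is a nonzero two-sided ideal and must therefore coincide with $A$. Invoking the descending chain condition I would extract from this sum a finite direct decomposition $A = La_{1}\oplus\cdots\oplus La_{n}$ of left $A$-modules. Each summand $La_{i}$ is a homomorphic image of $L$ under right multiplication by $a_{i}$, hence is either zero or isomorphic to $L$ by simplicity; discarding the zero summands yields an isomorphism $A\cong L^{n}$ of left $A$-modules.

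Now by Schur's lemma $D:=\mathrm{End}_{A}(L)$ is a division algebra. Because endomorphisms of the free rank-one module $A$ are exactly the right multiplications, one has $\mathrm{End}_{A}(A)\cong A^{\mathrm{op}}$. Combining this with $A\cong L^{n}$ and the standard identification of endomorphisms of a direct power with matrices over the endomorphism ring of a single summand, I obtain
\[A^{\mathrm{op}}\cong \mathrm{End}_{A}(A)\cong \mathrm{End}_{A}(L^{n})\cong M_{n}\!\left(\mathrm{End}_{A}(L)\right)=M_{n}(D).\]
Passing to opposite rings gives $A\cong M_{n}(D)^{\mathrm{op}}\cong M_{n}(D^{\mathrm{op}})$, and since $D^{\mathrm{op}}$ is again a division algebra this is precisely the asserted form $M_{n_i}[D_i]$.

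The hardest part, and the step demanding the most care, is the endomorphism-ring bookkeeping: one must verify that $\mathrm{End}_{A}(L^{n})\cong M_{n}(D)$ with the correct matrix multiplication convention and, crucially, track the opposite-ring twists so that the scalars of the final matrix ring are genuinely a division algebra. The remaining ingredients---existence of a minimal left ideal, simplicity forcing $\sum_{a\in A} La=A$, and Schur's lemma---are standard consequences of the chain condition and of $A$ being simple with identity.
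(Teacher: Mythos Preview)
Your argument is a correct and standard proof of the Wedderburn--Artin theorem via the regular representation, Schur's lemma, and endomorphism-ring bookkeeping. However, there is nothing to compare it against: the paper does not prove this statement at all. It is merely quoted as a classical result (``Theorem W.~A.'') with an explicit reference to van der Waerden's textbook, and is then invoked as background for the subsequent Lemma~\ref{IsomwithField}. So your proposal supplies a proof where the paper intentionally offers none.
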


Recall well known statement.
\begin{prop}
 Every finite commutative division algebra ${{D}_{i}}$ is  finite field.
\end{prop}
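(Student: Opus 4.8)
Every finite commutative division algebra $D_i$ is a finite field.

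The plan is to unwind the definitions and observe that the statement is essentially a matter of checking axioms, together with one nontrivial input about finiteness. A division algebra over a field is, by definition, an associative algebra with $1$ in which every nonzero element is invertible; here we are additionally given that it is commutative and finite (as a set). First I would recall that a field is precisely a commutative division ring with $1 \neq 0$, so the entire content of the proposition reduces to verifying that the underlying object $D_i$ is in fact a ring-theoretic division ring whose multiplication is commutative—and this is immediate from the hypotheses, since a commutative division algebra already has all the required operations, associativity, distributivity, a multiplicative identity, and multiplicative inverses for nonzero elements.

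The only genuine subtlety is that the word ``algebra'' carries the extra data of a scalar action by some base field $k$, and one must make sure this does not obstruct the conclusion. I would point out that the base field $k$ embeds into $D_i$ via $c \mapsto c\cdot 1$, so $D_i$ is a commutative ring containing $k$ in which every nonzero element is a unit; hence $D_i$ is itself a field extension of $k$. Finiteness then guarantees that $D_i$ is a \emph{finite} field, completing the argument. In other words, no appeal to Wedderburn's little theorem is even needed, precisely because commutativity is assumed outright: Wedderburn's theorem is the deeper result that drops the commutativity hypothesis, and it is exactly that theorem which makes this commutative version trivial by comparison.

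The main (and essentially only) obstacle is conceptual rather than computational: one must be careful that ``division algebra'' is interpreted so that nonzero elements are invertible in the multiplicative sense, and that the commutativity hypothesis is used to pass from ``division ring'' to ``field.'' Once those two observations are in place, the proof is a one-line verification that a finite commutative division ring is a finite field. I would therefore present the argument as: (i) record that $D_i$ is a commutative ring with $1$ in which every nonzero element is invertible; (ii) conclude $D_i$ is a field; (iii) invoke the given finiteness to deduce $D_i$ is a finite field.
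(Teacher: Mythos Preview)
Your argument is correct: a finite commutative division algebra is, by definition, a finite commutative ring with $1$ in which every nonzero element is invertible, hence a finite field. Your remark that Wedderburn's little theorem is unnecessary here is also apt.

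As for comparison with the paper: there is nothing to compare. The paper does not prove this proposition at all; it simply introduces it with the phrase ``Recall well known statement'' and moves on. So your write-up supplies a justification where the paper offers none.
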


The following Lemma \ref{IsomwithField} can be proved with using of Weederbern Artin  theorem in the following way.
As well known that every finite commutative division algebra is isomorphic to finite field.
(Note that the field is a division commutative algebra and complete matrix ${{M}_{2\times 2}}[{{F}_{p}}]$  ring over ${{F}_{p}}$ is algebra.)
But we need more specialized isomorphism making connection between $A$ and its e.g. as element of ${{F}_{p^2}}$ so we will construct it now.

Since ${{F}_{{{p}^{2}}}}$ is simple commutative algebra and elements of group  $A\lg \left[ A \right]=\left\langle E,A \right\rangle $ with additional zero matrix $2\times 2$ forms an commutative matrix ring then according to Weederbern Artin theorem we have $A\left[ M \right]\simeq {{F}_{{{p}^{2}}}}$.  Kelly Hamilton theorem yields that dimension of such algebra $A\lg \left[ A \right]$ is 2 because of characteristic polynomial $\chi_A(x)$ of $A$ has degree 2.

To give a self-contained proof, we propose the following Lemma.

Firstly we consider case $A\in S{{L}_{2}}({{F}_{p}})$ but after our reasoning can be easy extended on the group $G{{L}_{2}}({{F}_{p}})$ because the property property of the $\det A=1$ will not be involved in our reasoning.

\begin{lem}\label{IsomwithField}
 The matrix algebra $Alg [A]=\left\langle E,\,\,A \right\rangle \simeq {{F}_{{{p}^{2}}}}$.
\end{lem}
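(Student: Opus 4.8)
The plan is to realize $Alg[A]$ as a quotient of the polynomial ring $F_p[x]$ and then use the distinctness of the eigenvalues to show that the relevant quotient is a field of order $p^2$, after which uniqueness of finite fields finishes the job. First I would invoke Cayley--Hamilton (the Kelly--Hamilton theorem referred to above): since $\chi_A(x)=x^2-tr(A)x+\det A$ annihilates $A$, we have $A^2=tr(A)\,A-(\det A)E$, so every power $A^k$ with $k\ge 2$ collapses to an $F_p$-combination of $E$ and $A$. Hence $Alg[A]=\{aE+bA:a,b\in F_p\}$. Because $A$ has two distinct eigenvalues it is non-scalar, so $E$ and $A$ are linearly independent over $F_p$; therefore $\dim_{F_p}Alg[A]=2$ and $|Alg[A]|=p^2$, in agreement with the dimension count from $\deg\chi_A=2$ mentioned just before the Lemma.

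Next I would consider the evaluation homomorphism $\varphi\colon F_p[x]\to Alg[A]$ sending $x\mapsto A$. By the previous paragraph $\varphi$ is surjective, and its kernel is the ideal generated by the minimal polynomial $m_A$. Since $A$ is semisimple with distinct eigenvalues, $m_A=\chi_A$ has degree $2$, so the first isomorphism theorem gives $Alg[A]\cong F_p[x]/(\chi_A(x))$. This already exhibits $Alg[A]$ as a commutative $F_p$-algebra of dimension $2$.

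The decisive step, and the main obstacle, is to show that this quotient is a field rather than a direct product of fields. Now $F_p[x]/(\chi_A(x))$ is a field precisely when $\chi_A$ is irreducible over $F_p$, i.e. exactly when the eigenvalues $\lambda_1\ne\lambda_2$ of $A$ lie in $F_{p^2}\setminus F_p$ as a Frobenius-conjugate pair; this is the case genuinely relevant to the square-root criterion, where $A$ is not already diagonalizable over $F_p$. If instead both distinct eigenvalues lay in $F_p$, then $\chi_A$ would split and $Alg[A]\cong F_p\times F_p$, which is semisimple but not simple, so the statement must be read under the standing hypothesis that $\chi_A$ is irreducible. Granting irreducibility, $Alg[A]$ has no nontrivial ideals and hence is a simple commutative algebra; the Wedderburn--Artin theorem combined with the Proposition that a finite commutative division algebra is a field then forces $Alg[A]\cong M_1(D)=D$ with $D$ a field.

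Finally, $D$ is a field extension of $F_p$ of dimension $2$, so $|D|=p^2$, and by uniqueness of the finite field of a given order we conclude $Alg[A]\cong F_{p^2}$. Equivalently, one may write the isomorphism down directly: fixing a root $\theta\in F_{p^2}$ of the irreducible polynomial $\chi_A$, the assignment $A\mapsto\theta$ extends by $F_p$-linearity to a ring isomorphism $Alg[A]\to F_p(\theta)=F_{p^2}$, with inverse determined by $\theta\mapsto A$. As the authors note, passing from $SL_2(F_p)$ to $GL_2(F_p)$ changes nothing here, since the condition $\det A=1$ is never used in the argument.
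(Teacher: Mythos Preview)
Your argument is correct and, in fact, cleaner than the paper's. You and the paper establish the same two facts---that $Alg[A]$ is two-dimensional over $F_p$ and that it is a field---but by different routes. The paper writes down an explicit $F_p$-linear map $\varphi\colon yA+xE\mapsto x\cdot 1+y\lambda$ into $F_{p^2}$ (where $\lambda$ is an eigenvalue of $A$) and then argues injectivity by hand: if $y_1A+x_1E=y_2A+x_2E$ then $A$ would be a scalar matrix, contradicting irreducibility of $\chi_A$. You instead invoke the evaluation map $F_p[x]\to Alg[A]$, identify the kernel with $(\chi_A)$ via Cayley--Hamilton and semisimplicity, and read off $Alg[A]\cong F_p[x]/(\chi_A)\cong F_{p^2}$ directly. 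Your route makes the logical structure transparent and avoids the somewhat ad hoc verification in the paper; it also makes explicit, as you note, that the conclusion genuinely requires $\chi_A$ to be irreducible over $F_p$ (equivalently $\lambda_1,\lambda_2\in F_{p^2}\setminus F_p$), a hypothesis the paper uses but states only obliquely. The paper's approach, on the other hand, has the minor advantage of exhibiting the isomorphism concretely enough to read off how to find $\sqrt{A}$ by solving $(x+\lambda y)^2=\lambda$ in $F_{p^2}$.
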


\begin{proof}
We show that algebra $Alg \left[ M \right]=\left\langle E,A \right\rangle $ is isomorphic to finite field ${{F}_{{{p}^{2}}}}$.
As well-known from Galois theory, a quadratic extension of $F_p$
can be constructed by involving of any external element.
As well-known from Galois theory, a quadratic extension of ${{F}_{p}}$ can be constructed by involving of any external element $g\in {{F}_{{{p}^{2}}}}\backslash {{F}_{p}}$  relatively to ${{F}_{p}}$. We denote this element by $i$, in particular, for $p=4m+3$ it may be an element satisfying the relation ${{i}^{2}}=-1$.
Note that the matrix of the rotation by 90 degrees, that is a matrix
$$I:= \left( \begin{array}{cc}
  0\,\,\,\,&1 \\
 -1\,\,&0 \\
 \end{array} \right) ={{\rho }_{90}}$$
 satisfies this relation and can used as an example of matrix $A$. In case when $p=4m+3$  such matrix $J:\,\,\varphi (J)=j$,  ${{j}^{2}}=-1$ exists too.

Obviously $\det A=1$, that's why $A\in S{{L}_{2}}({{F}_{p}})$ and ${{\mu }_{A}}(x)$ is irreducible.
We define mapping $\varphi :\,\,{{y}_{1}}A+{{x}_{1}}E\,\to ae+b\lambda ;\,\,\,\,a,b\in {{\text{F}}_{p}}$.
The mapping $\varphi $ can be more broadly described, in $S{{L}_{2}}[{{F}_{p}}]$ such a way that a matrix $A$   is found such that ${{A}^{2}}=-E$, then its e.g. $\lambda $ is assigned to it in the field ${{F}_{{{p}^{2}}}}$, while $\lambda \in {{F}_{{{p}^{2}}}}\backslash {{F}_{p}}$.
$\varphi :\,\,{{y}_{1}}A+{{x}_{1}}E\,\to ae+b\lambda ;\,\,\,\,a,b\in {{\text{F}}_{p}}$.
According to assumption of Lemma the matrix $A$ is semisimple and has not multiple eigenvalues (e.g.) which are not squares in ${{\text{F}}_{p}}$, so ${{\chi }_{A}}\left( x \right)$  is irreducible because of definition of semisimple matrix and condition ${{\lambda }_{1}}\ne {{\lambda }_{2}}$. According to Lemma about Frobenius automorphism its eigenvalues are conjugated in ${{\text{F}}_{{{p}^{2}}}}$.
The method of constructing of $\sqrt{A}$ is the following. Having isomorphism $A\lg \left[ A \right]=\left\langle E,A \right\rangle \simeq {{\text{F}}_{{{p}^{2}}}}$ we set a correspondence $\lambda \leftrightarrow A$ and correspondence between groups operations in ${{\text{F}}_{{{p}^{2}}}}$ and $A\lg \left[ A \right]$.  Therefore solving equation ${{\left( x+\lambda y \right)}^{2}}=\lambda $ relatively coefficients $x,\,\,y\in {{\text{F}}_{p}}$ we obtain coefficients for expression for $\sqrt{A}$ i.e. $\sqrt{A}=x+Ay$.
To prove the isomorphism, we establish a bijection between the generators of the algebra $A\lg \left[ A \right]=\left\langle E,A \right\rangle $ and the field ${{F}_{{{p}^{2}}}}$. It is necessary to establish in more detail that $A\leftrightarrow \lambda $ and  $E\leftrightarrow e$ also  the correspondence between the neutral elements of both structures, i.e. $\varphi \left( {\bar{0}} \right)=0$ where $0$ is the zero matrix.
To complete proof, it remains to show that the kernel of this homomorphism $\varphi $ is trivial. To do this, we show that among the elements of the algebra there are no identical ones. The surjectivity of $\varphi $ is obvious.  From the opposite,  we assume ${{y}_{1}}A+{{x}_{1}}E={{y}_{2}}A+{{x}_{2}}E$, ${{x}_{i}},\,{{y}_{i}}\in {{F}_{p}}$.  Then ${{y}_{1}}A+{{x}_{1}}E={{y}_{2}}A+{{x}_{2}}E$ it yields that
$\left( {{y}_{1}}-{{y}_{2}} \right)E=\left( {{x}_{1}}-{{x}_{2}} \right)A$,
which is impossible since the characteristic polynomial of the matrix $A$ is irreducible but  the characteristic polynomial of the identity matrix is reducible. Therefore, our algebra $A\lg \left[ A \right]$ is isomorphic to the completely linear space of linear polynomials from $E$ and $A$.
In the similar way we prove that polynomial of form $xe+y\lambda $ where $x,\,\,y\in {{F}_{p}}$ do not repeat. The proof is based on oppositive assumption about coinciding ${{x}_{1}}e+{{y}_{1}}\lambda ={{x}_{2}}e+{{y}_{2}}\lambda$ of polynomial with different coefficients. Then equality ${{x}_{1}}e+{{y}_{1}}\lambda ={{x}_{2}}e+{{y}_{2}}\lambda$  implies that $\left( {{y}_{1}}-{{y}_{2}} \right)\lambda =\left( {{x}_{1}}-{{x}_{2}} \right)e$ i.e. ${{y}_{1}}={{y}_{2}}$ and ${{x}_{1}}={{x}_{2}}$ that contradicts to assumption.
\end{proof}

\begin{thm}
If a matrix $A\in G{{L}_{2}}({{F}_{p}})$ is semisimple with different eigenvalues but with the same quadratic, then $\sqrt{A}\in G{{L}_{2}}({{F}_{p}})$ iff at least one an eigenvalue ${{\lambda }_{i}}$, $ i\in \{1,2\}$  of $A$ satisfies:

 \begin{equation*}
(\frac{{{\lambda }_{i}}}{p})=1
 \, \, in \, \, the \, \, square \, \, extention \, \, that \, \, is \, \, {{F}_{p^2}}.
 \end{equation*}
\end{thm}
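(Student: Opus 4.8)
The plan is to transport the matrix equation $X^2 = A$ into the finite field $F_{p^2}$ and there solve it by a quadratic--residue test. The hypothesis that $A$ is semisimple with two \emph{distinct} eigenvalues satisfying \emph{one and the same} irreducible quadratic over $F_p$ means precisely that the characteristic polynomial $\chi_A(x)$ is irreducible over $F_p$; hence its roots are Frobenius--conjugate, $\lambda_2 = \lambda_1^{p} \in F_{p^2} \setminus F_p$. This is exactly the setting of Lemma \ref{IsomwithField}, which furnishes an algebra isomorphism $\varphi \colon Alg[A] = \langle E, A\rangle \to F_{p^2}$ with $\varphi(A) = \lambda := \lambda_1$ and $\varphi(E) = 1$ (the author has already remarked that this lemma does not use $\det A = 1$, so it is valid over $GL_2(F_p)$).

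First I would show that \emph{every} square root of $A$ necessarily lies inside the small algebra $Alg[A]$. If $B^2 = A$, then $B$ commutes with $A = B^2$; and since $A$ has two distinct eigenvalues it is non--derogatory (cyclic), so its centralizer in the full matrix ring $M_2(F_p)$ coincides with the polynomial algebra $F_p[A] = Alg[A]$. Consequently any candidate root has the form $B = xE + yA$ with $x,y \in F_p$, and corresponds under $\varphi$ to a single element $\beta = \varphi(B) \in F_{p^2}$.

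Next I would transport the equation across $\varphi$. Because $\varphi$ is a ring isomorphism, the relation $B^2 = A$ holds in $Alg[A]$ if and only if $\beta^2 = \lambda$ holds in $F_{p^2}$; this is also consistent with Lemma \ref{eigenval}. Hence a root $B \in GL_2(F_p)$ exists if and only if $\lambda$ is a square in $F_{p^2}$, that is $(\frac{\lambda}{p}) = 1$ in $F_{p^2}$. For the sufficiency I would run the correspondence backwards: given $\beta$ with $\beta^2 = \lambda$, the matrix $B = \varphi^{-1}(\beta) \in Alg[A]$ satisfies $B^2 = A$, and it is invertible because $A$ invertible forces $\lambda \neq 0$, hence $\beta \neq 0$; therefore $B \in GL_2(F_p)$. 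Finally I would observe that the criterion does not depend on which of the two eigenvalues is tested: since $p$ is odd and $\lambda_2 = \lambda_1^{p}$, the quantity $\lambda_i^{(p^2-1)/2} \in \{1,-1\}$ is fixed by the Frobenius, so $\lambda_1$ is a square in $F_{p^2}$ exactly when $\lambda_2$ is, which is why it suffices that \emph{at least one} $\lambda_i$ be a quadratic residue.

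The main obstacle is the centralizer step: one must guarantee that a root $B$, a priori only an arbitrary element of $GL_2(F_p)$, cannot leave the two--dimensional commutative algebra $Alg[A]$. This is exactly where regular semisimplicity (distinctness of the eigenvalues, equivalently $A$ non--derogatory) is indispensable; if $A$ had a repeated eigenvalue its centralizer would be strictly larger than $Alg[A]$ and the clean reduction to a single square--root equation in $F_{p^2}$ would break down.
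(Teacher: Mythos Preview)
Your proof is correct and follows essentially the same route as the paper: interpret the hypothesis as $\chi_A$ irreducible over $F_p$, invoke Lemma~\ref{IsomwithField} to identify $Alg[A]=\langle E,A\rangle$ with $F_{p^2}$, transport $X^2=A$ to $\beta^2=\lambda$ in $F_{p^2}$, and use Frobenius conjugacy $\lambda_2=\lambda_1^{\,p}$ to see that the quadratic--residue condition is symmetric in the two eigenvalues. Your explicit centralizer argument---that any $B$ with $B^2=A$ must already lie in $Alg[A]$ because a non--derogatory matrix has centralizer $F_p[A]$---supplies a step the paper's proof leaves implicit, so your version is in fact the more complete of the two.
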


\begin{proof}
Firstly we consider most complex and interesting case when $A$ is not diagonalizable, then ${{\chi }_{A}}\left( x \right)$ is irreducible over  ${{F}_{p}}$. By assumption, the matrix is semisimple and its characteristic polynomial is irreducible. So root $\lambda $ of ${{\chi }_{A}}(x)$ belongs to the quadratic extension of the field ${{F}_{p}}$. Since each element of ${{F}_{{{p}^{2}}}}$ can be presented in form $a+b\lambda ,\,\,\,a,b\in {{F}_{p}}$, then we can construct mapping of matrix algebra generators $E$ and $A$  in generators of ${{F}_{{{p}^{2}}}}$ and apply aforementioned Lemma \ref{IsomwithField} about isomorphism establish correspondence between property be square in ${{\text{F}}_{{{p}^{2}}}}$ and in  $Alg \left[ A \right]=\left\langle E,A \right\rangle $. If one e.v. ${{\lambda}_{i}}$ is square in $F_p^2$ then so is second e.v. because of they a conjugated as roots of characteristic polynomial $\chi_A(x)$ by theorem about Frobenius automorphism (Frobenius endomorphism in perfect field became to be automorphism).
\end{proof}

\begin{exm}
Consider matrix $A=-E$, where $E$ is identity matrix in $S{{L}_{2}}({{F}_{3}})$satisfying conditions of Theorem 3 because of $(\frac{-1}{9})=1$ in ${{\text{F}}_{9}}$.
Then the matrix $\left( \begin{array}{cc}
   \,\,0\,\,\,\,2 \\
  -2\,\,0 \\
\end{array} \right)\in S{{L}_{2}}({{F}_{3}})$  is square root for $A$. Indeed ${{I}^{2}}=-E$.

Another root of equation ${{X}^{2}}=A$ is matrix of elliptic type realizing rotation on 90 ${{\rho }_{90}}=\left( \begin{array}{cc}
  \,\,\, 0\,\,\,\,1 \\
  -1\,\,0 \\
\end{array} \right)=I$, because of ${{I}^{2}}=-E$. The matrix $2I$ is the square in $G{{L}_{2}}\left( {{F}_{3}} \right)$ because of existing such an element ${{\left( \begin{array}{cc}
   \,\,\,1\,\,\, 1 \\
  -1\,\,\,\,1 \\
\end{array} \right)^{2}}}=2\left(\begin{array}{cc}
   \,\,  \ 0\,\,\,\,1 \\
  -1\,\,0 \\
\end{array} \right)=2I$.

\end{exm}

\begin{thm} \label{diagonal form}  If matrix $A\in S{{L}_{2}}({{F}_{p}})$ ($A \in GL({F}_{p})$) admits diagonal Jordan form over ${{\text{F}}_{p}}$, then $\sqrt{A}\in S{{L}_{2}}({{F}_{p}})$ ($GL({F}_{p})$) if and only  if $(\frac{{{\lambda }_{1}}}{p})=1$ and $(\frac{{{\lambda }_{2}}}{p})=1$ over ${{\text{F}}_{p}}$.
\end{thm}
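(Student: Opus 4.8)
The plan is to pass to the eigenbasis of $A$ and reduce the matrix equation $X^2=A$ to a scalar square-root problem on the diagonal, treating the two implications separately. Throughout I assume the two eigenvalues are distinct, $\lambda_1\neq\lambda_2$; the coincident case $\lambda_1=\lambda_2$ (i.e. $A$ scalar) is governed by Theorem \ref{CriterionPSLJordan} and the surrounding scalar-matrix discussion and behaves differently, as explained in the last paragraph. Since $A$ is diagonalizable over $\mathbb{F}_p$ there is $P\in GL_2(\mathbb{F}_p)$ with $P^{-1}AP=\mathrm{diag}(\lambda_1,\lambda_2)$, and because $\lambda_1\neq\lambda_2$ the centralizer of $A$ in the matrix algebra is exactly $\mathbb{F}_p[A]=\langle E,A\rangle$, which in this basis is the algebra of diagonal matrices and is isomorphic to $\mathbb{F}_p\times\mathbb{F}_p$ (the split counterpart of the field isomorphism of Lemma \ref{IsomwithField}). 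This structural fact drives both directions.

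For \textbf{sufficiency}, suppose $\left(\frac{\lambda_1}{p}\right)=\left(\frac{\lambda_2}{p}\right)=1$. Then $\mu_i:=\sqrt{\lambda_i}\in\mathbb{F}_p$ exist, and I set $B:=P\,\mathrm{diag}(\mu_1,\mu_2)\,P^{-1}$, so $B$ has entries in $\mathbb{F}_p$ and $B^2=A$. For the $GL_2$ claim nothing more is needed, since $\mu_i\neq0$ forces $B\in GL_2(\mathbb{F}_p)$. For the $SL_2$ claim I must arrange $\det B=\mu_1\mu_2=1$: from $(\mu_1\mu_2)^2=\lambda_1\lambda_2=\det A=1$ one gets $\mu_1\mu_2=\pm1$, and if the sign is $-1$ I replace $\mu_2$ by $-\mu_2$ (still a square root of $\lambda_2$), giving $\det B=1$. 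Note that in the $SL_2$ case the two Legendre symbols automatically agree, since $\left(\frac{\lambda_1}{p}\right)\left(\frac{\lambda_2}{p}\right)=\left(\frac{\det A}{p}\right)=1$, so the hypothesis reduces to a single residue condition; in $GL_2$ the determinant is free and both conditions are genuinely required.

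For \textbf{necessity}, suppose $B\in SL_2(\mathbb{F}_p)$ (resp. $GL_2(\mathbb{F}_p)$) satisfies $B^2=A$. Then $B$ commutes with $A=B^2$, hence $B$ maps each one-dimensional $\mathbb{F}_p$-eigenspace of $A$ into itself; since such an eigenspace is a line over $\mathbb{F}_p$ and $B$ has entries in $\mathbb{F}_p$, we get $Bv_i=\mu_i v_i$ with $\mu_i\in\mathbb{F}_p$. Thus $B=P\,\mathrm{diag}(\mu_1,\mu_2)\,P^{-1}$, and by Lemma \ref{eigenval} the eigenvalues $\mu_i^2$ of $B^2=A$ equal $\lambda_i$; so each $\lambda_i$ is a nonzero square in $\mathbb{F}_p$, i.e. $\left(\frac{\lambda_i}{p}\right)=1$. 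This closes the equivalence for distinct eigenvalues.

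The \textbf{main obstacle} is exactly the step in necessity that forces the eigenvalues of $B$ into the base field $\mathbb{F}_p$ rather than merely $\mathbb{F}_{p^2}$: a priori a square root defined over $\mathbb{F}_p$ could have Frobenius-conjugate eigenvalues in $\mathbb{F}_{p^2}\setminus\mathbb{F}_p$, and it is the distinctness $\lambda_1\neq\lambda_2$ (making the centralizer split and $B$ simultaneously diagonalizable with $A$ over $\mathbb{F}_p$) that rules this out. This is also where the hypothesis is sharp: in the coincident/scalar case $A=-E$ with $p\equiv3\pmod4$ the eigenvalue $-1$ is a non-residue, yet $\rho_{90}=\left(\begin{smallmatrix}0&1\\-1&0\end{smallmatrix}\right)$ is a square root of $A$ in $SL_2(\mathbb{F}_p)$, so a nondiagonal ``twisted'' root survives. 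Hence the diagonalization argument must be confined to the distinct-eigenvalue situation, and the repeated-eigenvalue case should be routed through Theorem \ref{CriterionPSLJordan} and the scalar-matrix discussion rather than through this residue criterion.
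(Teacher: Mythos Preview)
Your argument is correct, and it takes a genuinely different route from the paper's proof. For necessity the paper argues by contradiction: assuming $\left(\frac{\lambda_1}{p}\right)=\left(\frac{\lambda_2}{p}\right)=-1$, it places each $\mu_i=\sqrt{\lambda_i}$ in $\mathbb{F}_{p^2}\setminus\mathbb{F}_p$, observes that Frobenius pairs $\mu_i$ with $-\mu_i$, and then derives a contradiction from the fact that the minimal polynomial of $B$ over $\mathbb{F}_p$ has degree at most $2$ yet would need to accommodate the Frobenius orbits of both $\mu_1$ and $\mu_2$ (forcing $\lambda_1=\lambda_2$). Your approach instead uses the centralizer structure directly: since $B$ commutes with $A$ and $A$ has two distinct one-dimensional eigenspaces defined over $\mathbb{F}_p$, $B$ must be diagonal in the same basis with entries in $\mathbb{F}_p$, whence $\lambda_i=\mu_i^2$ are squares. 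This bypasses any Galois/Frobenius reasoning and is shorter and more transparent; the paper's route, on the other hand, makes explicit why the obstruction lives in $\mathbb{F}_{p^2}$ and connects naturally to the field-isomorphism viewpoint of Lemma~\ref{IsomwithField}. Your careful isolation of the hypothesis $\lambda_1\neq\lambda_2$ and the $A=-E$ counterexample is also sharper than the paper's statement, which tacitly invokes $\lambda_1\neq\lambda_2$ only inside the proof and defers the scalar case to a separate theorem.
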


\begin{proof}
From condition $(\frac{{{\lambda }_{1}}}{p})=1$ and $(\frac{{{\lambda }_{2}}}{p})=1$ it is followed, that ${{\mu }_{A}}\left( x \right)$ is reduced over ${{\text{F}}_{p}}$. Therefore why ${{\mu }_{1}},\,\,{{\mu }_{2}}\in {{\text{F}}_{p}}$ exist ${{\mu }_{B}}\left( x \right)$ over ${{\text{F}}_{p}}$ exists for matrix $B:\,\,{{B}^{2}}=A$. Assume that  $(\frac{{{\lambda }_{1}}}{p})=-1,\,\,(\frac{{{\lambda }_{2}}}{p})=-1$ prove, that while $\sqrt{A}\notin S{{L}_{2}}({{F}_{p}})$. We use proof by contradiction. Let  $(\frac{{{\lambda }_{1}}}{p})=-1,\,\,(\frac{{{\lambda }_{2}}}{p})=-1$  therefore roots from eigenvalues ${{\lambda }_{1}},\,\,{{\lambda }_{2}}$ in general belongs to ${{\text{F}}_{{{p}^{2}}}}$ while its roots ${{\mu }_{1}},\,\,{{\mu }_{2}}$ is not conjugated as roots from different values of ${{\lambda }_{1}},\,\,{{\lambda }_{2}}$.

Let's find minimal polynomial for $B=\sqrt{A}$.
    Minimal polynomial of matrix $B$ is ${{\mu }_{B}}(x)={{x}^{2}}-bx+c$ and it has different roots ${{\mu }_{1}},\,\,{{\mu }_{2}}$, where ${{\mu }_{1}}+{{\mu }_{2}}=Tr(B)=b$ и $\det B={{\mu }_{1}}{{\mu }_{2}}$. From the existence of diagonal representation for $A$ reducibility of ${{\mu }_{A}}\left( x \right)$ follows. From the reducibility of ${{\mu }_{A}}\left( x \right)$ over ${{\text{F}}_{p}}$ and the fact that ${{\lambda }_{1}}\ne {{\lambda }_{2}}$ follows ${{\mu }_{1}},\,\,{{\mu }_{2}}$ is not  conjugated as the roots of different values of ${{\lambda }_{1}},\,\,{{\lambda }_{2}}$ and it is obvious that $\mu _{1}^{2}\,\ne \,\mu _{2}^{2}$. But the root ${{\mu }_{1}}$ is conjugated with $-{{\mu }_{1}}$ and ${{\mu }_{1}}\in {{\text{F}}_{{{p}^{2}}}}\backslash {{\text{F}}_{p}}$. But $-{{\mu }_{1}}$ is also a root, since ${{\left( \pm {{\mu }_{1}} \right)}^{2}}={{\lambda }_{1}}$ therefore it can be the root for ${{\mu }_{B}}(x)$. Similar situation is with root ${{\mu }_{2}}$ and $-{{\mu }_{2}}$. Therefore, we indicated as many as 4 roots for ${{\mu }_{B}}(x)$ but $B\in S{{L}_{2}}({{F}_{{{p}^{2}}}})$ therefore $\deg \left( {{\mu }_{B}}(x) \right)=2$. This contradiction arise from the assumption that$\sqrt{A}\in S{{L}_{2}}({{F}_{p}})$ on condition $(\frac{{{\lambda }_{1}}}{p})=-1,\,\,(\frac{{{\lambda }_{2}}}{p})=-1$.
\end{proof}

\begin{rem} \label{diagonal form and 2 non quadratic residue}
 If $A\in S{{L}_{2}}({{F}_{p}})$ admits diagonal Jordan form over ${{\text{F}}_{p}}$ and $(\frac{{{\lambda }_{1}}}{p})=-1,\,\,(\frac{{{\lambda }_{2}}}{p})=1$,  then such case does not give the existence of solution of ${{X}^{2}}=A$ in $S{{L}_{2}}({{F}_{p}})$.
\end{rem}

\begin{proof}  The condition  $(\frac{{{\lambda }_{1}}}{p})=-1$ means, that $\sqrt{{{\lambda }_{1}}}={{\beta }_{1}}\in {{\text{F}}_{{{p}^{2}}}}\backslash {{\text{F}}_{p}}$  and simultaneously $\sqrt{{{\lambda }_{2}}}={{\beta }_{2}}\in {{\text{F}}_{p}}$,  therefore ${{\beta }_{1}}+{{\beta }_{2}}=Tr(B)\notin {{F}_{p}}$. This implies non-existing of ${{\mu }_{B}}\left( x \right)$ over ${{\text{F}}_{p}}$.
\end{proof}

\begin{thm}
(Theorem 2.) If the matrix $A\in S{{L}_{2}}({{F}_{p}})$ is semisimple and diagonalizable over${{\text{F}}_{p}}$ and $(\frac{{{\lambda }_{1}}}{p})=(\frac{{{\lambda }_{2}}}{p})=-1$, then  for existing a root $\sqrt{A}$, it is necessary and sufficient that $A$ is similar to scalar matrix $D$.
\end{thm}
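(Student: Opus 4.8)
The plan is to prove the two implications separately, exploiting the arithmetic of the residue symbol together with the Frobenius automorphism of $F_{p^2}/F_p$. Throughout I would keep in mind that $\det A=\lambda_1\lambda_2=1$, so the hypothesis $\left(\frac{\lambda_1}{p}\right)=\left(\frac{\lambda_2}{p}\right)=-1$ is consistent, since a product of two non-residues is a residue, matching $\left(\frac{\det A}{p}\right)=1$. For \textbf{sufficiency}, I would first observe that if $A$ is diagonalizable and similar to a scalar matrix $D$, then in fact $A=D$, because any conjugate of a scalar matrix equals that scalar matrix. Writing $A=\lambda E$, the condition $\det A=1$ forces $\lambda^2=1$, i.e. $\lambda=\pm1$; as $1$ is always a quadratic residue, the hypothesis $\left(\frac{\lambda}{p}\right)=-1$ rules out $\lambda=1$ and leaves $\lambda=-1$, which in turn forces $p\equiv 3\pmod 4$. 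Thus $A=-E$, and it remains to exhibit a root: since $-1$ is a non-residue, $x^2+1$ is irreducible over $F_p$, and its companion matrix $B=\begin{pmatrix}0&-1\\1&0\end{pmatrix}$ satisfies $\det B=1$ and $B^2=-E=A$, so $\sqrt{A}\in SL_2(F_p)$.

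For \textbf{necessity} I would argue by contradiction, assuming a root $B\in SL_2(F_p)$ exists while $A$ is not scalar, i.e. $\lambda_1\neq\lambda_2$. Since $A=B^2$, the eigenvalues $\mu_1,\mu_2$ of $B$ satisfy $\mu_i^2=\lambda_i$; as $\lambda_1\neq\lambda_2$ these are distinct, so $B$ is diagonalizable with $\mu_1\neq\mu_2$, and because $\left(\frac{\lambda_i}{p}\right)=-1$ each $\mu_i$ lies in $F_{p^2}\setminus F_p$. The key step is the behaviour under the Frobenius automorphism $x\mapsto x^p$: since $\lambda_i\in F_p$ we have $(\mu_i^p)^2=\lambda_i^p=\lambda_i$, so $\mu_i^p\in\{\mu_i,-\mu_i\}$, and as $\mu_i\notin F_p$ the conjugate must be $\mu_i^p=-\mu_i$. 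Now $B$ has entries in $F_p$, so its characteristic polynomial lies in $F_p[x]$ and its root multiset $\{\mu_1,\mu_2\}$ must be stable under Frobenius. Since Frobenius sends this set to $\{-\mu_1,-\mu_2\}$ and $-\mu_i\neq\mu_i$, stability forces $\mu_2=-\mu_1$, whence $\lambda_2=\mu_2^2=\mu_1^2=\lambda_1$, contradicting $\lambda_1\neq\lambda_2$. Hence no root exists unless $A$ is scalar, completing necessity.

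The main obstacle, and the reason this statement is not already subsumed by Theorem \ref{diagonal form}, is precisely this borderline scalar case. Theorem \ref{diagonal form} was proved under the tacit assumption of distinct eigenvalues, where two non-residue eigenvalues genuinely preclude a root; the present theorem isolates the single exception $A=-E$, occurring only for $p\equiv 3\pmod 4$, in which the coincidence $\mu_2=-\mu_1$ is \emph{allowed} rather than forbidden and in fact yields $\det B=\mu_1\mu_2=1$ together with $tr(B)=\mu_1+\mu_2=0\in F_p$. I would therefore phrase the Frobenius-stability argument so that it produces the contradiction exactly when $\lambda_1\neq\lambda_2$, and verify independently that in the excluded scalar case the companion matrix of $x^2+1$ supplies the root. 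The remaining bookkeeping, namely that semisimplicity and diagonalizability over $F_p$ guarantee $B$ is diagonalizable and that all four values $\pm\mu_1,\pm\mu_2$ are accounted for, is routine and follows the pattern already used in Theorem \ref{diagonal form}.
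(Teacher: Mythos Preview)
Your argument is correct on both directions. For sufficiency you correctly note that a scalar matrix in $SL_2(F_p)$ must be $\pm E$, that the non-residue hypothesis forces $A=-E$ with $p\equiv 3\pmod 4$, and that the companion matrix of $x^2+1$ supplies a root in $SL_2(F_p)$. For necessity, the Frobenius argument is clean: each $\mu_i\in F_{p^2}\setminus F_p$ satisfies $\mu_i^p=-\mu_i$ (since $(\mu_i^p)^2=\lambda_i$ and $\mu_i^p\neq\mu_i$), and Galois-stability of the root set $\{\mu_1,\mu_2\}$ of $\chi_B\in F_p[x]$ then forces $\mu_2=-\mu_1$, hence $\lambda_1=\lambda_2$.

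Your route differs from the paper's primary proof, which is a direct matrix computation: it passes to the diagonal form $D=\mathrm{diag}(d_1,d_2)$, writes a putative root $M$ in a normalized shape $\tilde M=\begin{pmatrix}m_{11}&1\\ m_{21}&m_{22}\end{pmatrix}$ via conjugation, expands $\tilde M^2=D$ into the system $m_{11}^2+m_{21}=d_1$, $m_{22}^2+m_{21}=d_2$, $m_{11}+m_{22}=0$, and reads off $d_1=d_2$. The paper then records, in a separate remark, an alternative ``Way~2'' via the trace: from $\beta_1=t-\beta_2$ one gets $\lambda_1=t^2-2t\beta_2+\lambda_2$, so $2t\beta_2\in F_p$ forces $t=0$ and hence $\lambda_1=\lambda_2$. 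Your Frobenius-stability argument is conceptually closest to this second proof, but you make the Galois mechanism explicit, which both shortens the computation and explains structurally why the exceptional scalar case $A=-E$ survives: it is exactly the situation in which the forced relation $\mu_2=-\mu_1$ is compatible with $\det B=\mu_1\mu_2=1$. The paper's matrix-equation approach, by contrast, has the advantage of being entirely elementary and of simultaneously exhibiting the parametrization of all roots when $d_1=d_2$, at the cost of the normalization step (the conjugation to $\tilde M$), which tacitly assumes $m_{12}\neq 0$.
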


\begin{proof}
From the facts that  $(\frac{{{\lambda }_{1}}}{p})=(\frac{{{\lambda }_{2}}}{p})=-1$ and the square of diagonal matrix is again the diagonal matrix  follows the existence of root only in the off diagonal form, therefore we must find the solution $M$ among the set of non-diagonalizable matrices.
$ D=\left( \begin{array}{cc}
   & {{d}_{1}}\,\,\,0 \\
 & 0\,\,\,{{d}_{2}} \\
 \end{array} \right) $
is the diagonal representation of matrix $A$, and let
 \begin {equation}\label{D}
 D={{M}^{2}},
\end {equation}

where $M\in S{{L}_{2}}({{F}_{p}})$, Because of $(\frac{{{d}_{1}}}{p})=(\frac{{{d}_{2}}}{p})=-1$ there is a  root in non-diagonal form. Also we note that there is a conjugation matrix $X$,

$$X=\left( \begin{array}{cc}
  & m_{11}^{-1}\,\,\,0 \\
 & 0\,\,\,\,\,\,m_{21}^{-1} \\
\end{array}  \right),$$  transforming  $M$  to $\tilde{M}$, where

$$\tilde{M}=~\left( \begin{array}{cc}
  & {{m}_{11}}\,\,\,\,1 \\
 & {{m}_{21}}\,\,{{m}_{22}} \\
\end{array} \right)$$.

Let's transform the equality $D={{M}^{2}}$ into $XD{{X}^{-1}}=XM{{X}^{-1}}XM{{X}^{-1}}$, where $XM{{X}^{-1}}=\tilde{M}$.
Note that $D$ and $XD{{X}^{-1}}$ have identical eigenvalues. Therefore we can solve the equation \eqref{D} for $XD{{X}^{-1}}$.  Let's consider matrix equation  $D={{M}^{2}}$ ,  let's transform it by conjugation
$D=XD{{X}^{-1}}=XM{{X}^{-1}}\cdot XM{{X}^{-1}}=\tilde{M}\tilde{M}={{\tilde{M}}^{2}}$.

wherein $M=\left( \begin{array}{cc}
  & {{m}_{11}}\,\,\,\,{{m}_{12}} \\
 & {{m}_{21}}\,\,{{m}_{22}} \\
\end{array} \right)$
and
$XM{{X}^{-1}}=\left( \begin{array}{cc}
  & {{m}_{11}}\,\,\,\,1 \\
 & {{m}_{21}}\,\,{{m}_{22}} \\
\end{array} \right),$

for obtaining such conjugation matrix we use
$X=\left( \begin{array}{cc}
  & m_{11}^{-1}\,\,\,0 \\
 & 0\,\,\,\,\,\,m_{21}^{-1} \\
\end{array} \right).$

Note that since $D$ is a diagonal matrix, then it belongs to the commutative subgroup of diagonal matrices from  $S{{L}_{2}}({{F}_{p}})$, lets denote it as  $DS{{L}_{2}}({{F}_{p}})$.
Therefore and  $~XD{{X}^{-1}}$  is also a diagonal matrix. Moreover, due to the commutativity of the field ${{F}_{p}}$ we have $~XD{{X}^{-1}}=D.$
Now let's solve the matrix equation  for the reduced $\tilde{M}$

 \begin {equation}\label{XDX}
 D=~XD{{X}^{-1}}=(XM{{X}^{-1}})(XM{{X}^{-1}})={{\tilde{M}}^{2}},
\end {equation}

Note that equations \eqref{XDX} and \eqref{D} are equivalent since they are obtained by similarity transformations.

Note that equations (2) и (1) are equivalent since they are obtained by similarity transformations.
Let's write down the equation $\left( \begin{array}{cc}
  & {{d}_{1}}\,\,\,0 \\
 & 0\,\,\,{{d}_{2}} \\
\end{array} \right)={{\left( \begin{array}{cc}
  & {{m}_{11}}\,\,\,\,1 \\
 & {{m}_{21}}\,\,\,{{m}_{22}} \\
\end{array} \right)}^{2}}={{\tilde{M}}^{2}}$.

Thence we obtain the system of equations
$$\left\{ \begin{array}{cc}
  {{m}_{21}}+m_{11}^{2}={{d}_{1}} \\
 {{m}_{21}}+m_{22}^{2}={{d}_{2}} \\
 {{m}_{11}}+{{m}_{22}}=0, \\
\end{array} \right.$$
by substitution from the equation 3) ${{m}_{22}}=-{{m}_{11}}$ into equations 1) and 2) we obtain \[2){{m}_{21}}+m_{22}^{2}={{d}_{2}} \Rightarrow \]\[ \Rightarrow {{m}_{21}}+{{(-m_{11}^{{}})}^{2}}={{d}_{2}}\] also we take into consideration equation 1) $m_{11}^{2}+{{m}_{21}}={{d}_{1}}$. Thence ${{d}_{1}}={{d}_{2}}$ or more conveniently $d={{d}_{1}}={{d}_{2}}$. Wherein $d$ doesn't have to be a quadratic residue. Therefore the conditions $(\frac{d}{p})=-1$ of theorem are met.
\end{proof}

The previous theorem can be proved in following way using finite field equations.
\begin{rem}
 If the matrix $A \in SL_2 (F_p)$   is semisimple and diagonalizable over $F_p$  and $(\frac{{{\lambda }_{1}}}{p}) = (\frac{{{\lambda }_{2}}}{p}) = -1$, then  for existing $\sqrt{A} \in SL_2 (F_p)$, it is necessary and sufficient that $A$ is similar to scalar matrix.
\end{rem}

\begin{proof}  (Way 2.) Let according to condition both e.v. $(\frac{{{\lambda }_{1}}}{p}) = (\frac{{{\lambda }_{2}}}{p}) \neq -1$  and ${\lambda }_{1} \neq {\lambda }_{2}$.

 If we assume, that  equation $X^2=A$ is solvable over $F_p$  then e.v. $\beta_1, \beta_2$  of $B$  are such that $\beta_1, \beta_2 \in F_p^2 \backslash F_p$
because of $\alpha$  is non-square over $F_p$. Since we assume existing of root from $A$  in $A \in SL_2 (F_p)$, then trace of  $B$  (denoted by $ $) is the following   $Tr_B=\beta_1+\beta_2$.  Consider expression $\beta_1$  by trace $\beta_1= t-\beta_2$    then its square ${\beta_1}^2= (t-\beta_2)^2$  give us  ${\beta_1}^2= t^2 -2t\beta_2 - \beta_2^2$. In the LHS of the equality we have that $\beta_1^2 \in F_p$   because $\beta_1$   is root from the element $\alpha_1 \in F_p$. But in RHS $t^2-2t\beta_2+\beta^2_2 \in F_p $ iff $t=0$  because of  $\beta_1, \beta_2 \in F_p^2 \backslash F_p $  as root from $\alpha_2$  which is not square in $F_p$. But we assume that $B$  is arbitrary diagonal matrix over $F_p$  that means $t\neq 0$. The obtained contradiction follows from our assumption that $A$  is not scalar matrix.
\end{proof}

Recall the following property about verbal width of commutator subgroup $G{'}$.
\begin{lem} \label{G^2}  The commutator subgroup is subgroup of $G^{2}$  i.e. $G{'}<G^{2}$.
\end{lem}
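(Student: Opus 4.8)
The plan is to deduce this inclusion directly from the commutator identity recorded in the introduction, which expresses every commutator as a product of three squares. Recall that $G^{2}$ here should be read as the verbal subgroup generated by all squares of elements of $G$; unlike the set $S(G)$ of squares (which need not be closed under multiplication, as was shown for $A_n$ in Property \ref{closedness}), the verbal subgroup $G^{2}$ is genuinely a subgroup, so it suffices to exhibit a generating set of $G'$ lying inside $G^{2}$.

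First I would fix arbitrary $g,h\in G$ and verify the identity $ghg^{-1}h^{-1}=g^{2}(g^{-1}h)^{2}h^{-2}$ by a direct cancellation, noting that the three factors $g^{2}$, $(g^{-1}h)^{2}$ and $h^{-2}=(h^{-1})^{2}$ are each squares of elements of $G$. Consequently an arbitrary commutator $[g,h]$ is a product of three squares and therefore lies in $G^{2}$.

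Since the commutator subgroup $G'$ is by definition generated by the set of all commutators $\{[g,h]:g,h\in G\}$, and every such commutator already belongs to the subgroup $G^{2}$, the subgroup they generate is contained in $G^{2}$; that is, $G'\le G^{2}$. There is no genuine obstacle in this argument: the only points requiring a word of care are, first, that the conclusion rests on interpreting $G^{2}$ as the subgroup generated by squares rather than the bare set of squares, and second, the elementary algebraic check of the three-square identity, which is entirely routine.
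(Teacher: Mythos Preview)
Your argument is correct and mirrors the paper's approach: both proofs show that an arbitrary commutator is a product of three squares and conclude that $G'\le G^{2}$. The only cosmetic difference is the specific identity used---you invoke $ghg^{-1}h^{-1}=g^{2}(g^{-1}h)^{2}h^{-2}$ from the introduction, whereas the paper's proof of this lemma writes $[a,b]=a^{2}(a^{-1}ba)^{2}(a^{-1}b^{-1})^{2}$; either identity suffices and the logic is identical.
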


Indeed, an arbitrary commutator presented as product of squares. Let $a,\,\,b\in G$ and set that
$x=a,\,\,\,y={{a}^{-1}}ba,\,\,\,z={{a}^{-1}}{{b}^{-1}}$ then ${{x}^{2}}{{y}^{2}}{{z}^{2}}={{a}^{2}}{{({{a}^{-1}}ba)}^{2}}{{({{a}^{-1}}{{b}^{-1}})}^{2}}=ab{{a}^{-1}}{{b}^{-1}},\,\,$
in more detaile:
${{a}^{2}}{{({{a}^{-1}}ba)}^{2}}{{({{a}^{-1}}{{b}^{-1}})}^{2}}={{a}^{2}}{{a}^{-1}}ba\,{{a}^{-1}}ba\,\,{{a}^{-1}}{{b}^{-1}}{{a}^{-1}}{{b}^{-1}}=abb{{b}^{-1}}{{a}^{-1}}{{b}^{-1}}=\left[ a,b \right]$.
In such way we obtain all commutators and their products.
Thus, we generate by squares whole $G{{'}_{k}}$.



\begin{prop}
The following Mathieu's groups ${M}_{11}$, ${M}_{12}$, $M_{20}$, ${M}_{21}$ $M_{22}$, $M_{23}$ and $M_{24}$ have verbal width $(wid(G, S(G)))$ by square 2.
\end{prop}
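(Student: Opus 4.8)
The plan is to verify the two defining inequalities $wid(G,S(G)) \le 2$ and $wid(G,S(G)) \ge 2$, handling the question of generation uniformly and then treating the simple members of the list and the single non-simple group $M_{20}$ separately for the upper bound.

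First I would record that the squares generate in every one of the seven cases, i.e. $V_{S(G)}(G) = G$. Each of $M_{11}, M_{12}, M_{22}, M_{23}, M_{24}$ is simple and $M_{21} \cong PSL_3(F_4)$ is simple, so all six are perfect; and $M_{20} \cong (C_2)^4 \rtimes A_5$ is also perfect, since $A_5 \cong SL_2(F_4)$ acts on $(C_2)^4$ through its irreducible nontrivial natural module, whence $[A_5,(C_2)^4] = (C_2)^4$ and $M_{20}' = M_{20}$. For a perfect group Lemma \ref{G^2} gives $G = G' \le V_{S(G)}(G) \le G$, so $V_{S(G)}(G) = G$ throughout. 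In particular the verbal subgroup whose width we measure is the whole group $G$.

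For the six simple groups the upper bound $wid(G,S(G)) \le 2$ is then immediate from the theorem of Liebeck, O'Brien, Shalev and Tiep \cite{Liebeck}: every element of a finite simple group is a product of at most two squares. To see that the width is not $1$, equivalently that $S(G) \ne G$, it suffices to exhibit a single non-square element, for which the Frobenius--Schur count of square roots $r(g) = \sum_{\chi} \nu(\chi)\chi(g)$ is the natural tool: any class with $r(g) = 0$ (in practice a suitable class of involutions) does the job, and such a class is readily located on the GAP/CAP system as in the earlier parts of the paper. This gives $wid(G,S(G)) = 2$ for $M_{11}, M_{12}, M_{21}, M_{22}, M_{23}, M_{24}$.

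The main obstacle is $M_{20}$, where \cite{Liebeck} is unavailable because the group is not simple. Generation is already in hand from the previous paragraph, and the lower bound $wid \ge 2$ follows as before from a non-square element. The delicate point is the sharp upper bound $wid(M_{20},S(M_{20})) \le 2$: one must show that the contribution of the abelian normal $2$-group $(C_2)^4$ can be absorbed into two squares simultaneously with the $A_5$-part, whose own width is $2$ by Theorem \ref{Prod} applied to $A_5$. Rather than push the extension-theoretic bookkeeping by hand, I would close this case by the direct GAP verification already invoked in the preceding theorem, which confirms that every element of $M_{20}$ is a product of two squares. Collecting the simple cases with $M_{20}$ yields verbal width $2$ for all seven groups, as claimed.
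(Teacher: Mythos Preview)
Your argument is correct and is in fact cleaner than the paper's own proof. The paper handles this proposition almost entirely computationally: it invokes the exceptional isomorphisms $M_{21}\simeq PSL(3,4)$ and (erroneously) $M_{12}\simeq PSL(2,11)$, observes via Lemma~\ref{G^2} that the width is at most~$3$, and then reports that a GAP computation brings this down to~$2$ for $M_{21}$ and that ``for verifying the verbal width for the rest of groups algebraic system GAP was applied''. No use is made of \cite{Liebeck} at this point.

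Your route is genuinely different for the six simple groups: you obtain the upper bound $\le 2$ directly from the Liebeck--O'Brien--Shalev--Tiep theorem rather than from machine computation, and you supply a uniform conceptual reason (perfectness plus Lemma~\ref{G^2}) for why $V_{S(G)}(G)=G$, including a concrete module-theoretic verification that $M_{20}\cong (C_2)^4\rtimes A_5$ is perfect. The only place where you, like the paper, fall back on GAP is for the non-simple group $M_{20}$ (upper bound) and for exhibiting a non-square class (lower bound). What your approach buys is that the dependence on computer verification is isolated to the single exceptional case $M_{20}$ and to the easy task of locating one non-square conjugacy class; what the paper's approach buys is uniformity of method, at the cost of being essentially a report of computations.
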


\begin{proof}
Due to the exceptional  isomorphism ${{M}_{21}}\simeq PSL\left( 3,\text{ }4 \right)$  \cite{Doc}  we  can investigate prime group  $PSL\left( 3,\,4 \right)$. As well as the set  $S(PSL\left( 3,\,4 \right))$ generates whole group $PSL\left( 3,\,4 \right)$  the question  of verbal width remains relevant.  According to Lemma \ref{G^2} and previous Corollary it bounded by 3. The computation on GAP system give us answer $vw(M_{21})=2$.  The exceptional isomorphisms ${{M}_{12}}\simeq PSL\left( 2,\text{ 11} \right)$ and ${{M}_{21}}\simeq PSL\left( 3,\text{ }4 \right)$ gives us the possibility to investigate well known matrix group from $PSL\left( k,\text{ }n \right)$ and obtain result $vw(M_{12},S(M_{12}))=2$, $vw(M_{12},S(M_{21}))=2$ in analytical way. For verifying the verbal width for the rest of groups algebraic system GAP was applied.
\end{proof}


\section{Corollary }

If a matrix $A\in G{{L}_{2}}({{F}_{p}})$ is semisimple with different eigenvalues but with the same quadratic, then $\sqrt{A}\in G{{L}_{2}}({{F}_{p}})$ iff at least one an eigenvalue ${{\lambda }_{i}}$, $ i\in \{1,2\}$  of $A$ satisfies:

 \begin{equation*}
(\frac{{{\lambda }_{i}}}{p})=1
 \, \, in \, \, the \, \, algebraic \, \, extention \, \, of \, \, degree \, \, 2 \, \, that \, \, is \, \, {{F}_{p^2}}.
 \end{equation*}

Verbal width by $S(A_{n})$ of the group $ A_{n} $ equals to $2$ for $n>4$. Commutator width of $syl_2 A_{2^k}$ is 1 \cite{SkCommEur, SkAr}.


\end{document}